\begin{document}

\author{Gregory J. Morrow}

\institute{Gregory J. Morrow \at
University of Colorado, Colorado Springs CO 80918  \email gmorrow@uccs.edu}

\title{Laws relating runs, long runs, and steps in gambler's ruin, with persistence in two strata} 
\titlerunning{Persistent Gambler's Ruin}

\maketitle

\abstract{
Define a certain gambler's ruin process $\mathbf{X}_{j}, \mbox{ \ }j\ge 0,$ such that the increments $\varepsilon_{j}:=\mathbf{X}_{j}-\mathbf{X}_{j-1}$ take values $\pm1$ and satisfy $P(\varepsilon_{j+1}=1|\varepsilon_{j}=1, |\mathbf{X}_{j}|=k)=P(\varepsilon_{j+1}=-1|\varepsilon_{j}=-1,|\mathbf{X}_{j}|=k)=a_k$, all $j\ge 1$,  where $a_k=a$ if $ 0\le k\le f-1$, and $a_k=b$ if $f\le k<N$. Here  $0<a, b <1$ denote persistence parameters and $ f ,N\in \mathbb{N} $ with $f<N$. The process starts at $\mathbf{X}_0=m\in (-N,N)$ and terminates when $|\mathbf{X}_j|=N$. Denote by ${\cal R}'_N$, ${\cal U}'_N$, and ${\cal L}'_N$, respectively, the numbers of runs,  long runs, and steps in the meander portion of the gambler's ruin process.  Define $X_N:=\left ({\cal L}'_N-\frac{1-a-b}{(1-a)(1-b)}{\cal R}'_N-\frac{1}{(1-a)(1-b)}{\cal U}'_N\right )/N$ and let $f\sim\eta N$ for some $0<\eta <1$. We show $\lim_{N\to\infty} E\{e^{itX_N}\}=\hat{\varphi}(t)$ exists in an explicit form. We obtain a companion theorem for the last visit portion of the gambler's ruin.
\keywords{runs, generating function, excursion, gambler's ruin, last visit, meander, persistent random walk, generalized Fibonacci polynomial.}}
\section{Introduction}\label{S:intro}
Define a gambler's ruin process $\{\mathbf{X}_{j}, \mbox{ \ }j\ge 0\},$ with values in $\mathbb{Z}\cap[-N,N]$, such that the increments $\varepsilon_{j}:=\mathbf{X}_{j}-\mathbf{X}_{j-1}$ take values $\pm1$ and satisfy $P(\varepsilon_{j+1}=1|\varepsilon_{j}=1, |\mathbf{X}_{j}|=k)=P(\varepsilon_{j+1}=-1|\varepsilon_{j}=-1,|\mathbf{X}_{j}|=k)=a_k$, all $j\ge 1$,  where $a_k=a$ if $ 0\le k\le f-1$, and $a_k=b$ if $f\le k<N$. Here  $0<a, b <1$ denote persistence parameters and $ f ,N\in \mathbb{N} $ with $f<N$.
The process starts at some fixed level $m\in (-N,N)$ and terminates at an epoch $j$ when $|\mathbf{X}_j|=N$.  For initial probabilities, take $\pi_{+}=P(\varepsilon_{j}=1)=\pi_{-}=P(\varepsilon_{j}=-1)=\frac{1}{2}$.
We call the two ranges of values $|k|\le f-1$ and  $f\le |k|<N$ as strata for the two persistence parameter values $a$ and $b$, respectively. In gambling, $\mathbf{X}_j$ denotes a fortune after $j$ games on which the gambler makes unit bets.  If $a,b >\frac{1}{2}$, then any run of fortune tends to keep going in the same direction. Thus for example a win [loss] resulting in  fortune  $k$ for some  $|k|\le f-1$ is followed by another win [loss] with probability $a$, whereas a change in fortune occurs with probability $1-a$.  
Henceforth we shall simply refer to $\{\mathbf{X}_j=\mathbf{X}_j^{N}\}$ as the gambler's ruin process, with or without mention of the parameters $a,$ $b$, $f$, and $N$. Note that $\{\mathbf{X}_j\}$  is the classical fair gambler's ruin process in case $a=b=\frac{1}{2}$, with symmetric boundaries $N$ and $-N$. For the \emph{homogeneous} case $a=b$, the increments $\{\varepsilon_{j}, j\ge 0\},$ form a strictly stationary process with zero means, where the correlation between $\varepsilon_{j}$ and $\varepsilon_{j+1}$ is  $2a-1$. If $a=b$ and also $N=\infty$ then $\{\mathbf{X}_j^{\infty}\}$ becomes a symmetric persistent (or correlated) random walk on $\mathbb{Z}$  
that is recurrent by \cite{Sp1964}, Thm. 8.1. 
\par
Physical models of persistence often consider the velocity of a particle either staying the same or being changed according to a random collision process \cite{BaToTo2007, PoKi2016, SzTo1984}; in our model the velocity only takes values $\pm 1$. 
Our introduction of strata corresponds to a change in medium over which the persistence parameter, or likelihood of the velocity staying the same, would deterministically change. In \cite{SzTo1984}, the authors obtain a Wiener limit for the normalized sum of velocities under a random environment, that includes our deterministic model. Our aim is different since we want results for discrete statistics that have no analogue in the Wiener process. In this context our stratified model seems to be new.
\par
We define a nearest neighbor path of length $n$ in $\mathbb{Z}$ to be a sequence $\Gamma=\Gamma_0,\Gamma_1,\dots,\Gamma_n,$ where $\Gamma_j\in \mathbb{Z}$ and $\delta_j:=\Gamma_j-\Gamma_{j-1}$ satisfies $|\delta_j|=1$ for all $j=1,\dots,n$. We also call $n$ the number of steps of $\Gamma$. We connect successive lattice points $(j-1,\Gamma_{j-1})$ and $(j,\Gamma_{j})$ in the plane by straight line segments, and term this connected union of straight line segments the \emph{lattice path}. See Figures \ref{F:lastvisit}--\ref{F:futuremaxima1}.
We define the number of runs along $\Gamma$ as the number of inclines, either straight line ascents or descents, of maximal extent along the lattice path; the length of a run is the number of steps in such a maximal ascent or descent.  A  \emph{long run} is itself a run that  consists of at least two steps; in gambling terminology a long run means that the run of fortune does not immediately change direction.  A \emph{short run} is on the other hand  a run of length exactly one, so every run is either a long run or a short run.  In Figure \ref{F:futuremaxima1}, the lattice path shown has 15 runs, with 7 short runs and 8 long runs. An \emph{excursion} is a nearest neighbor path that starts and ends at $m=0$, $\Gamma_0=\Gamma_n=0$, but for which $\Gamma_j\neq 0$ for $1\le j\le n-1$. A positive excursion is an excursion whose graph lies above the $x$--axis save for its endpoints. For a positive excursion path, the number of runs is just twice the number of peaks, where a peak at lattice point $(j,\Gamma_j)$ corresponds to $\delta_{j}=1$ and $\delta_{j+1}=-1$. 
\par
The \emph{last visit} is defined as 
\begin{equation}\label{E:LN}
{\cal L}_N:= \max\{j\ge 0: j=0, \mbox{ or } \mathbf{X}_ j= 0 \mbox{ for some  } j\ge 1\}.
\end{equation}
The \emph{meander} is the portion of the process that extends from the epoch of the last visit ${\cal L}_N$ until the gambler's ruin process terminates.  So the meander process never returns to the level $m=0$. See Figure \ref{F:lastvisit}.
\begin{figure}
\begin{center}
\setlength{\unitlength}{0.0075in}%
\begin{picture}(300,140)(100,710)
\thinlines
\put(-11,775){0}
\put(-11,795){1}
\put(-11,815){2}
\put(-11,835){3}
\put(-11,855){4}
\put(-16,755){-1}
\put(-16,735){-2}
\put(-16,715){-3}
\put(-16,695){-4}
\put(00,780){\line(1,-1){20}}
\put(20,760){\line(1,-1){20}}
\put(40,740){\line(1,1){20}}
\put(60,760){\line(1,1){20}}
\put(80,780){\line(1,-1){20}}
\put(100,760){\line(1,1){20}}
\put(120,780){\line(1,1){20}}
\put(140,800){\line(1,1){20}}
\put(160,820){\line(1,1){20}}
\put(180,840){\line(1,-1){20}}
\put(200,820){\line(1,-1){20}}
\put(220,800){\line(1,-1){20}}
\put(240,780){\line(1,-1){20}}
\put(260,760){\line(1,-1){20}}
\put(280,740){\line(1,1){20}}
\put(300,760){\line(1,-1){20}}
\put(320,740){\line(1,1){20}}
\put(340,760){\line(1,1){20}}
\put(360,780){\line(1,1){20}}
\put(380,800){\line(1,1){20}}
\put(400,820){\line(1,1){20}}
\put(420,840){\line(1,-1){20}}
\put(440,820){\line(1,1){20}}
\put(460,840){\line(1,1){20}}
\put(355,760){${\cal L}_4$}
\put(405,795){Meander}
\put(365,785){\dots\dots\dots\dots\dots\dots}
\linethickness{0.050mm}
\put(00,860){\line(1,0){500}}
\put(00,700){\line(1,0){500}}
\thicklines
\put(00,780){\line(1,0){500}}
\put(00,700){\line(0,1){160}}
\end{picture}
\end{center}
\caption{Last Visit and Meander; $N=4$.} 
\label{F:lastvisit}
\end{figure}
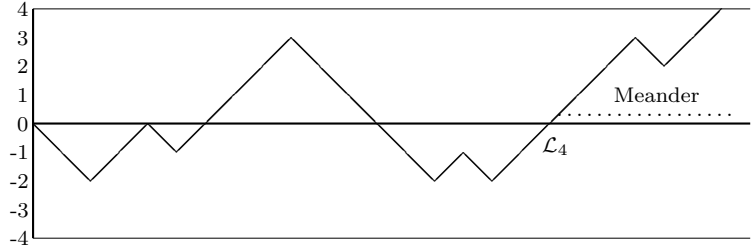 
It is shown by \cite{Mor2015} that, for $a=b=\frac{1}{2}$, if  ${\cal R}_N$ denotes the total number of runs over all excursions of the absolute value process $\{|\mathbf{X}_j|\}$ until the last visit, then, with the 
order $N$ scaling, it holds that  $({\cal L}_N-2{\cal R}_N)/N$ converges in law. Also, if ${\cal R}'_N$ and ${\cal L}'_N $ denote respectively the number of runs and steps over the meander portion of the process, then  $({\cal L}'_N-2{\cal R}'_N )/N$ converges in law to a density $\varphi(x) = ( \pi /4) \mbox{sech} ^2 (\pi x/2)$, $-\infty<x<\infty$, with characteristic function $\int_{-\infty}^{\infty}\varphi(x)e^{ixt} dx=t/\sinh(t)$. We first generalize this result  for the meander case. 
 Let ${\cal R}'_N$, ${\cal V}'_N$, ${\cal L}'_N$,  denote respectively the numbers of runs, short runs, and steps, in the meander portion of the gambler's ruin; for the lattice path of Figure \ref{F:lastvisit} we have ${\cal R}'_4=3,$ ${\cal V}'_4=1$, ${\cal L}'_4=6$. Define the following scaled random variable over the meander:
\begin{equation}\label{E:Xab} 
\begin{array}{c}
X_N:=\frac{1}{N}\left ({\cal L}'_N-\frac{2-a-b}{(1-a)(1-b)}{\cal R}'_N+\frac{1}{(1-a)(1-b)}{\cal V}'_N\right ).
\end{array}
\end{equation}
\begin{theorem}\label{T:T2}
 Let $f=\eta N$ for some fixed $0<\eta <1$. Denote $\kappa_1:=\frac{\eta \sigma_1}{1-b}$ and $\kappa_2:=\frac{(1-\eta)\sigma_2}{1-a}$, with $\sigma_1=\sqrt{a+b^2-2ab}$ and $\sigma_2=\sqrt{b+a^2-2ab}$. Let $X_N$ be defined by \eqref{E:Xab}. Then, $\lim\limits_{N\to\infty} E\{e^{itX_N}\}=\hat{\varphi}(t), \mbox{ where }$\\ $ (b\kappa_1\sigma_2+a\kappa_2\sigma_1)t/\hat{\varphi}(t):= $
\begin{equation}\label{E:T2}
a\sigma_1\cosh(\kappa_1 t)\sinh(\kappa_2 t)+b\sigma_2\sinh(\kappa_1 t)\cosh(\kappa_2 t)+i (b-a)^2\sinh(\kappa_1 t)\sinh(\kappa_2 t).
\end{equation}
\end{theorem}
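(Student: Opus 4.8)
The plan is to compute the characteristic function of $X_N$ via generating functions for nearest neighbor paths, tracking runs, short runs, and steps by formal variables. First I would set up the meander as a concatenation of two pieces: the portion of the meander inside the inner stratum $|k|\le f-1$ (governed by persistence $a$) and the portion inside the outer stratum $f\le |k|<N$ (governed by $b$), joined at the crossing level $k=f$. For a persistent walk, the natural bookkeeping is a transfer-matrix or continued-fraction style recursion on the level $k$: let $w=e^{it/N}$ weight a step, and weight a run by $u$ and a short run by $v$ (with a long run thereby weighted $u$ and carrying $w^{\ge 2}$), then extract coefficients so that the substitution matches the exponents $1,\ -\tfrac{2-a-b}{(1-a)(1-b)},\ +\tfrac{1}{(1-a)(1-b)}$ appearing in \eqref{E:Xab}. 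The generating function for a meander of a homogeneous persistent walk that rises (net) through a band of width $d$ factors as a product/quotient of the generalized Fibonacci polynomials alluded to in the keywords; I would write down that closed form for each stratum with its own parameter, then glue.

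Second, I would perform the asymptotic analysis. Writing $f=\eta N$, the inner band has width $\sim\eta N$ and the outer band width $\sim(1-\eta)N$. In the diffusive scaling $t\mapsto t/N$ the generalized Fibonacci polynomials of degree $\Theta(N)$ evaluated near their "hyperbolic" regime converge, after the right normalization, to $\cosh$ and $\sinh$ of a rescaled argument; the rescaling constants are exactly the $\kappa_1,\kappa_2$ in the statement, and the emergence of $\sigma_1=\sqrt{a+b^2-2ab}$, $\sigma_2=\sqrt{b+a^2-2ab}$ should come from the local expansion (a discriminant/variance term) of the characteristic polynomial of the stratum transfer matrix — these are precisely the standard-deviation-type quantities for one step of the persistent walk with the relevant drift induced by conditioning on the meander. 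The cross term $i(b-a)^2\sinh(\kappa_1 t)\sinh(\kappa_2 t)$ will arise from the mismatch at the interface $k=f$: the boundary condition for gluing the two Fibonacci-type solutions contributes a first-order (hence the single power of $i$, i.e. an odd/drift term) correction proportional to the square of the jump $b-a$ in the persistence parameter.

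Concretely, the key steps in order: (i) derive a recursion in $k$ for the joint generating function $G_k(u,v,w)$ of a persistent meander started just above level $0$ and absorbed at $\pm N$, decomposing each excursion-free upcrossing into runs; (ii) solve the recursion within each stratum in closed form via the generalized Fibonacci polynomials $F_n$, obtaining $G$ as a rational expression in $F_{\eta N}(\cdot)$ and $F_{(1-\eta)N}(\cdot)$ with coefficients depending on $a,b$ and on the interface matching; (iii) substitute $u,v,w$ by the exponentials dictated by \eqref{E:Xab} with $1/N$ in the exponent and normalize; (iv) Taylor-expand the per-level recursion coefficients to second order in $1/N$ to identify the continuum limit — each stratum's polynomial converges to a combination of $\cosh(\kappa_i t),\ \sinh(\kappa_i t)$; (v) assemble the limit and simplify to the form \eqref{E:T2}, checking the normalization $\hat\varphi(0)=1$ (which forces the prefactor $b\kappa_1\sigma_2+a\kappa_2\sigma_1$ by differentiating the right side at $t=0$) and checking the homogeneous reduction $a=b$ recovers $t/\sinh t$ after collapsing $\kappa_1+\kappa_2$.

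The main obstacle I anticipate is step (iv): getting the diffusive limit of the degree-$\Theta(N)$ generalized Fibonacci polynomials uniformly, with the correct second-order terms, so that the $\sigma_i$ and especially the interface cross term come out exactly. The subtlety is that the relevant evaluation point of $F_n$ sits at the transition between the oscillatory and hyperbolic regimes (eigenvalues of the transfer matrix coalescing), so a naive expansion of $\lambda^n$ is singular; one must expand $\log\lambda(t/N)$ to order $1/N^2$ and track the subleading eigenvector/normalization, and then handle the matching condition at $k=f$ where two such expansions with \emph{different} parameters meet — that is exactly where the $(b-a)^2$ term is born, and keeping it at the right order (neither dropping it as $o(1)$ nor overcounting) is the delicate point. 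A secondary difficulty is purely organizational: choosing the weighting of runs/short runs/steps so that the exponent combination in \eqref{E:Xab} is the one that produces a clean limit (this is presumably why that particular linear combination, rather than $\mathcal{L}'_N-2\mathcal{R}'_N$ alone, appears), and verifying the combinatorial identities relating runs to peaks for the absolute-value meander so the generating function genuinely counts what \eqref{E:Xab} claims.
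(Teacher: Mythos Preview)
Your proposal is correct and follows essentially the same route as the paper: the meander generating function is expressed (via a path decomposition yielding a Fibonacci-type recurrence in each stratum) as a ratio whose denominator $\overline{w}_{0,N}$ is built by gluing the two homogeneous solutions at $k=f$ through an explicit $2\times 2$ matrix, and the limit is extracted by the trigonometric substitution $\cos\theta_j=\beta/\sqrt{4x}$ with $\theta_j\sim i\sigma_j t/((1-\cdot)N)$, the exponents in \eqref{E:Xab} being precisely those that kill the first-order term so that the leading contribution is at order $1/N$ with the $(b-a)^2$ cross term arising from the interface matrix. The paper's execution differs from your sketch only in that the per-level recursion is obtained via a ``future maxima'' path decomposition rather than a transfer matrix per se, and the gluing is made completely explicit (Definition~\ref{D:wbar}, matrix $M=Q(b)^{-1}B$), which is what allows the order-$1$ cancellation in \eqref{E:w0Nformulafinal}--\eqref{E:w0Nformulafinal1} to be verified by direct computation; your anticipated ``main obstacle'' is exactly this cancellation.
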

In Theorem \ref{T:T2}, we obtain that $\varphi(x):=\frac{1}{2\pi}\int_{-\infty}^{\infty}e^{-itx}\hat{\varphi}(t)dt$ is real since the complex conjugate of $\varphi(x)$ is equal to itself; observe this by 
making a change of variables $t\to-t$ after conjugation of the integral. 
\par
We have a bivariate result for the homogeneous case as follows.  Define
\begin{equation}\label{E:Delta1Delta2}
\begin{array}{c}
Y_{1,N}:=\frac{1}{N}\left({\cal R}'_N-\frac{1}{(1-a)}{\cal V}'_N\right); \ \  
Y_{2,N}:=\frac{1}{N}\left({\cal L}'_N-\frac{1}{(1-a)}{\cal R}'_N\right)-Y_{1,N}.
\end{array}
\end{equation}
\begin{corollary}\label{C:homogeneouslimit}
Suppose $a=b$. Then the limiting joint characteristic function of the random variables $Y_{1,N}$ and $Y_{2,N}$ is: 
\begin{equation}\label{E:Jointchfcn}
\lim_{N\to\infty} E\{e^{isY_{1,N}+itY_{2,N}}\}=
\frac{\sqrt{(1-a)s^2+at^2}}{\sinh (\sqrt{(1-a)s^2+at^2})}. 
\end{equation}
\end{corollary}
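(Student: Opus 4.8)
The plan is to derive Corollary \ref{C:homogeneouslimit} as a specialization of Theorem \ref{T:T2} by setting $a=b$, but this requires care because the normalization in \eqref{E:T2} is singular at $a=b$: the coefficient $(b-a)^2$ of the imaginary term vanishes, while the denominator $b\kappa_1\sigma_2+a\kappa_2\sigma_1$ should remain nonzero. First I would substitute $a=b$, noting that then $\sigma_1=\sigma_2=\sqrt{a-a^2}=\sqrt{a(1-a)}=:\sigma$, and $\kappa_1=\eta\sigma/(1-a)$, $\kappa_2=(1-\eta)\sigma/(1-a)$, so that $\kappa_1+\kappa_2=\sigma/(1-a)$. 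The imaginary term drops out and \eqref{E:T2} collapses via the addition formula $\cosh u\sinh v+\sinh u\cosh v=\sinh(u+v)$ to give $a\sigma\sinh(\kappa_1 t+\kappa_2 t)=a\sigma\sinh(\sigma t/(1-a))$, while the left side becomes $a\sigma(\kappa_1+\kappa_2)t/\hat\varphi(t)=a\sigma\cdot\frac{\sigma}{1-a}t/\hat\varphi(t)$. Hence in the homogeneous case the one-dimensional limit reads $\hat\varphi(t)=\frac{\sigma t/(1-a)}{\sinh(\sigma t/(1-a))}$ with $\sigma/(1-a)=\sqrt{a/(1-a)}$; this matches the known answer $\frac{1}{2}\mathrm{sech}^2$-type density scaled appropriately, and in particular reduces to $t/\sinh t$ when $a=\frac12$.

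Next I would promote this to the bivariate statement. The random variable $X_N$ in \eqref{E:Xab} with $a=b$ equals $\frac1N\big({\cal L}'_N-\frac{2-2a}{(1-a)^2}{\cal R}'_N+\frac{1}{(1-a)^2}{\cal V}'_N\big)=\frac1N\big({\cal L}'_N-\frac{2}{1-a}{\cal R}'_N+\frac{1}{(1-a)^2}{\cal V}'_N\big)$, and one checks by direct algebra that $X_N=Y_{2,N}-\frac{a}{1-a}Y_{1,N}\cdot(\text{something})$ — more precisely I would verify that the linear combination appearing in $X_N$ is exactly reproduced by an appropriate combination of $Y_{1,N}$ and $Y_{2,N}$. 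Indeed $Y_{1,N}+Y_{2,N}=\frac1N({\cal L}'_N-\frac{1}{1-a}{\cal R}'_N)$ and $Y_{1,N}=\frac1N({\cal R}'_N-\frac1{1-a}{\cal V}'_N)$, so a short computation gives $Y_{2,N}-\frac{1}{1-a}Y_{1,N}=\frac1N\big({\cal L}'_N-\frac{2}{1-a}{\cal R}'_N+\frac{1}{(1-a)^2}{\cal V}'_N\big)=X_N\big|_{a=b}$. Thus $X_N=Y_{2,N}-\frac{1}{1-a}Y_{1,N}$, which corresponds to evaluating the joint characteristic function at $s=-t/(1-a)$. From the claimed joint answer, $E\{e^{-i(t/(1-a))Y_{1,N}+itY_{2,N}}\}\to\frac{\sqrt{(1-a)t^2/(1-a)^2+at^2}}{\sinh(\cdots)}=\frac{\sqrt{t^2/(1-a)+at^2}}{\sinh(\cdots)}=\frac{|t|\sqrt{(1+a(1-a))/(1-a)}}{\sinh(\cdots)}$ — I would reconcile the constant $\sqrt{1/(1-a)+a}$ with $\sigma/(1-a)=\sqrt{a/(1-a)}$ and, if there is a discrepancy, trace it to the precise definition of ${\cal U}'_N$ versus ${\cal V}'_N$ (the abstract uses ${\cal U}'_N$ for long runs, the body uses ${\cal V}'_N$ for short runs, and ${\cal R}'_N={\cal U}'_N+{\cal V}'_N$), which changes which linear form is being tracked. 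The consistency of the $s=-t/(1-a)$ slice with Theorem \ref{T:T2} is a necessary check but not a proof of the full bivariate law.

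To actually prove the joint convergence, I would go back to the generating-function machinery that underlies Theorem \ref{T:T2}. The meander decomposes (after conditioning on the last visit) into a concatenation of "up-steps" through the levels $1,\dots,N$ tracked by a transfer-matrix/generalized-Fibonacci-polynomial recursion in which each run contributes a marked factor; introducing two formal variables — one marking runs (equivalently $Y_{1,N}$ through the combination with short runs) and one marking steps beyond what runs already account for (equivalently $Y_{2,N}$) — yields a bivariate generating function whose singularity analysis, as $N\to\infty$ with $f=\eta N$, produces the joint limit. In the homogeneous case the recursion has constant coefficients, so the generating function is an explicit rational function of the two marking variables and a rescaled spectral parameter; extracting the dominant singularity and applying a bivariate version of the continuity theorem for characteristic functions gives \eqref{E:Jointchfcn}. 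The main obstacle will be setting up the two-parameter marking so that the resulting linear forms are exactly $Y_{1,N}$ and $Y_{2,N}$ as defined in \eqref{E:Delta1Delta2} — i.e., choosing the bookkeeping so the off-diagonal cross terms between "runs" and "excess steps" organize into the clean quadratic form $(1-a)s^2+at^2$ under the square root. Once the correct pair of statistics is identified, the rest is a routine (if lengthy) saddle-point/singularity computation of the kind already carried out for the univariate theorem, and the answer is forced by the $s=-t/(1-a)$ consistency check above together with the symmetry $s\leftrightarrow$ (the statistic with weight $a$) visible in the final formula.
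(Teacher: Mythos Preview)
Your proposal eventually lands on the right idea---rerun the generating-function asymptotics with two marking parameters---but the execution is missing, and the parts that are concrete contain an error.

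First, the algebra slip: when $a=b$, the correct identification is $X_N=-\tfrac{a}{1-a}Y_{1,N}+Y_{2,N}$, i.e.\ $s=-\tfrac{a t}{1-a}$, not $s=-\tfrac{t}{1-a}$. With the correct $s$ one gets $(1-a)s^2+at^2=\tfrac{a^2}{1-a}+a=\tfrac{a}{1-a}=(\sigma/(1-a))^2$, and your ``discrepancy'' evaporates; there is no ${\cal U}'_N$ vs.\ ${\cal V}'_N$ issue to chase. This also confirms, as you note, that specializing Theorem~\ref{T:T2} only produces one slice of the bivariate law and cannot yield the full corollary.

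The paper's proof is exactly your last-paragraph plan, but carried out far more simply than you anticipate. One substitutes $r_{s,t,N}=e^{i((1-a)s-(2-a)t)/((1-a)N)}$, $y_{s,t,N}=e^{i(t-s)/((1-a)N)}$, $z_{s,t,N}=e^{it/N}$ into $g_{0,N}(a,a)$ and into the single substitution $\cos\theta=\beta_a/\sqrt{4x_a}$. A one-line Taylor expansion gives $\cos\theta=1+\tfrac12\frac{(1-a)s^2+at^2}{N^2}+O(N^{-3})$, so the quadratic form drops out immediately---there is no delicate ``off-diagonal cross term'' bookkeeping to organize. From there, the homogeneous formula $w_N^*(a)=(\sin\theta)^{-1}(az\tau_a)^{N-1}\{(1-a)\sin N\theta+O(N^{-1})\}$ and the explicit $g_{0,N}$ from \eqref{E:ghomogeneous} give the limit directly. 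The only real content beyond the proof of Theorem~\ref{T:T2} is that second-order Taylor coefficient; everything else is a repeat of the univariate computation with $\theta$ reinterpreted.
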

\begin{remark}\label{R:Xzeta}
Let $a=b$, and define $X_{\zeta , N}:=\frac{1}{N}\left ({\cal L}'_N-\frac{1+\zeta}{(1-a)}{\cal R}'_N+\frac{\zeta}{(1-a)^2}{\cal V}'_N \right )$. Then by setting $s=(1-a-\zeta)t/(1-a)$ in \eqref{E:Jointchfcn} we obtain, for all $\zeta\in \mathbb{R}$,
$$ \lim_{N\to\infty} E\{e^{itX_{\zeta, N}}\}=\frac{A_{\zeta}t}{\sinh (A_\zeta t)}; \ \ A_\zeta:=\sqrt{[(2\zeta -1)a +(1-\zeta)^2]/(1-a)}.$$
\end{remark}
 \begin{example}\label{R:asymmetric-vhf}   As a special case of Theorem \ref{T:T2}, consider $b=1-a$ and $\eta=a$. Then $\kappa_i=\sigma_j=\sigma:=\sqrt{1-3a+3a^2}$, for all $i,j=1,2$. In this case we have 
\begin{equation}\label{E:phihat}
\hat{\varphi}(t)=\sigma^2 t/\left \{\sinh(\sigma t)[\sigma \cosh(\sigma t)+i (1-2a)^2\sinh(\sigma t)]\right\}.
\end{equation}
The complex factor of the denominator of $\hat{\varphi}(t)$ in \eqref{E:phihat} is equal to zero if and only if $e^{2\sigma t}=\frac{-\sigma+(1-2a)^{2}i}{\sigma+(1-2a)^{2}i}$. 
The smallest root is $t=\frac{i}{2\sigma}(\pi-\arctan \frac{2\sigma(1-2a)^2}{\sigma^2-(1-2a)^4} )$, with $\sigma^2-(1-2a)^4  =a(1-a)(5-16a+16a^2)>0$. Thus we can analytically continue $\hat{\varphi}(t)$ to a suitably chosen ball of positive radius $\epsilon_0$ about the origin such that $\sup_{|\xi|\le \epsilon_0}\|\hat{\varphi}(\cdot+i\xi)\|_2 <\infty$. It follows by \cite[Thm. IX.13]{ReSi1972}  that the inverse Fourier transform $\varphi(x)$ of $\hat{\varphi}(t)$ has exponential decay, meaning $e^{\epsilon |x|}\varphi(x)$ is square integrable for any $\epsilon<\epsilon_0$. 
However the probability density, $\varphi(x)$, is not symmetric in $x$ under \eqref{E:phihat} with $a\neq \frac{1}{2}$; see Figure \ref{F:density} at the end of the paper; see also \cite{Mor2018} for computational details.
\end{example}
\par
We now introduce the definitions of the excursion statistics to further describe our results. For the definitions in this paragraph we assume $\mathbf{X}_{0}=0$ and $N=\infty$.
Define the index $j$, or step, of first return of $\{\mathbf{X}_j\}$ to the origin by $\mathbf{L}:= \inf\{j\ge 1: \mathbf{X}_j=0\}$. Define the excursion sequence from the origin by $\mathbf{\Gamma}:=\{\mathbf{X}_j, j=0,\dots, \mathbf{L}\}$; again $\mathbf{L}$ is the number of steps of $\mathbf{\Gamma}$. Define the \emph{height} $\mathbf{H}$ of the excursion $\mathbf{\Gamma}$ as the maximum absolute value of the path over this excursion:   
\begin{equation}\label{E:height}
\mathbf{H}:=\max \{|\mathbf{X}_j|: j=1,\dots,\mathbf{L} \}. 
\end{equation} 
Also define $\mathbf{R}$ as the number of runs along $\mathbf{\Gamma}$, and further define $\mathbf{V}$ as the number of short runs along $\mathbf{\Gamma}$. Thus officially $\mathbf{U}:=\mathbf{R}-\mathbf{V}$ is the number of long runs along $\mathbf{\Gamma}$. In Figure \ref{F:lastvisit} there are 4 excursions until the last visit to the origin, with respective heights: $2,\ 1,\ 3, \ 2$. The numbers of runs in  the excursions of the absolute value process $\{|\mathbf{X}_j|\}$ until the last visit of Figure \ref{F:lastvisit}, wherein negative excursions are reflected into positive excursions, are: $2,\ 2,\ 2, \ 4$. The corresponding numbers of short runs in this last visit portion of the absolute value process are: $0,\ 2,\ 0, \ 2$. 
\par
The first motivation of the present paper is to show how the method of \cite{Mor2015} extends to the three statistics, runs, short runs, and steps, in the homogeneous setting ($a=b$). As a particular result we find the following Corollary \ref{C:RUL3waysymm}, which connects the present work with a certain combinatorial domain in the study of Dyck paths. Note that the generating function method which drives the present study depends heavily on a \emph{return to the level} 1 type recurrence approach that has been applied extensively in the field of lattice path combinatorics;   \cite{BaFl2002, BaNi2010, Bou-Me2008, Deu1999, Fl1980, FlSe2009, Kr2015}. 
Let $P_a$ denote the probability for the homogeneous model with persistence parameter $a$. We obtain the following symmetry for the joint distribution of the excursion statistics.
\begin{corollary}\label{C:RUL3waysymm}
Let $a=b$ and assume $\mathbf{X}_0=0$ and $N=\infty$. Then for all $n\ge 2$ there holds:
\begin{equation}\label{E:RUL3waysymm}
(1-a)P_a(\mathbf{L}=2n, \mathbf{R}=2k, \mathbf{U}=\ell)=aP_{1-a}(\mathbf{L}=2n, \mathbf{L}-\mathbf{R}=2k, \mathbf{U}=\ell).
\end{equation}
In particular if $a=\frac{1}{2}$, then $E\{e^{i r\mathbf{R}}e^{i s\mathbf{U}}e^{i t\mathbf{L}}\}-E\{e^{i r(\mathbf{L}-\mathbf{R})}e^{i s\mathbf{U}}e^{i t\mathbf{L}}\}=\frac{1}{2}e^{2it}(e^{2ir}-1).$
\end{corollary}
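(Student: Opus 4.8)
The plan is to exploit the generating-function machinery for excursion statistics that (per the discussion preceding the Corollary) underlies Theorem \ref{T:T2}. Concretely, I would introduce the trivariate excursion generating function
$\Phi_a(r,s,t):=\sum_{n\ge 1} P_a(\mathbf{L}=2n,\mathbf{R}=2k,\mathbf{U}=\ell)\, r^{2k}s^{\ell}t^{2n}$
(summing also over $k$ and $\ell$), built from the ``return to level $1$'' recurrence for the persistent walk. For a positive excursion the number of runs is twice the number of peaks, so tracking runs, long runs, and steps amounts to a weighted count of Dyck-type paths in which each up-step/down-step carries a factor determined by whether it continues a run (weight involving $a$) or reverses one (weight involving $1-a$), together with the initial factor $\tfrac12$ for the first increment. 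The first step is therefore to write down this recurrence explicitly and solve it, obtaining $\Phi_a$ as an algebraic function — this is precisely the generalized-Fibonacci-polynomial computation the paper advertises, specialized to $N=\infty$ and to the excursion (rather than meander) observable.

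The second step is the key algebraic identity. I would show that the substitution $a\leftrightarrow 1-a$ together with $\mathbf{R}\leftrightarrow \mathbf{L}-\mathbf{R}$ (i.e. the variable change that sends $r\mapsto$ a suitable reciprocal-type monomial, reflecting runs against non-runs while preserving $\mathbf{L}$) carries $\Phi_a$ to $\Phi_{1-a}$ up to the scalar factor $(1-a)/a$. The combinatorial heuristic is that in a persistent excursion, interchanging ``the run continues'' with ``the run reverses'' at every interior step swaps the roles of $a$ and $1-a$ and simultaneously swaps the count of runs with the count $\mathbf{L}-\mathbf{R}$; the asymmetry factor $(1-a)/a$ comes from the one step where this involution is not symmetric — the boundary behavior at the first return, where the walk is forced to step toward $0$. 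Since $\mathbf{U}=\mathbf{R}-\mathbf{V}$ is expressible via $\mathbf{R}$ and $\mathbf{L}$ alone once one notes that short runs are runs of length one, the variable $\mathbf{U}$ is left fixed by the involution, which is why it appears identically on both sides of \eqref{E:RUL3waysymm}. Matching coefficients of $r^{2k}s^{\ell}t^{2n}$ then yields the stated identity for each $n\ge 2$.

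For the final ``in particular'' claim, set $a=\tfrac12$, so $P_a=P_{1-a}$ and \eqref{E:RUL3waysymm} forces $P_{1/2}(\mathbf{L}=2n,\mathbf{R}=2k,\mathbf{U}=\ell)=P_{1/2}(\mathbf{L}=2n,\mathbf{L}-\mathbf{R}=2k,\mathbf{U}=\ell)$ for all $n\ge 2$. Consequently $E\{e^{ir\mathbf{R}}e^{is\mathbf{U}}e^{it\mathbf{L}}\}$ and $E\{e^{ir(\mathbf{L}-\mathbf{R})}e^{is\mathbf{U}}e^{it\mathbf{L}}\}$ agree on every term with $n\ge 2$, so their difference is supported on $n=1$, i.e. on $\mathbf{L}=2$. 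The length-$2$ excursion has $\mathbf{L}=2$, $\mathbf{R}=2$, $\mathbf{U}=0$ with probability $\tfrac12$ (the first step goes either way with probability $\tfrac12$, then must reverse, and at $a=\tfrac12$ reversing has probability $\tfrac12$; the two sign choices each contribute $\tfrac14$). For this excursion $\mathbf{L}-\mathbf{R}=0$, so the first expectation contributes $\tfrac12 e^{2ir}e^{2it}$ and the second contributes $\tfrac12 e^{0}e^{2it}$, giving the difference $\tfrac12 e^{2it}(e^{2ir}-1)$ as claimed.

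The main obstacle I anticipate is the precise bookkeeping in the second step: getting the involution on lattice paths exactly right at the endpoints of each excursion, so that it is a genuine bijection (not merely a bijection on interior steps) and so that the scalar $(1-a)/a$ — rather than some $n$- or $k$-dependent factor — is all that is lost. This is where one must be careful that ``runs'' are maximal inclines, so reflecting the step-continuation pattern genuinely sends $\mathbf{R}$ to $\mathbf{L}-\mathbf{R}$ and does not, say, merge or split runs at the path's apex in a way that depends on the height. Verifying that $\mathbf{U}$ is truly invariant (equivalently, that the set of length-one runs maps to the set of length-one runs under the involution) is the subtle point that makes the three-variable statement, and not just a two-variable runs/steps statement, go through.
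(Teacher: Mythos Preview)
Your plan reverses the paper's logical order and leans on an involution whose stated justification is incorrect. The paper first proves the $a=\tfrac12$ case by direct computation with the explicit excursion generating function, and only then lifts to general $a$ by a one-line reweighting. Specifically, from Corollary~\ref{C:K} one has the closed form $K=E\{r^{\mathbf{R}}y^{\mathbf{V}}z^{\mathbf{L}}\}=(1-\tfrac12\beta_a-\tfrac12\alpha_a)/(1-a)$; at $a=\tfrac12$ the paper evaluates $K(\tfrac12)[ru,1/u,z]$ (the $(\mathbf{R},\mathbf{U},\mathbf{L})$ generating function) and $K(\tfrac12)[u/r,1/u,rz]$ (the $(\mathbf{L}-\mathbf{R},\mathbf{U},\mathbf{L})$ one) and checks by direct algebra that their difference is exactly $\tfrac12 z^2(r^2-1)$. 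That already gives the ``in particular'' claim \emph{and} the counting identity: for $n\ge 2$ the number of excursions with $(\mathbf{L},\mathbf{R},\mathbf{U})=(2n,2k,\ell)$ equals the number with $(\mathbf{L},\mathbf{L}-\mathbf{R},\mathbf{U})=(2n,2k,\ell)$. The general-$a$ identity \eqref{E:RUL3waysymm} then follows immediately because $P_a(\Gamma)=\tfrac12\,a^{\mathbf{L}(\Gamma)-\mathbf{R}(\Gamma)}(1-a)^{\mathbf{R}(\Gamma)-1}$ depends only on $(\mathbf{L},\mathbf{R})$, so both sides of \eqref{E:RUL3waysymm} are the same monomial in $a,1-a$ times the respective path counts just shown equal. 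No bijection is ever constructed.

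The gap in your route is the involution step. First, the assertion that ``$\mathbf{U}=\mathbf{R}-\mathbf{V}$ is expressible via $\mathbf{R}$ and $\mathbf{L}$ alone'' is false: e.g.\ the length-$8$ excursions $UUUDUDDD$ and $UUDDUUDD$ both have $(\mathbf{L},\mathbf{R})=(8,4)$ but $\mathbf{U}=2$ and $\mathbf{U}=4$ respectively, so $\mathbf{U}$-invariance cannot be deduced this way. Second, the map ``flip continue/reverse at every interior step'' does not send excursions to excursions (applied to $UUDD$ it yields $UDDU$, which dips below $0$), so it is not a bijection on the relevant set at all. You could still try to verify the generating-function identity directly for general $a$ using Corollary~\ref{C:K}, but the paper's order---compute at $a=\tfrac12$, then reweight---is both simpler and avoids the need for any path-level involution.
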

The Corollary \ref{C:RUL3waysymm} extends the known result for the simple symmetric random walk that $P(\mathbf{L}
=2n,\mathbf{R}=2k)=P(\mathbf{L}=2n,\mathbf{L}-\mathbf{R}=2k)$, $n\ge 2$. Our proof depends on algebraic manipulation of the generating function; see Section \ref{S:Appl}.
\par
The second motivation is to extend the persistence model to the case of two distinct strata $a\neq b$.  
This \emph{full model}, together with its solution, has interesting features, which include: 
\begin{enumerate}
\item  its intrinsic value as physical model; cf. \cite{BaToTo2007, SzTo1984},  
\item completely explicit formulae throughout for key polynomials, identities, and generating functions; 
\item new limiting distributions for a scaling of order $N$ in both the meander and the last visit portions of the gambler's ruin.
\end{enumerate} 
We finally state a companion result to Theorem \ref{T:T2}, again for the full model, that gives a scaling limit of order $N$ over the last visit portion of the gambler's ruin. Let ${\cal R}_N$ and $ {\cal V}_N$ denote the total number of runs and short runs of the absolute value process $\{|\mathbf{X}_j|\}$ until the epoch of the last visit, ${\cal L}_N$, defined by \eqref{E:LN}. Define ${\cal M}_N$ as the number of consecutive excursions of height at most $N-1$ of the absolute value process $\{|\mathbf{X}_j|\}$ until ${\cal L}_N$.
In Figure~\ref{F:lastvisit}, we have ${\cal M}_4=4$, ${\cal R}_4=10,$ $ {\cal V}_4=4$, ${\cal L}_4=18.$
Define:
\begin{equation}\label{E:lastvisitXab} 
\begin{array}{c}
{\cal X}_N:=\frac{1}{N}\left ({\cal L}_N-\frac{2-a-b}{(1-a)(1-b)}{\cal R}_N+\frac{1}{(1-a)(1-b)}{\cal V}_N -\frac{a(b-a)}{(1-a)(1-b)}{\cal M}_N \right ).
\end{array}
\end{equation}
\begin{theorem}\label{T:T3}
Let $f \sim\eta N$, as $N\to \infty$, for some fixed $0<\eta <1$. Let ${\cal X}_N$ be defined by \eqref{E:lastvisitXab}. Let also $\kappa_j$, $j=1,2$, and $\sigma_j$, $j=1,2$, be as defined in Theorem \ref{T:T2}. Let $\hat{\varphi}$ be defined by \eqref{E:T2}. Then,  
$\lim\limits_{N\to\infty} E\{e^{it{\cal X}_N}\}=\hat{\psi}(t)/\hat{\varphi}(t),$ where \\ 
$\begin{array}{c}
(ab\sigma_1\sigma_2)/\hat{\psi}(t):= a b \sigma_1\sigma_2\cosh(\kappa_1 t)\cosh(\kappa_2 t)+a^2\sigma_1^2\sinh(\kappa_1 t)\sinh(\kappa_2 t) \\  +ia\sigma_1(b-a)^2\cosh(\kappa_1 t)\sinh(\kappa_2 t).
\end{array}$
\end{theorem}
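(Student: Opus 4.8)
\textbf{Proof proposal for Theorem \ref{T:T3}.}

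The plan is to mirror the generating-function strategy already used for Theorem \ref{T:T2}, but now applied to the entire last-visit portion rather than the single meander. First I would decompose the last-visit process $\{|\mathbf{X}_j| : 0\le j\le {\cal L}_N\}$ into a sequence of ${\cal M}_N$ i.i.d.\ excursions of the absolute-value process, each conditioned to have height at most $N-1$. Over each such excursion I would track the joint contribution to $({\cal L}_N, {\cal R}_N, {\cal V}_N)$, so that the relevant linear combination appearing in ${\cal X}_N$ — namely $\ell - \frac{2-a-b}{(1-a)(1-b)} r + \frac{1}{(1-a)(1-b)} v - \frac{a(b-a)}{(1-a)(1-b)}$ per excursion (the $-1$ count on ${\cal M}_N$ distributed one per excursion) — becomes a sum of i.i.d.\ increments indexed by a (geometric-type) random number ${\cal M}_N$ of terms. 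The joint characteristic function of the triple over one height-restricted excursion should be obtainable in closed form from the same generalized Fibonacci polynomials (evaluated at the stratum boundary $f$ and at $N$) that drive the meander computation; the key input is the ``return to level $1$'' recurrence referenced in the introduction, specialized to the two-stratum weights $a_k$.

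Next, because ${\cal L}_N$ is realized as a random geometric sum of these excursion contributions plus one final meander piece (the portion after the last visit, whose law is exactly what Theorem \ref{T:T2} already handles), I would write $E\{e^{it{\cal X}_N}\}$ as a product: the meander factor, which contributes $\hat{\varphi}(t)$ in the limit, times a factor coming from the geometric sum of height-restricted excursions. Writing $q_N$ for the probability that a fresh excursion has height $\le N-1$ (so ${\cal M}_N$ is geometric with success probability $1-q_N = \Theta(1/N)$) and $g_N(t)$ for the characteristic function of the normalized single-excursion increment, the geometric-sum factor is $(1-q_N)/(1 - q_N g_N(t))$ after the $1/N$ scaling is absorbed. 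The limit of this ratio as $N\to\infty$ is governed by the first-order behavior of $1 - g_N(t/N)$ and of $1-q_N$, both of order $1/N$, and the bookkeeping of the $f\sim\eta N$ split means the $\eta$ and $1-\eta$ portions enter through $\kappa_1,\kappa_2$ exactly as in Theorem \ref{T:T2}. Carrying out this expansion should produce $\hat{\psi}(t)/\hat{\varphi}(t)$ with $\hat\psi$ as stated: the $\hat\varphi$ in the denominator is precisely the meander factor, and the new numerator-type data (the $\cosh\kappa_1\cosh\kappa_2$, $\sinh\sinh$, and mixed $\cosh\sinh$ terms with the $a^2\sigma_1^2$ and $ia\sigma_1(b-a)^2$ coefficients) records the asymptotics of the height-restricted excursion generating function and the ${\cal M}_N$ correction term $-\frac{a(b-a)}{(1-a)(1-b)}$ chosen exactly to make the limit clean.

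The step I expect to be the main obstacle is the explicit asymptotic evaluation of the height-restricted single-excursion joint generating function in the two-stratum case: one must solve the level-recurrence separately on $0\le k\le f-1$ (weight $a$) and on $f\le k\le N-1$ (weight $b$), match at $k=f$, and then extract the $N\to\infty$ limit after the order-$N$ scaling of the run/step counting variable. This is where the generalized Fibonacci polynomials and their hyperbolic-function limits must be pushed through carefully, and where the precise coefficients $\frac{2-a-b}{(1-a)(1-b)}$, $\frac{1}{(1-a)(1-b)}$, and $\frac{a(b-a)}{(1-a)(1-b)}$ get pinned down so that the divergent parts cancel and a nondegenerate limit survives. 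Once that closed form is in hand, assembling the geometric sum and dividing out $\hat\varphi(t)$ is routine algebra; I would also double-check the result by specializing to $a=b$ (where ${\cal M}_N$'s coefficient vanishes) and comparing against Corollary \ref{C:homogeneouslimit} and Remark \ref{R:Xzeta}, and to $a=b=\tfrac12$ against the classical last-visit/meander decomposition.
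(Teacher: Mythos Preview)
Your geometric-sum scheme is exactly the paper's approach: write ${\cal M}_N$ as a geometric random variable with success probability $P(\mathbf{H}\ge N)\sim C_{a,b}/N$, express the last-visit statistics as an i.i.d.\ sum over ${\cal M}_N$ height-restricted excursions, and obtain
\[
E\{r^{{\cal R}_N}y^{{\cal V}_N}z^{{\cal L}_N}u^{{\cal M}_N}\}=\frac{P(\mathbf{H}\ge N)}{1-u\,P(\mathbf{H}<N)\,K_{N-1}[r,y,z]},
\]
then plug in the closed form of $K_{N-1}$ (Theorem~\ref{T:KN}) and carry out the asymptotics after the substitutions \eqref{E:charfcnX}.

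However, there is one genuine misconception that would derail your computation. The random variable ${\cal X}_N$ in \eqref{E:lastvisitXab} involves \emph{only} the last-visit statistics ${\cal L}_N,{\cal R}_N,{\cal V}_N,{\cal M}_N$; the meander is not part of it. So there is no ``meander factor'' multiplying the geometric-sum expression, and $\hat\varphi$ does \emph{not} enter as a separate multiplicative contribution. If you literally multiply in a meander factor $\hat\varphi(t)$ you will get the wrong answer. The appearance of $1/\hat\varphi(t)$ in the limit has a different explanation: by Theorem~\ref{T:KN} the denominator of $K_{N}$ is $\overline{w}_{1,N+1}$, and by Lemma~\ref{L:wbarID} this coincides with $\overline{w}_{N,0}$, which up to the index shift $f\mapsto f-1$ is the same object as the meander denominator $\overline{w}_{0,N}$ whose asymptotics \eqref{E:w0Nformulafinal1} produced $\hat\varphi$ in Theorem~\ref{T:T2}. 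So $\hat\varphi$ shows up because the excursion generating function and the meander generating function share (essentially) the same denominator polynomial, not because the meander is part of ${\cal X}_N$.

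One further point you should anticipate: after substituting $K_N$ the ratio becomes $P(\mathbf{H}\ge N{+}1)\,\overline{w}_{1,N+1}\big/\Delta_N$ with $\Delta_N:=\overline{w}_{1,N+1}-(1-a)u_Nr_N^2z_N^2\,\overline{q}_N$. Each of $\overline{w}_{1,N+1}/(\Lambda_1\Lambda_2)$ and $(1-a)u_Nr_N^2z_N^2\,\overline{q}_N/(\Lambda_1\Lambda_2)$ is of order $it/N$, and these leading terms cancel; the coefficient $u_N=e^{-ita(b-a)/[(1-a)(1-b)N]}$ attached to ${\cal M}_N$ is exactly what makes this cancellation go through. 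You must therefore compute the $O(t^2/N^2)$ term of $\Delta_N/(\Lambda_1\Lambda_2)$ (this is where $\hat\psi$ comes from), which requires carrying the expansions of $e_{q,j}$ analogous to \eqref{E:d1d2reduced} one order further than the meander calculation. Your description of ``first-order behavior of $1-g_N(t/N)$'' glosses over this cancellation; it is the technical heart of the argument.
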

Theorem \ref{T:T2}, Corollary  \ref{C:homogeneouslimit}, and  Theorem  \ref{T:T3}, are proved in Section \ref{S:T2}, naturally following  Section \ref{S:proofs} on building blocks for the proofs. 
\section{Elements of the proof.}\label{S:elements}
Recall the definitions of the excursion statistics in \eqref{E:height} and following.  
We define the conditional joint probability generating function of the excursion statistics for runs, short runs, and steps given the height is at most $N$ by
\begin{equation}\label{E:KN}
K_N(a,b):=E\{r^{\mathbf{R}}y^{\mathbf{V}}z^{\mathbf{L}}|\mathbf{X}_0=0,\ \mathbf{H}\le N\}.
\end{equation} 
To calculate \eqref{E:KN}, our proofs feature bivariate Fibonacci polynomials $\{q_n(x,\beta)\}$ and $\{w_n(x,\beta)\}$, defined as follows.
\begin{definition}\label{D:wdef}
Define sequences $q_n(x,\beta)$ and $w_n(x,\beta)$ generated by the following recurrence relations, valid for $n\ge 1$. 
\begin{equation}\label{E:easyrecurrence}
\begin{array}{l}
q_{n+1}=\beta q_n - x q_{n-1}, \ q_0=0, q_1=1; \\ \\  w_{n+1}=\beta w_n - x w_{n-1}, \ w_0=1, w_1=1.
\end{array}
\end{equation}
\end{definition}
Here, $\beta, x\in \mathbb{C}$. The polynomials $q_n(x,\beta)$ generalize the univariate Fibonacci polynomials $F_n(x)=q_n(x,1)$, \cite[p. 327]{FlSe2009}; also $w_n(x,1)=F_{n+1}(x)$.   In the case of steps alone in the classical fair gamblers ruin problem ($a=b=\frac{1}{2}$; $\beta=1$ and $x=\frac{1}{4}z^2$ in \eqref{E:easyrecurrence}), the $\{q_n=F_n(x)\}$ are classically 
\emph{numerator} polynomials, and the $\{w_n=F_{n+1}(x)\}$ are the \emph{denominator} polynomials for the excursion generating function of height less than $n$, namely $K_{n-1}(\frac{1}{2},\frac{1}{2})$ with $r=y=1$ in \eqref{E:KN}; \cite{deBrKnRi1972}; \cite[V.4.3]{FlSe2009}. Here numerator and denominator refer to the convergent of a continued fraction representation of $K_{\infty}$. 
See \cite{Bou-Me2008} for an interesting direction on excursions with different step sets besides the classical steps $\pm 1$.  
\par
We write an \emph{interlacing} property of any two term recurrence $v_{n+1}=\beta v_n-x v_{n-1},$  $n\ge 1$, with coefficients $\beta$ and $x$ independent of $n$: 
\begin{equation}\label{E:vreduction}
\begin{array}{c}
 v_{n+1}v_{n-1}-v_{n}^2=\beta^{-1}x^{n-1}(v_3v_0-v_2v_1), \ \beta\neq 0;
 \end{array}
 \end{equation}
see  \cite[ (2.7)--(2.8)] {Mor2015}. Note that when $v_0=0, \ v_1=1$, the polynomials $v_n=v_n(\beta,-x)$ are called the generalized Fibonacci polynomials in $\beta$ and $-x$, and by standard generating function techniques, the fundamental sequences \eqref{E:easyrecurrence} have closed formulae given as follows: \cite[(2.1) and (2.3)]{Sw1997}; or \cite[(2.11)--(2.12)]{Mor2015}. Define $\alpha:=\sqrt{\beta^2-4x}.$ Then, for all $n\ge 1,$ and with $q_0(x,\beta)=0$, 
\begin{equation}\label{E:qwclosed}
\begin{array}{c}
q_n(x,\beta)= \frac{2^{-n}}{\alpha}\left ((\beta+\alpha)^n-(\beta-\alpha)^n\right ); \\ \\ w_n(x,\beta)= q_n(x,\beta)-xq_{n-1}(x,\beta).
\end{array}
\end{equation} 
The formula for $w_n$ follows from that of $q_n$, for $n\ge 1$, since $q_1-xq_0=1=w_1,$ and $q_2-xq_1=\beta-x=w_2$.
\par
We need some additional notation to describe our method as follows. 
For any pair of integers $m,n\in(-N,N)$ with $m\neq n$ we define the following \emph{first passage} length for the process $\{\mathbf{X}_j\}$ that starts at  $\mathbf{X}_0=m$:
\begin{equation}\label{E:sigmaa}
\mathbf{L}_{m,n}:=\inf\{j\ge 1: \mathbf{X}_j=n \mbox{ or } |\mathbf{X}_j|=N\}.
\end{equation}
For any starting level $\mathbf{X}_0=m$, let $\mathbf{\Gamma}_{m,n}:=\{\mathbf{X}_{j}, j=0,\dots,\mathbf{L}_{m,n}\}$ denote the ordinary first passage path from level $m$ to either level  $n$ or to the boundary of the gambler's ruin process. For our key definition \eqref{E:g}, additional conditions are placed on the first passage path to make it \emph{one--sided}.  
\par
Denote by $\mathbf{R}_{m,n}$ the number of runs and by $\mathbf{V}_{m,n}$ the number of short runs, respectively, along $\mathbf{\Gamma}_{m,n}$, where $\mathbf{L}_{m,n}$ denotes the number of steps along this path. For $n>m$, define $g_{m,n}=g_{m,n}(a,b)$ as the following \emph{upward} conditional joint probability generating function for these counting statistics given two conditions on the path: (1) the path is a one--sided first passage path that starts at $m$ and stays at or above level $m$ until it reaches level $n$, and (2) the first two steps of this path are both in the positive direction. If still $n>m$ then we also define the analogous \emph{downward} conditional joint generating function $g_{n,m}$:
\begin{equation}\label{E:g}
\begin{array}{l}
g_{m,n}:=  E(r^{\mathbf{R}_{m,n}}y^{\mathbf{V}_{m,n}}z^{\mathbf{L}_{m,n}}|\varepsilon_1=\varepsilon_2, \mathbf{X}_0=m, \mathbf{X}_j \ge m, \mbox{ }j=0,\dots,\mathbf{L}_{m,n}). \\ 
g_{n,m}:= E(r^{\mathbf{R}_{n,m}}y^{\mathbf{V}_{n,m}}z^{\mathbf{L}_{n,m}}|\varepsilon_1=\varepsilon_2, \mathbf{X}_0=n, \mathbf{X}_j \le n, \mbox{ }j=0,\dots,\mathbf{L}_{n,m}).
\end{array}
\end{equation}
The condition that the first two steps be in the same direction in the definition \eqref{E:g} arises due to the inclusion of  the statistic  $\mathbf{V}_{m,n} $ in the analysis.  
The path in Figure \ref{F:futuremaxima1} is a downward, first passage path from level 5 to level 0.
\par
Let $n>m$. In the formulation of the recurrence for $g_{m,n}$, we must take account of the unconditional probability that a first passage from level $m$ to level $n$ remains at or above the starting level; we must also define the corresponding probability $\rho_{n,m}$, as follows.
\begin{equation}\label{E:rhogeneral}
\begin{array}{l}
\rho_{m,n}:=P(\mathbf{X}_j\ge m, j=0,\dots,\mathbf{L}_{m,n} | \mathbf{X}_0=m); \\ \rho_{n,m}:=P(\mathbf{X}_j\le n, j=0,\dots,\mathbf{L}_{n,m}| \mathbf{X}_0=n).
\end{array}
\end{equation}
For $a=b=\frac{1}{2}$,
the probability $\rho_{n,0}=\rho_{0,n}$ is determined by the classical solution of the probability of ruin started from fortune $n$ on the interval $[0,n+1]$. For $a=b$, $\rho_{m,n}$ depends only on $k=n-m$ and is determined by $\rho_{m,m+\ell}=\frac{1}{2}(\ell-(\ell-1)a)^{-1}$, \cite[(2.4)]{Moh1955}. 
\par
There are many calculations used to establish various formulae by the help of certain key definitions.
We reserve the phrase \emph{direct calculation} to mean that computer algebra (\emph{Mathematica}, \cite{Mathematica}) is used to help verify the results. A companion document \cite{Mor2018} to the present paper provides details of the verifications. In our approach, the complication of a second stratum is solved by finding the right formulae and then rendering a proof; we often utilize induction based on the proposed formulae. Our proofs may be termed elementary, since we use path decompositions to establish explicit formulae for the conditional generating functions $g_{m,n}$.
\par
Our method for the full model is to show that the appropriate denominators $\{\overline{w}_{m,n}\}$ of the conditional generating functions $g_{m,n}$, together with certain singly--indexed numerators  $\{\overline{q}_n\}$, give rise also to a nice representation of   \eqref{E:KN}; see Theorem \ref{T:KN}, in which our approach involves conditioning on the height $\mathbf{H}=n$ of an excursion. For the homogeneous case of Proposition \ref{P:KN}, the formula for \eqref{E:KN} follows in a standard way of dealing with a finite continued fraction. In the homogeneous case an alternative approach based on the format of  \cite{FlSe2009}, Proposition V.3, could probably be devised. Yet we need  a closed formula for the one--sided first passage generating function $g_{0,N}$ to handle the meander in the full model, and this leads us to take an approach via recurrences proper, not only for $g_{m,n}$ but for $\overline{w}_{m,n}$. Accordingly, by Propositions \ref{P:wformula} and \ref{P:gProp}, we obtain our main results with the help of trigonometric substitutions and direct calculations. 
\section{Proofs of the Building Blocks}\label{S:proofs}\subsection{Recurrence for $g_{m,n}$.}\label{S:gmn}
We first establish the general recurrence relations governing the upward and downward generating functions of \eqref{E:g}. The condition \emph{initial two steps the same} on the trajectory of the lattice path yields immediately that
\begin{equation}\label{E:ginitial}
g_{m,m+2}(a,b):=r z^2, \ g_{m+2,m}(a,b): =rz^2, \ m\ge 0.
\end{equation}
The path decomposition of \cite{Mor2015} handles runs and steps; here we extend that approach for short runs as well.  It is convenient to focus on $g_{n,0}$ with some $n\ge 3$; see the definition \eqref{E:g}. The Figure \ref{F:futuremaxima1} is an illustration of one lattice path counted by $g_{5,0}$. 
Let $U$ or $D$ stand for one step up or down, respectively, in a lattice path, and let $(UD)^{\ell} $ be shorthand for $UDUD\cdots$ with $\ell$ repetitions of the pattern $UD$ for some $\ell \ge 0$.
Since any downward lattice path from $n$ to $0$ must first reach the level $m=1$, 
we have an initial factor $g_{n,1}$ in a product formula for $g_{n,0}$. 
\par
Any section of a lattice path for  $g_{n,1}$, which must end in $DD$, is followed by a sequence of steps of the form $(UD)^{\ell}UU$, or by a \emph{terminal} sequence $(UD)^{\ell}D$. To handle transitions that do not start $UU$ or $DD$ we introduce: 
\begin{equation}\label{E:khomega}
\begin{array}{c}
\omega(a,b):= 1-(1-a)(1-b)r^2y^2z^2, \ k(a,b):=(a+b-ab)/\omega(a,b), \\ 
\tau(a,b):=1+(1-a)(1-b)r^2z^2y(1-y); \ h(a,b):=\tau(a,b)/\omega(a,b).
\end{array}
\end{equation}
Denote $\mathbf{1}=(1,1,1)$ and evaluation of any function $u(a,b)$ at $(r,y,z)$ by $u(a,b)[r,y,z]$. For brevity we may write $u_{a}$ in place of $u(a,a)$. By \eqref{E:khomega}, $k(a,b)[\mathbf{1}]=\tau(a,b)[\mathbf{1}]=1$. Thus $k(a,b)$ is a probability generating function; the term $h(a,b)/h(a,b)[\mathbf{1}]$ is as well. In our discussion of $g_{n,0}$, if $f\ge 3$, then $k_a=k(a,a)$ accounts for a generating factor for an \emph{upward preamble} $(UD)^{\ell}$ from level $m=1$, succeeding $DD$ and preceding $UU$; in this case $k_a=c \sum _{\ell=0}^{\infty} ((1-a)^2r^2y^2z^2)^{\ell}=c/\omega_a$, where $c=a(2-a)$. If instead $f=2$ is the change of stratum parameter, then we obtain $k(a,b)$ in place of $k_a$ due to the fact that now a change in direction at level $m=2$ occurs with probability $(1-b)$ while a change in direction at level $m=1$ occurs with probability $(1-a)$. 
To handle the dependence on $f$, we define
\begin{equation}\label{E:bracketab}
\begin{array}{c}
[a,b]_{m}^{+}:=\left \{ \begin{array}{l} (a,a), \mbox{ if } m\le f-2\\ (a,b),  \mbox{ if } m= f-1 \\ (b,b), \mbox{ if } m\ge f
\end{array}\right \};\ 
[a,b]_{n}^{-}:=\left \{ \begin{array}{l} (a,a), \mbox{ if } n\le f-1\\ (a,b),  \mbox{ if } n= f \\ (b,b), \mbox{ if } n\ge f+1\end{array}\right \}.
\end{array}
\end{equation}
Let us suppose that the continuation of the path after the first downward passage to level $m=1$ is not yet passing into a terminal sequence, so takes the form $(UD)^{k}UU\dots$. 
Starting thus from $UU$ the path makes an upward first passage to level $n$ again (or not), and the pattern ``up to level $n$ and down to level $1$" repeats for an indefinite number of times, $\ell \ge 0$.  To handle the probability associated with the turning of the path downward from a level it will no longer exceed in the future of the path, or in turning from the bottom level $m=1$ to upwards (in the return to level 1), we define the \emph{turning probability at altitude} $m$ by
\begin{equation}\label{E:gamma}
\begin{array}{c}
\gamma_m:=\left \{ \begin{array}{l} 1-a, \mbox{ if } m\le f-1 \\1-b, \mbox{ if } m\ge f \end{array}\right \}.
\end{array}
\end{equation}
By definition \eqref{E:rhogeneral}, it now follows that $g_{n,0}=c g_{n,1} \lambda_{1,n} \lambda_{1,n-1} \cdots \lambda_{1,3} zh[a,b]_1^{+},$ with $\lambda_{1,n}:=\sum_{\ell=0}^{\infty} (4\gamma_1\gamma_n \rho_{1,n}\rho_{n,1}k[a,b]_{1}^{+}k[a,b]_{n}^{-}g_{1,n}g_{n,1})^{\ell},$ or 
$$ \lambda_{1,n}=\frac{1}{1-4\gamma_1\gamma_n \rho_{1,n}\rho_{n,1}k[a,b]_{1}^{+}k[a,b]_{n}^{-}g_{1,n}g_{n,1}}.$$
Here the factor of 4 arises due to the fact that the stationary probabilities for first step up and down, namely $\pi_{+}=\frac{1}{2}$ and $\pi_{-}=\frac{1}{2}$, get replaced by $\gamma_1$ and $\gamma_n$ respectively in $\rho_{1,n}$ and $\rho_{n,1}$.  The factor $k[a,b]_{n}^{-}$ takes account of a \emph{downward  preamble} succeeding $UU$ and preceeding $DD$ from the maximum possible level $M_2\ge 3$ in the remainder of the downward lattice path. Here the successive maximum levels $n=M_1\ge M_2\ge \cdots\ge M_r$ over the whole future of the path, determined in turn from the points of each of its returns to level $m=1$ from the previous such maximum, are the \emph{future maxima} (cf. \cite{Mor2015}) of a downward path from level $n\ge 3$ to level $m=0$. See Figure \ref{F:futuremaxima1}, in which we have  $M_1=5$, and $M_2=4$, $M_3=3$; there is no second future maximum of level 4, for example, because there is no return to level 1 between the two peaks at level 4, but instead we see a downward preamble $(DU)^1$ at $M_2$.  By definition we have  $M_r\ge 3$, and the downward path goes into a \emph{terminal} sequence after a return to level 1 from $M_r$; the terminal sequence is of form $(UD)^{k}D$; see Figure \ref{F:futuremaxima1}.
Eventually, in the beginning, the path will never rise to level $n$ again; but to lower future maxima at levels  $3\le m\le n-1$; thus the product $\lambda_{1,n}\lambda_{1,n-1}\dots \lambda_{1,3}$.  The factor $zh[a,b]_{1}^{+}$ 
corresponds to the terminal sequence. 
Now replace $m=1$ by $m\ge 1$ for a final destination level $m-1$, to obtain the following downward recurrence relation for any $m<n-1$:
\begin{equation}\label{E:G1down}
\begin{array}{c}
g_{n,m-1}=c  z h[a,b]_{m}^{+}g_{n,m} \prod_{j=m+2}^{n} \lambda_{m,j}.
\end{array}
\end{equation}
for a normalization constant $c$ such that $g_{n,m-1}[\mathbf{1}]=1$. Here we officially define $\lambda_{m,j}=\lambda_{m,j}(a,b)$: 
\begin{equation}\label{E:lambdamj}
\lambda_{m,j}:=\frac{1}{1-4\gamma_m\gamma_j \rho_{m,j}\rho_{j,m}k[a,b]_{j}^{-}k[a,b]_{m}^{+}g_{m,j}g_{j,m}}, \ m+2\le j.
\end{equation} 
By symmetric arguments we also obtain the upward recurrence relation for any $m<n-1$: 
\begin{equation}\label{E:G1up}
\begin{array}{c}
g_{m,n+1}= 
c  z h[a,b]_{n}^{-}g_{m,n} \prod_{j=m}^{n-2} \lambda_{j,n},
\end{array}
\end{equation}
where $c$ denotes a generic normalization constant. 
Each factor $\lambda_{m,n}/\lambda_{m,n}[\mathbf{1}]$ defined by \eqref{E:lambdamj} is a probability generating function for a class of paths starting and ending at the same level $n$ (say), with probability of first step given by the turning probability (at $n$). Each path besides the empty path makes a positive number of consecutive down--up transitions of type ``a first passage downward transition to $m$ followed immediately by a first passage upward transition to $n$". See Figure \ref{F:umj}, where a path starts at level $n=3$ at the first marker $\gamma_n$, and makes exactly $2$ down--up transitions between $n=3$ and $m=0$, and ends at the third marker $\gamma_n$. We obtain the same generating function if the paths instead start and end at $m$, with up--down transitions.
\par
By \eqref{E:G1down}--\eqref{E:G1up} we retrieve a closed recurrence for $g_{m,n}$.  Indeed, by \eqref{E:G1up}, for $m<n-2$, 
we simply have $g_{m,n+1}/g_{m+1,n+1}=c_1g_{m,n}\lambda_{m,n}/g_{m+1,n},$ for a normalization constant $c_1$.
Hence,
\begin{equation}\label{E:G2up} 
g_{m,n+1}
=c_1g_{m,n}g_{m+1,n+1}(g_{m+1,n})^{-1}\lambda_{m,n}, \  \ n-m\ge 3.
\end{equation}
Similarly, by applying \eqref{E:G1down}, $g_{n,m-1}/g_{n-1,m-1}=c_2 g_{n,m}\lambda_{m,n}/g_{n-1,m},$
for a normalization constant $c_2$ and $m<n-2$. Hence,
\begin{equation}\label{E:G2down} 
g_{n,m-1}
=c_2g_{n,m}g_{n-1,m-1}(g_{n-1,m})^{-1}\lambda_{m,n}, \ \ n-m\ge 3.
\end{equation}
Observe that the factor $\lambda_{m,n}$ of \eqref{E:lambdamj}, with $m+2<n$, appears exactly the same in both \eqref{E:G1down} and \eqref{E:G1up}, and again in \eqref{E:G2up}--\eqref{E:G2down}. 
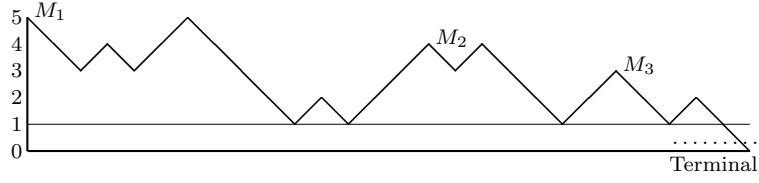
\begin{figure}
\begin{center}
\setlength{\unitlength}{0.0070in}
\begin{picture}(160,100)(190,780)
\thinlines
\put(00,880){\line(1,-1){20}}
\put(-12,775){0}
\put(-12,795){1}
\put(-12,815){2}
\put(-12,835){3}
\put(-12,855){4}
\put(-12,875){5}
\put(05,880){$M_1$}
\put(20,860){\line(1,-1){20}}
\put(40,840){\line(1,1){20}}
\put(60,860){\line(1,-1){20}}
\put(80,840){\line(1,1){20}}
\put(100,860){\line(1,1){20}}
\put(120,880){\line(1,-1){20}}
\put(140,860){\line(1,-1){20}}
\put(160,840){\line(1,-1){20}}
\put(180,820){\line(1,-1){20}}
\put(200,800){\line(1,1){20}}
\put(220,820){\line(1,-1){20}}
\put(240,800){\line(1,1){20}}
\put(260,820){\line(1,1){20}}
\put(280,840){\line(1,1){20}}
\put(300,860){\line(1,-1){20}}
\put(305,860){$M_2$}
\put(320,840){\line(1,1){20}}
\put(340,860){\line(1,-1){20}}
\put(360,840){\line(1,-1){20}}
\put(380,820){\line(1,-1){20}}
\put(400,800){\line(1,1){20}}
\put(445,840){$M_3$}
\put(482,785){\dots\dots\dots}
\put(480,765){Terminal}
\put(420,820){\line(1,1){20}}
\put(440,840){\line(1,-1){20}}
\put(460,820){\line(1,-1){20}}
\put(480,800){\line(1,1){20}}
\put(500,820){\line(1,-1){20}}
\put(520,800){\line(1,-1){20}}
\linethickness{0.050mm}
\put(00,800){\line(1,0){540}}
\thicklines
\put(00,780){\line(1,0){540}}
\put(00,780){\line(0,1){100}}
\end{picture}
\end{center}
\caption{Downward Transition with Future Maxima $M_1=5$, $M_2=4,$ $M_3=3$.} 
\label{F:futuremaxima1}
\end{figure} 
\par
We introduce some notation for the basic method to calculate \eqref{E:KN}, which consists of conditioning on  $\{\mathbf{H}=n\}$. In the remainder of this section we assume $f\ge 3$.
Let $G_{n}$ denote the conditional joint probability generating function of the number of runs, short runs, and steps in an excursion given that the height is $\mathbf{H}=n$ for some $1\le n<N$: 
\begin{equation}\label{E:G}
G_{n}:=E(r^{\mathbf{R}}y^{\mathbf{V}}z^{\mathbf{L}}|\mathbf{H}=n, \mathbf{X}_0=0), \mbox{ \ }n\ge 1.
\end{equation}
In definition \eqref{E:G}, the condition is that after the first step from $m=0$, the path does not return to the $x$-axis until it terminates, but that also, for a positive excursion, the path reaches the specified height, $n$, as a maximum. Now we work with positive excursions. We consider an initial sequence $U(UD)^{\ell}UU$ that brings a lattice path for the first time to level $m=3$ while never returning to level $m=0$. 
The joint generating function for the numbers of runs, short runs, and steps, for only the part $U(UD)^{\ell}$  of this initial sequence is simply $J_a:=a(2-a) zh_a, $
with $h_a$ defined by \eqref{E:khomega}. Now, to make a positive excursion that starts at level $m=0$ and reaches a level $n\ge 3$ for a first time, we also consider any upward path $\Gamma_{1,n}^{+}$  for $g_{1,n}$ that starts at level $m=1$ with $UU$. 
We link the initial sequence $U(UD)^{\ell}UU$ and $\Gamma_{1,n}^{+}$ together by making them overlap on the end $UU$ of the initial sequence and beginning of  $\Gamma_{1,n}^{+}$. Thus the factor of $G_n$ corresponding to a lattice path first reaching level $n\ge 3$ is given by $J_ag_{1,n}$. The remaining factor corresponds to a downward preamble from level $n$ followed by a downward path from level $n$ to level 0.
Hence, 
\begin{equation}\label{E:G3}
G_{n}= a(2-a) z h_{a} g_{1,n}k[a,b]_{n}^{-}g_{n,0}, \ n\ge 3, \ f\ge 3.
\end{equation}
Moreover, by symmetry we have $g_{0,-n}=g_{0,n}$ for all $n\ge 2.$ Hence, the joint generating function of the meander statistics is:
\begin{equation}\label{E:meandergf}
\begin{array}{c}
E\{r^{{\cal R}'_N}y^{{\cal V}'_N}z^{{\cal L}'_N}\}=a(2-a)zh_{a}g_{1,N}, \ \ f\ge 3.
\end{array}
\end{equation}
\subsection{Formula for $\rho_{m,n}$.}\label{S:rho}
In this section we establish a formula for $\rho_{m,n}$ as defined by \eqref{E:rhogeneral}. Note that $1-\rho_{1,N}$ is the probability of ruin  for the gambler's ruin persistence model with two strata on $[0,N]$ in case $\mathbf{X}_0=1$.  The novelty of our approach, based on induction, is unnecessary if $a=b$, since by \cite{Moh1955} a difference equation will solve the probability of ruin in this case.  
\par
The method we use to establish a formula is based first on the future maxima construction of Section \ref{S:gmn}, only  in \eqref{E:rhogeneral} there is no condition on  upward or downward paths starting $UU$ or $DD$. 
In place of $\lambda_{m,j}$ of \eqref{E:lambdamj}, here define:
\begin{equation}\label{E:umj}
u_{m,j}:=\frac{1}{1-4\gamma_m\gamma_j \rho_{m,j}\rho_{j,m}}, \ m+1\le j.
\end{equation}
Let $m<n$. By the way we developed the formulae \eqref{E:G1down}--\eqref{E:G1up}, we have 
\begin{equation}\label{E:rhoupdown}
\begin{array}{l}
(i) \  \rho_{m,n+1}= (1-\gamma_n)\rho_{m,n} \prod\limits_{j=m}^{n-1}u_{j,n}; \ \
(ii) \ \rho_{n,m-1}= (1-\gamma_m)\rho_{n,m} \prod\limits_{j=m+1}^{n}u_{m,j}.
\end{array}
\end{equation}
The factor $(1-\gamma_n)$ in  \eqref{E:rhoupdown}(\emph{i}) gives the probability ($a$ or $b$) of the last step in any one--sided first passage path from level $m$ to level $n$; a similar comment applies to  \eqref{E:rhoupdown}(\emph{ii}). See Figure \ref{F:umj}. By the same method as shown to obtain \eqref{E:G2up}--\eqref{E:G2down}, we have by \eqref{E:umj}--\eqref{E:rhoupdown} that
\begin{equation}\label{E:rhoupdown1}
(i) \ \rho_{m,n+1}
=\frac{\rho_{m,n}\rho_{m+1,n+1}}{\rho_{m+1,n}}u_{m,n}; 
\   (ii) \ \rho_{n,m-1}
=\frac{\rho_{n,m}\rho_{n-1,m-1}}{\rho_{n-1,m}}u_{m,n}.
\end{equation}
\begin{figure}
\begin{center}
\setlength{\unitlength}{0.0070in}
\begin{picture}(160,80)(180,770)
\thinlines
\put(-12,775){0}
\put(-12,795){1}
\put(-12,815){2}
\put(-12,835){3}
\put(-12,855){4}
\put(00,780){\line(1,1){20}}
\put(20,800){\line(1,-1){20}}
\put(40,780){\line(1,1){20}}
\put(60,800){\line(1,1){20}}
\put(80,820){\line(1,1){20}}
\put(100,840){\line(1,-1){20}}
\put(105,845){$\gamma_{n}$}
\put(120,820){\line(1,-1){20}}
\put(140,800){\line(1,1){20}}
\put(160,820){\line(1,1){20}}
\put(180,840){\line(1,-1){20}}
\put(200,820){\line(1,-1){20}}
\put(220,800){\line(1,-1){20}}
\put(245,775){$\gamma_{0}$}
\put(240,780){\line(1,1){20}}
\put(260,800){\line(1,-1){20}}
\put(280,780){\line(1,1){20}}
\put(300,800){\line(1,1){20}}
\put(320,820){\line(1,1){20}}
\put(345,845){$\gamma_{n}$}
\put(340,840){\line(1,-1){20}}
\put(360,820){\line(1,-1){20}}
\put(380,800){\line(1,-1){20}}
\put(400,780){\line(1,1){20}}
\put(405,775){$\gamma_0$}
\put(420,800){\line(1,1){20}}
\put(440,820){\line(1,1){20}}
\put(460,840){\line(1,-1){20}}
\put(465,845){$\gamma_{n}$}
\put(480,820){\line(1,1){20}}
\put(485,815){$\gamma_{2}$}
\put(500,840){\line(1,1){20}}
\put(515,843){$1-\gamma_{n}$}
\linethickness{0.050mm}
\put(00,840){\line(1,0){520}}
\thicklines
\put(00,780){\line(0,1){80}}
\put(00,860){\line(1,0){520}}
\end{picture}
\end{center}
\caption{Illustration of $\rho_{0,n+1}=(1-\gamma_n)\rho_{0,n}\prod\limits_{j=0}^{n-1}u_{j,n}$ for  $n=3$. For the path shown, $u_{0,n}$ has $2$ down--up transitions and $u_{1,n}$ has none, while $u_{2,n}$ has $1$ down--up transition.}
\label{F:umj}
\end{figure}
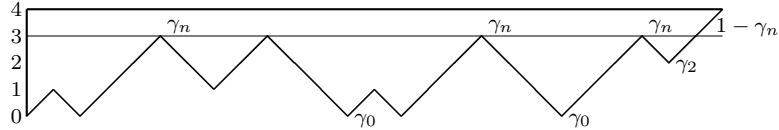 
With the help of \eqref{E:rhoupdown1}, we will now develop a closed recurrence for $\rho_{m,n}$. We first make a definition to establish a convenient form of $\rho_{m,n} $.
\begin{definition}\label{D:Pi}
Let $\rho_{m.n}$ be defined by \eqref{E:rhogeneral}. 
We define a denominator term $\Pi_{m,n}$ for $ \rho_{m,n}$ as follows; $m<n$ in all cases:\\ \\
(I). $\begin{array}{c}  \ \ (1) \ \ \rho_{m,n} =\frac{1}{2}\frac{b/a}{\Pi_{m,n}}, \  m\le f-1; \ \
(2) \ \  \rho_{m,n} =\frac{1}{2}\frac{1}{\Pi_{m,n}}, \    f\le m.
\end{array}$\\ 
\\
(II). $\begin{array}{c} 
 \ \ (1) \ \  \rho_{n,m} =\frac{1}{2}\frac{b/a}{\Pi_{n,m}}, \  n\le f-1; \ \ 
(2) \ \  \rho_{n,m} =\frac{1}{2}\frac{1}{\Pi_{n,m}},\   f\le n.
\end{array}$
\end{definition}
\begin{proposition}\label{P:Pi}
The terms $\Pi_{m,n}$ determined by Definition \ref{D:Pi} satisfy:\\ 
I. Between strata formulae:
\begin{enumerate}
\item $\mbox{ \  } \Pi_{f-\ell,f+j}=j+\ell(\frac{b}{a})-(\ell+j-1)b$,  $\mbox{ \  } \ell\ge 1$, $j\ge 0$; \\
\item $\mbox{ \  } \Pi_{f+j,f-\ell}=(j+1)+(\ell-1)(\frac{b}{a})-(\ell+j-1)b$,  $\mbox{ \  } \ell\ge 1$, $j\ge 0$.
\end{enumerate}
II. Within stratum formulae:
\begin{enumerate}
\item $  \mbox{ \ }\Pi_{m,m+\ell}=\Pi_{m+\ell,m} = \frac{b}{a}\{\ell-(\ell-1)a\}$, $\mbox{ \  } m<m+\ell \le f-1$;  \\
\item $ \mbox{ \  }\Pi_{m,m+j}=\Pi_{m+j,m}=j-(j-1)b$, $\mbox{ \  } f\le m<m+j$.
\end{enumerate}
\end{proposition}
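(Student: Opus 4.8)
The plan is to prove Proposition~\ref{P:Pi} by induction, exploiting the closed recurrences \eqref{E:rhoupdown1}(i)--(ii) together with the base cases that follow from \eqref{E:rhoupdown}. First I would record the needed initial data: from \eqref{E:rhoupdown}(i)--(ii) with the shortest possible paths one gets $\rho_{m,m+1}$ and $\rho_{m+1,m}$ directly (a first passage from $m$ to $m+1$ that stays $\ge m$ is forced after the first step, so $\rho_{m,m+1}=\tfrac12$ up to the normalization that distinguishes the two strata through $\gamma$), and then $\rho_{m,m+2}$, $\rho_{m+2,m}$ via one application of \eqref{E:rhoupdown1}. Translating through Definition~\ref{D:Pi}, these give the $\ell=1$, $j=0,1$ cases of I(1), the $j=0$, $\ell=1,2$ cases of I(2), and the $\ell=1,2$ (resp. $j=1,2$) cases of II(1) (resp. II(2)). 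The key observation that makes the bookkeeping tractable is that $u_{m,j}$ of \eqref{E:umj} depends on $\rho_{m,j}$ and $\rho_{j,m}$ only through the product $\rho_{m,j}\rho_{j,m}$, and $\gamma_m\gamma_j$ is a pure function of which strata $m$ and $j$ lie in; so $u_{m,j}$ is itself expressible rationally in the relevant $\Pi$-terms, and \eqref{E:rhoupdown1} becomes a clean three-term-type recurrence on the $\Pi$'s.

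The main induction would be on the ``width'' $n-m$ of the pair, proving all four formulae simultaneously. For the within-stratum formula II(2) (both indices in the $b$-stratum), substituting the conjectured $\Pi_{m,m+j}=j-(j-1)b$ into \eqref{E:rhoupdown1}(i) reduces, after clearing denominators, to the algebraic identity that $u_{m,m+j-1}$ computed from $\Pi_{m,m+j-1}=(j-1)-(j-2)b$ equals the ratio dictated by the recurrence; this is a \emph{direct calculation} in the sense of the paper (and is the homogeneous-type case already essentially in \cite{Moh1955}). II(1) is handled the same way with the extra global factor $b/a$, which cancels consistently because every $\rho$ in that regime carries it. The between-strata formulae I(1)--I(2) are the genuinely new content: here one uses \eqref{E:rhoupdown1}(i) to step $\Pi_{f-\ell,f+j}\to\Pi_{f-\ell,f+j+1}$ (increasing $j$, both moves staying in the upper stratum so $\gamma_n=1-b$), anchored at $j=0$ by the pair $(f-\ell,f)$ which straddles the boundary and must be obtained from the within-stratum value $\Pi_{f-\ell,f-1}$ by one boundary step where $\gamma_{f-1}=1-a$ switches to $\gamma_f=1-b$; symmetrically \eqref{E:rhoupdown1}(ii) steps $\ell$. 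One then checks the two recursions are consistent on the overlap and that the closed form $j+\ell(b/a)-(\ell+j-1)b$ (resp. its I(2) analogue) satisfies both, again by direct calculation.

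I would organize the write-up so that the only nontrivial verifications are a handful of rational-function identities: for each of the four formulae, show the proposed $\Pi$ satisfies the appropriate instance of \eqref{E:rhoupdown1} given that the neighboring (smaller-width) instances already hold. The subtle points, and where I expect the real work to be, are (a) getting the normalization constants in Definition~\ref{D:Pi} to be globally consistent across the stratum boundary — in particular verifying that the factor $b/a$ appears in exactly the cases claimed and propagates correctly when a recurrence step crosses from $m\le f-1$ to $m=f$; and (b) handling the short-width base cases, since \eqref{E:rhoupdown1} requires width $\ge 1$ on both sides and the very shortest paths (widths $1$ and $2$) must be computed by hand from \eqref{E:rhoupdown} before the induction can turn over — and some of these, e.g. $\Pi_{f-1,f}$ and $\Pi_{f,f-1}$, sit exactly on the boundary where the $a$/$b$ switch happens. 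Once those are pinned down, the inductive step is routine algebra, and I would simply cite it as a direct calculation with details in \cite{Mor2018}.
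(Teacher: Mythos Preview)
Your proposal is correct and follows essentially the same route as the paper: induction on the width $n-m$, with the within-stratum formulae handled via the homogeneous result of \cite{Moh1955}, base cases $n-m=1,2$ done by hand, and the between-strata induction step driven by \eqref{E:rhoupdown1} together with the expression of $u_{m,n}$ in terms of $\Pi$'s. The paper makes explicit two ingredients you leave as ``routine algebra'': the factorization identity $\Pi_{m,n}\Pi_{n,m}-\gamma_m\gamma_n(b/a)=\Pi_{m,n+1}\Pi_{m+1,n}$ (its \eqref{E:PimnPinmID}) and the coincidence $\Pi_{m+1,n+1}=\Pi_{n,m}$ read off from the proposed formulae, which together force the three-fold cancellation in \eqref{E:rhoupdown1}; it also isolates the boundary cases $\Pi_{f-k-1,f}$ and $\Pi_{f+k,f-1}$ exactly as you anticipate in your point (b).
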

\begin{remark}\label{R:rhohomogeneous} If $a=b$, we have $\Pi_{m,m+\ell}=\Pi_{m+\ell,m}=\ell-(\ell-1)a$ in all cases of Proposition \ref{P:Pi}, consistent with Definition \ref{D:Pi}  and  \cite[(2.4)]{Moh1955}.
\end{remark}
\begin{proof}[Proposition \ref{P:Pi}]  By the Remark \ref{R:rhohomogeneous}, and Definition \ref{D:Pi}, the within stratum formulae \emph{II.1--2} hold in general.  The proof of the between strata cases proceeds by induction on $n-m$, where we assume $n>m$ throughout. Recall that the first step  $\varepsilon_1$ of the gambler's ruin process is determined by $\pi_{+}=P(\varepsilon_{j}=1)=\frac{1}{2}$. We have $\rho_{m,m+1}=\rho_{m,m-1}=\frac{1}{2}$, so the case $n-m=1$ is easily checked. We next verify the cases $n-m=2$ for $\Pi_{m,n}$  and $\Pi_{n,m}$ in \emph{I.1--2}. We apply \eqref{E:umj}--\eqref{E:rhoupdown} with $u_{m,m+1}=(1-\gamma_{m}\gamma_{m+1})^{-1}$. Thus for all $m$, $\rho_{m,m+2}=\frac{1}{2}(1-\gamma_{m+1})/(1-\gamma_{m}\gamma_{m+1})$. In particular, by \eqref{E:gamma}, $\rho_{f-1,f+1}=\frac{1}{2}b/(1-(1-a)(1-b))=\frac{1}{2}(b/a)/\left(1+\frac{b}{a}-b\right)$. 
This gives the correct form for the denominator in \emph{I.1} by Definition  \ref{D:Pi}(I)(1). We apply direct calculation to check the other between strata cases.  Thus all the cases $n-m=2$ have been verified.
\par
Assume by induction that all statements of the proposition hold for $2\le n-m\le k$ for some $k\ge 2$. We wish to show the following induction step: 
\begin{equation}\label{E:claim}
\begin{array}{c}
\mbox{Both  } (i): \ \Pi_{m,n+1}, \mbox{ and } (ii): \ \Pi_{n,m-1}, \mbox{ conform to statements \emph{I.1}} \mbox{ and \emph{I.2}},\\  \mbox{ respectively, } \mbox{ for all }m\le f-1 \mbox{ and } n\ge f, \mbox{ with } n-m=k+1.
 \end{array}
 \end{equation} 
 There are two boundary cases, $\Pi_{f-k-1,f}$ and $\Pi_{f+k,f-1}$, that aren't covered formally by this scheme. However both of these cases actually fall under the within stratum regime. For example, in the calculation of $\rho_{f-k-1,f}$, the one-sided first passage path from $f-k-1$ to $f$ never oscillates between levels $f-1$ and $f$, so the probability $\rho_{f-k-1,f}$ is governed by a single stratum design.  Hence, $\rho_{f-k-1,f}=\frac{1}{2}/(k+1-ka)=\frac{1}{2}(b/a)/\left\{(k+1)\left(\frac{b}{a}\right)-kb\right\}$, consistent with \emph{I.1}. For the other boundary case, by similar reasoning, $\rho_{f+k,f-1}=\frac{1}{2}/(k+1-kb)$, consistent with \emph{I.2}. So the boundary cases have been resolved for all $k$. 
\par
We proceed with our argument for establishing \eqref{E:claim}. By our ranges for $m$ and $n$ we have $\gamma_m=1-a$ and $\gamma_n=1-b$ throughout. By Definition  \ref{D:Pi}(I), we compute $u_{m,n}$ by \eqref{E:umj} under \eqref{E:claim} as follows.
\begin{equation}\label{E:umnformula}
\begin{array}{c}
u_{m,n}=\left\{1-\gamma_m\gamma_n \frac{b/a}{\Pi_{m,n}\Pi_{n,m}}\right \}^{-1}= \frac{\Pi_{m,n}\Pi_{n,m}}{\Pi_{m,n}\Pi_{n,m}-\gamma_m\gamma_n(b/a)}. 
\end{array}
\end{equation}
Now write $m=f-\ell$ and $n=f+j$ for some $\ell\ge 1$ and $j\ge 0$ with $\ell+j=k\ge 2$. By the induction hypothesis we can write $\Pi_{m,n}=j+\ell(\frac{b}{a})-(\ell+j-1)b$, and also $\Pi_{n,m}=(j+1)+(\ell-1)(\frac{b}{a})-(\ell+j-1)b$. Now, by direct calculation, we have a simple identity for the denominator of the right hand side of \eqref{E:umnformula}: 
\begin{equation}\label{E:PimnPinmID}
\begin{array}{c}
\Pi_{m,n}\Pi_{n,m}-\gamma_m\gamma_n(b/a)=\left \{j+1+\ell(\frac{b}{a})-(\ell+j)b\right \}\Pi_{m+1,n},
\end{array}
\end{equation}
where we applied the induction hypothesis for $\Pi_{m,n}$, $\Pi_{n,m}$, and $\Pi_{m+1,n}$.
Now rewrite \eqref{E:rhoupdown1}(\emph{i}) by applying Definition \ref{D:Pi} and   \eqref{E:umnformula}--\eqref{E:PimnPinmID}, as follows. We have that $\rho_{m,n+1}$ is given by:
\begin{equation}\label{E:rhoup2}
\begin{array}{c}
\left[\frac{\frac{1}{2}b/a}{\Pi_{m,n}}\right] \left[\frac{\frac{1}{2}b/a}{\Pi_{m+1,n+1}}\right ] \left[\frac{\frac{1}{2}b/a}{\Pi_{m+1,n}}\right ]^{-1}\frac{\Pi_{m,n}\Pi_{n,m}}{\left \{j+1+\ell(\frac{b}{a})-(\ell+j)b\right \}\Pi_{m+1,n}},
\end{array}
\end{equation}
with the caveat that if $m+1=f$, then the factor $b/a$ in the second and third factors on the left is replaced by 1. Finally we use that, by the induction hypothesis and all statements of the proposition themselves, we have
$\Pi_{m+1,n+1}=\Pi_{n,m}$ for all $n>m$ under \eqref{E:claim}. Therefore, simply by cancellation of 3 $\Pi$--factors, \eqref{E:rhoup2} yields
$ \rho_{m,n+1}
= \frac{\frac{1}{2}b/a}{\left \{j+1+\ell(\frac{b}{a})-(\ell+j)b\right \} }.$
Thus by Definition  \ref{D:Pi}(I)(1), the induction step \eqref{E:claim} has been verified for case (\emph{i}). The argument for the downward case (\emph{ii}) is wholly similar to the upward case (\emph{i}). In fact by direct calculation the relevant identity in place of \eqref{E:PimnPinmID} is the same except with $\Pi_{n-1,m},$ in place of $\Pi_{m+1,n}$.  And in \eqref{E:rhoup2} the roles of $b/a$ and 1 are reversed. Thus  $\rho_{f+j,f-\ell-1}= \frac{1}{2}/\left \{j+1+\ell(\frac{b}{a})-(\ell+j)b\right \}$, as required. Therefore the induction step \eqref{E:claim} has been verified.  
\qed 
\end{proof} 
\subsection{The denominators $\overline{w}_{m,n}$ of $g_{m,n}$.}\label{S:wbar}
We first consider the  homogeneous case $a=b$, and establish formulae for the denominators $w_n^{*}(a)$ of $g_{0,n}(a,a)$ defined by \eqref{E:g}, where of course $g_{m,n}(a,a)$ depends only on $|n-m|$. We will abbreviate $g_n:=g_{0,n}$ without confusion for this homogeneous case. 
Denote $\rho_n:=\rho_{0,n}=\frac{1}{2}/(n-(n-1)a)$, by Definition \ref{D:Pi} and Proposition \ref{P:Pi}, and $\lambda_n:=\lambda_{m,m+n}=\{1-4(1-a)^2k_a^{2}\rho_n^{2}g_n^{2}\}^{-1}$, defined by \eqref{E:lambdamj}.   By \eqref{E:G2up}, we have 
\begin{equation}\label{E:gup} 
g_{n+1}
=c_1g_{n}^{2}g_{n-1}^{-1}\lambda_{n}, \ \  n \ge 3; \ \ g_3=czh_a g_2 \lambda_2.
\end{equation}
We now establish that, for a certain sequence of polynomials $\{w_n^{*}(a)=w_n^{*}(a;r,y,z), n\ge 1\}$, with constant coefficient 1, we have
\begin{equation}\label{E:ghomogeneous}
g_{n}=C_{n,a}\omega_ar z^n \tau_{a}^{n-2}/w_n^{*}(a),   n\ge 2;  \  C_{n,a}:=a^{n-2}(n-(n-1)a)/(2-a).
\end{equation} 
The proposed formula \eqref{E:ghomogeneous} holds for $n=2$ with $w_{2}^{*}(a):=\omega_a$, since $C_{2,a}=1$. We also define $w_{1}^{*}(a):=1$. 
Motivated by the idea that $w_n^{*}(a)$ satisfies a Fibonacci recurrence, we introduce
\begin{equation}\label{E:tauxbetahomogeneous}
\begin{array}{c}
x_a:=a^2z^2\tau_{a}^2; \ \ \beta_a: = 1+z^2(a^2-(1-a)^2r^2(y^2+a^2(1-y)^2z^2)),
\end{array}
\end{equation}
and we define  
\begin{equation}\label{E:wnstara}
w_{n+1}^{*}(a)=\beta_aw_n^{*}(a)-x_aw_{n-1}^{*}(a), \ n\ge 2.
\end{equation}
The form of \eqref{E:tauxbetahomogeneous} used to make the definition \eqref{E:wnstara} may be guessed by taking account of \eqref{E:vreduction} together with the proposed form \eqref{E:ghomogeneous}.
That is, we already have defined $w_1^{*}(a)$ and $w_2^{*}(a)$, consistent with \eqref{E:ghomogeneous}, and we can derive $g_3$ via \eqref{E:gup}. So we will have thereby guessed $w_3^{*}(a)$.  We can likewise predict $w_4^{*}(a)$. But  \eqref{E:vreduction} gives that the appropriate $x_a$  for \eqref{E:wnstara} is $x_a=\left(w_{3}^{*}(a)^2-w_4^{*}(a)w_2^{*}(a)\right)/\left(w_{2}^{*}(a)^2-w_3^{*}(a)w_1^{*}(a)\right)$. Once we have $x_a$, we find $\beta_a$ via \eqref{E:wnstara}, and we also extend the definition \eqref{E:wnstara} to $n=0$ by solving \ref{E:wnstara} backwards:
$w_0^{*}(a):=(\beta_a-\omega_a)/x_a.$
We define also the associated numerators $\{q_n^{*}(a)\}$ defined by the Fibonacci recurrence $q_{n+1}^{*}(a)=\beta_aq_n^{*}(a)-x_aq_{n-1}^{*}(a), \ n\ge 1,$ with initial conditions
\begin{equation}\label{E:qwstar0}
\begin{array}{c}
q_0^{*}(a):=-(1-y)(1+y+(1-a)^2r^2y^2z^2(1-y))/\tau_a^2, \ \  q_1^{*}(a):=y^2.
\end{array}
\end{equation}
By the choice of $q_1^{*}(a)$ we obtain the form $K_1=r^2z^2q_1^{*}(a)/w_1^{*}(a)$ for \eqref{E:KN}. By the choice of $q_0^{*}(a)$ we obtain by direct computation an interlacing form $w_{n+1}^{*}q_{n}^{*}-w_{n}^{*}q_{n+1}^{*}=a^2z^2x_a^{n-1}$ at $n=0$.
By direct computation to  check the initial conditions for Fibonacci recurrences, we have:
\begin{equation}\label{E:qwstar1}
\begin{array}{c}
q_n^{*}(a)=c_1q_n(x_a,\beta_a) +c_2w_n(x_a,\beta_a),\ \ c_2:= q_0^{*}(a), c_1= y^2 - c_2; \\ 
w_n^{*}(a)=c'_1q_n(x_a,\beta_a) +c'_2w_n(x_a,\beta_a),  \  \ c'_2:= w_0^{*}(a), c'_1= 1 - c'_2.
\end{array}
\end{equation}
\par
We first verify \eqref{E:ghomogeneous} for $n=3$. 
By \eqref{E:khomega} and \eqref{E:G1up}, we have $g_3=czh_ag_2\lambda_2=cz(\tau_a/\omega_a)rz^2(1-a^2(1-a)^2r^2z^4/\omega_a^2)^{-1}$, since $\rho_2=\frac{1}{2}/(2-a)$ and $k_a=a(2-a)/\omega_a$. This yields $g_3=C_{3,a} \omega_a rz^3 \tau_a /w_{3}^{*}(a)$, by direct computation.
Now assume by induction that \eqref{E:ghomogeneous} holds with $m$ in place of $n$ for all $2\le m\le n$, for some $n\ge 3$. Then by  \eqref{E:gup} we have $g_{n+1}=c_{n+1}[\omega_ar z^n \tau_{a}^{n-2}/w_n^{*}]^2[\omega_a r z^{n-1} \tau_{a}^{n-3}/w_{n-1}^{*}]^{-1}\lambda_n$, where $c_{n+1}$ incorporates both the constant $c_1$ of \eqref{E:gup} and the factor $C_{n,a}^2/C_{n-1,a}$. By direct substitution of the induction hypothesis, and taking care to write the term  $g_n^{2}$ that appears in $\lambda_n$ in terms of $x_a$  via $\tau_{a}^2=a^{-2}z^{-2}x_a$, so that $\lambda_n=\left(1-a^2(1-a)^2r^2z^4x_{a}^{n-2}/w_n^{*}(a)^2\right)^{-1},$ we obtain 
\begin{equation}\label{E:ginduction}
g_{n+1}=c_{n+1}\omega_ar z^{n+1} \tau_{a}^{n-1}w_{n-1}^{*}(a)/\{w_n^{*}(a)^{2}-a^2(1-a)^2r^2z^4x_a^{n-2}\}. 
\end{equation} 
 To compute the denominator in this last expression, we note the following.
\begin{lemma}\label{L:homogeneousinterlacing}
Let $w_{n}^{*}(a)$  be defined as the solution to \eqref{E:wnstara}. Then for all $n\ge 1$ we have: \ \ 
$w_{n}^{*}(a)^{2}-w_{n+1}^{*}(a)w_{n-1}^{*}(a)=a^2(1-a)^2r^2z^4x_a^{n-2}.$
\end{lemma}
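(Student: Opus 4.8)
\textbf{Proof plan for Lemma \ref{L:homogeneousinterlacing}.}

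The plan is to use the general interlacing identity \eqref{E:vreduction} applied to the recurrence \eqref{E:wnstara}, whose coefficients $\beta_a$ and $x_a$ are independent of $n$. Since $w_n^{*}(a)$ satisfies $w_{n+1}^{*}=\beta_a w_n^{*}-x_a w_{n-1}^{*}$ for $n\ge 2$, and this recurrence has been extended to $n=0$ by the backward solve $w_0^{*}(a)=(\beta_a-\omega_a)/x_a$, the sequence $\{w_n^{*}(a)\}_{n\ge 0}$ is a genuine two-term recurrence sequence with constant coefficients. Hence \eqref{E:vreduction} (with $\beta=\beta_a$, $x=x_a$, $v_n=w_n^{*}(a)$) gives, for all $n\ge 1$,
\[
w_{n+1}^{*}(a)w_{n-1}^{*}(a)-\bigl(w_n^{*}(a)\bigr)^2=\beta_a^{-1}x_a^{\,n-1}\bigl(w_3^{*}(a)w_0^{*}(a)-w_2^{*}(a)w_1^{*}(a)\bigr).
\]
So the lemma reduces to the single base-case identity $\beta_a^{-1}x_a^{-1}\bigl(w_2^{*}(a)w_1^{*}(a)-w_3^{*}(a)w_0^{*}(a)\bigr)=a^2(1-a)^2 r^2 z^4 x_a^{-2}$, i.e. to verifying
\[
w_2^{*}(a)w_1^{*}(a)-w_3^{*}(a)w_0^{*}(a)=a^2(1-a)^2 r^2 z^4\,\beta_a x_a^{-1}.
\]

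The next step is to make all the ingredients explicit. We have $w_1^{*}(a)=1$, $w_2^{*}(a)=\omega_a=1-(1-a)^2 r^2 y^2 z^2$, $w_3^{*}(a)=\beta_a w_2^{*}(a)-x_a w_1^{*}(a)=\beta_a\omega_a-x_a$, and $w_0^{*}(a)=(\beta_a-\omega_a)/x_a$, with $x_a=a^2 z^2\tau_a^2$ and $\beta_a=1+z^2\bigl(a^2-(1-a)^2 r^2(y^2+a^2(1-y)^2 z^2)\bigr)$, and $\tau_a=1+(1-a)^2 r^2 z^2 y(1-y)$ from \eqref{E:khomega}. Substituting, the left side becomes
\[
\omega_a-(\beta_a\omega_a-x_a)\frac{\beta_a-\omega_a}{x_a}=\frac{\omega_a x_a-(\beta_a\omega_a-x_a)(\beta_a-\omega_a)}{x_a}=\frac{\omega_a x_a-\beta_a^2\omega_a+\beta_a\omega_a^2+\beta_a x_a-\omega_a x_a}{x_a},
\]
which simplifies to $x_a^{-1}\bigl(\beta_a x_a-\beta_a\omega_a(\beta_a-\omega_a)\bigr)=\beta_a x_a^{-1}\bigl(x_a-\omega_a(\beta_a-\omega_a)\bigr)$. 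Thus the claim is equivalent to the polynomial identity $x_a-\omega_a(\beta_a-\omega_a)=a^2(1-a)^2 r^2 z^4$, which no longer involves $w_0^{*}$ or $w_3^{*}$ and is a finite computation in the variables $a,r,y,z$.

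The final step is to verify $x_a-\omega_a(\beta_a-\omega_a)=a^2(1-a)^2 r^2 z^4$ directly. Writing $P:=(1-a)^2 r^2 z^2$ for brevity, one has $\omega_a=1-Py^2$, $\tau_a=1+Py(1-y)$, so $x_a=a^2 z^2(1+Py(1-y))^2$, and $\beta_a-\omega_a=z^2\bigl(a^2-P y^2/z^2\cdot 0 + \ldots\bigr)$ — more precisely $\beta_a-\omega_a = z^2 a^2 - (1-a)^2 r^2 z^2\bigl(y^2+a^2(1-y)^2 z^2\bigr)+Py^2 = z^2 a^2 - P y^2 - P a^2 (1-y)^2 z^2 + P y^2 = z^2 a^2\bigl(1-P(1-y)^2\bigr)$. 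Then $\omega_a(\beta_a-\omega_a)=z^2 a^2(1-Py^2)(1-P(1-y)^2)$, and
\[
x_a-\omega_a(\beta_a-\omega_a)=z^2 a^2\Bigl[(1+Py(1-y))^2-(1-Py^2)(1-P(1-y)^2)\Bigr].
\]
Expanding the bracket: $(1+Py(1-y))^2=1+2Py(1-y)+P^2y^2(1-y)^2$, and $(1-Py^2)(1-P(1-y)^2)=1-P y^2-P(1-y)^2+P^2 y^2(1-y)^2$, so the difference is $2Py(1-y)+Py^2+P(1-y)^2=P\bigl(2y(1-y)+y^2+(1-y)^2\bigr)=P(y+(1-y))^2=P$. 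Hence $x_a-\omega_a(\beta_a-\omega_a)=z^2 a^2 P=z^2 a^2(1-a)^2 r^2 z^2=a^2(1-a)^2 r^2 z^4$, as needed; the authors would likely label this a \emph{direct calculation}. I expect no genuine obstacle here: the only mild subtlety is confirming that the recurrence \eqref{E:wnstara} indeed holds at $n=0$ (so that $\{w_n^{*}(a)\}_{n\ge 0}$ is a bona fide recurrence sequence and \eqref{E:vreduction} applies starting from $n=1$), which is true by construction of $w_0^{*}(a)$; everything else is bookkeeping.
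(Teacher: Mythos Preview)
Your proof is correct and follows essentially the same route as the paper: apply the interlacing identity \eqref{E:vreduction} to the sequence $\{w_n^{*}(a)\}$ (legitimate since $w_0^{*}(a)$ was defined precisely so that the recurrence extends down to $n=1$), and then verify the single base-case identity $w_2^{*}w_1^{*}-w_3^{*}w_0^{*}=a^2(1-a)^2 r^2 z^4 \beta_a x_a^{-1}$. The paper records this last step simply as a ``direct calculation'' (i.e., computer algebra), whereas you carry it out by hand via the substitution $P=(1-a)^2 r^2 z^2$ and the neat observation that the bracketed difference collapses to $P(y+(1-y))^2=P$; otherwise the arguments coincide.
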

\begin{proof} By the definition \eqref{E:wnstara} and by \eqref{E:vreduction} we have:
\begin{equation}\label{E:homogeneousinterlacing}
w_{n}^{*}(a)^{2}-w_{n+1}^{*}(a)w_{n-1}^{*}(a)=-\beta_{a}^{-1}x_{a}^{n-1}(w_{3}^{*}(a)w_{0}^{*}(a)-w_{2}^{*}(a)w_{1}^{*}(a)).
\end{equation}
By direct calculation, $w_{3}^{*}(a)w_{0}^{*}(a)-w_{2}^{*}(a)w_{1}^{*}(a)=-a^2r^2z^4(1-a)^2\beta_{a}/x_{a}.$ Hence the lemma follows by substitution of this last formula into \eqref{E:homogeneousinterlacing}. 
\qed \end{proof}
Up to the form of the constant $C_{n,a}$, relation \eqref{E:ghomogeneous} now follows by induction from \eqref{E:ginduction} and Lemma \ref{L:homogeneousinterlacing}. To verify the constant, we need only verify the claim: $w_n^{*}(a)[\mathbf{1}]=a^{n-1}(n-(n-1)a)$. This is easily verified by induction, \eqref{E:wnstara}, and direct computation. Hence we have verified \eqref{E:ghomogeneous}.
\par
Now turn to the full model. We recursively define an array of functions $\{\overline{w}_{m,n}=\overline{w}_{m,n}(a,b)\}$ such that $\overline{w}_{m,n}$ will turn out to be the denominator polynomial with constant term 1 for the rational expression of $g_{m,n}$. We first define initial cases:
\begin{equation}\label{E:wbarIC}
\begin{array}{l}
\overline{w}_{m,m+2}:=\omega[a,b]_{m}^{+},  \ m\ge0;\ \ \overline{w}_{n,n-2}:=\omega[a,b]_{n}^{-},  \ n-2\ge0; \\ \overline{w}_{m,m+1}=\overline{w}_{m+1,m}=1, \ m\ge 0.
\end{array}
\end{equation}
For example, if $m\le f-2$, then $[a,b]_{m}^{+}=(a,a)$, so $\overline{w}_{m,m+2}:=\omega_a$. 
We require a generalization of $x_a,$ and $\beta_a$  of \eqref{E:tauxbetahomogeneous} to make our definition of $\{\overline{w}_{m,n}\}$  for two strata, as follows. Define
\begin{equation}\label{E:tauxbeta}
\begin{array}{c}
x(a,b):=b^2z^2\tau^2(a,b); \ \ \beta(a,b)=\beta_b-(b-a)b^2(1-b)r^2(1-y)^2z^4.
\end{array}
\end{equation}
for $\beta_b$ defined by \eqref{E:tauxbetahomogeneous}.
Here we note that $\tau(a,b)$  is symmetric in $a$ and $b$, so $x(b,a)=a^2z^2\tau^2(a,b)$ for $\tau(a,b)$  defined by \eqref{E:khomega}. 
\begin{definition} \label{D:wbar}  
Denote $\overline{w}_{m,n}=\overline{w}_{m,n}(a,b)$.\\
(I) Define the upward denominator  $\overline{w}_{m,n}$ for all $n-m\ge 2$ by:\\
$\begin{array}{l} 
(1) \mbox{ \  } \overline{w}_{m,m+\ell}:=w_{\ell}^{*}(a)$,  $\mbox{ \  }m< m+\ell \le f$;  $\mbox{ \  } \overline{w}_{m,m+\ell}:=w_{\ell}^{*}(b),  \mbox{ \  }f\le m<m+\ell;
\\ (2)\mbox{ \ \ \ \  } \overline{w}_{f-\ell,f+1}:=\frac{1-b}{1-a}w_{\ell+1}^{*}(a)+\frac{b-a}{1-a}w_{\ell}^{*}(a),  \mbox{ \  } 1\le \ell \le f;
\\ (3)\mbox{ \ \ \ \ } \overline{w}_{m,f+2}:=\beta(a,b)\overline{w}_{m,f+1}-x(a,b)\overline{w}_{m,f},  \mbox{ \  } m\le f-1;
\\(4)\mbox{ \ \ \ \  }  \overline{w}_{m,f+j+1}:=\beta_b\overline{w}_{m,f+j}-x_b\overline{w}_{m,f+j-1},  \mbox{ \  } m\le f-1, \  j\ge 2.
\end{array}$\\ \\
(II) Define the downward denominator $\overline{w}_{n,m}$ for all $n-m\ge 2$ by:\\
$\begin{array}{l} 
(1) \mbox{ \  } \overline{w}_{m+\ell,m}:=w_{\ell}^{*}(a), \mbox{ \  }m<m+\ell \le f-1;  \mbox{ \  } \overline{w}_{m+\ell,m}:=w_{\ell}^{*}(b),  \mbox{ \  }f-1\le m;
\\ (2)\mbox{ \ \ \ \  }\overline{w}_{f+j,f-2}:=\frac{1-a}{1-b}w_{j+2}^{*}(b)+\frac{a-b}{1-b}w_{j+1}^{*}(b),  \mbox{ \  } 0\le j;
\\ (3)\mbox{ \ \ \ \  } \overline{w}_{n,f-3}:=\beta(b,a)\overline{w}_{n,f-2}-x(b,a)\overline{w}_{n,f-1},  \mbox{ \  } f\le n;
\\ (4)\mbox{ \ \ \ \  } \overline{w}_{n,f-\ell-2}:=\beta_a\overline{w}_{n,f-\ell-1}-x_a\overline{w}_{n,f-\ell},  \mbox{ \  } f\le n, \ \ell\ge 2. 
\end{array}$
\end{definition}
Notice that in Definition \ref{D:wbar}(II), we are effectively reversing the roles of $a$ and $b$ from (I). 
In case $a=b$, we simply have $\overline{w}_{m,n}=w_{|n-m|}^{*}(a), \ |n-m|\ge 2$. 
We write the first step of \emph{crossing over the threshold of the stratum} in either upward or downward directions as a linear combination of two successive homogeneous case solutions. For the next step over the threshold we use the \emph{mixed} parameters for $x$ and $\beta$, and for further steps we use the appropriate homogeneous parameters for $x$ and $\beta$. With no crossing over a stratum, the homogeneous solution is shown. Finally, Definition \ref{D:wbar} and \eqref{E:wbarIC} are consistent. For example, in part (I)(2) of the Definition,  we find: 
$\begin{array}{c}
\overline{w}_{f-1,f+1}=\frac{1-b}{1-a}w_{2}^{*}(a)+\frac{b-a}{1-a}w_{1}^{*}(a) =\frac{1-b}{1-a}\omega(a,a)+\frac{b-a}{1-a}=\omega(a,b)
\end{array}$. 
\subsubsection{Interlacing identity and closed formula for $\overline{w}_{m,n}$.}\label{S:interlacing}
To establish a formula for $g_{m,n}$, we will employ an interlacing identity for the denominators  $\overline{w}_{m,n}$. 
Define the interlacing bracket:  
\begin{equation}\label{E:bracketwbar}
[\overline{w}]_{m,n}:=\overline{w}_{m,n}\overline{w}_{m+1,n+1}-\overline{w}_{m,n+1}\overline{w}_{m+1,n}, \ m\le n-2.
\end{equation}
It actually suffices to consider only the upward direction for  $[\overline{w}]_{m,n}$, since by Lemma \ref{L:wbarID}, the natural corresponding downward definition, $ [\overline{w}]_{n,m}:=  \overline{w}_{n,m}\overline{w}_{n-1,m-1}-\overline{w}_{n,m-1}\overline{w}_{n-1,m}, \ m\le n-2,$
satisfies $[\overline{w}]_{n,m}=[\overline{w}]_{m,n}.$
\begin{proposition}\label{P:interlacing}
The following identities hold for $[\overline{w}]_{m,n}$: 
\begin{enumerate}
\item $\mbox{  }  [\overline{w}]_{f-\ell,f+j}=a^2r^2z^4(1-a)(1-b)x_a^{\ell-2}x(a,b)x_b^{j-1},$  $\mbox{ \  } \ell\ge 2, \ j\ge1$;\\
\item $\mbox{  }  [\overline{w}]_{f-\ell,f}=a^2r^2z^4(1-a)(1-b)x_a^{\ell-2}$,  $\mbox{ \  } \ell\ge 2$; \\
\item $\mbox{   }  [\overline{w}]_{f-1,f+j}=b^2r^2z^4(1-a)(1-b)x_b^{j-1},$  $\mbox{ \  } j\ge1$;\\
\item $ [\overline{w}]_{m,m+\ell}=a^2r^2z^4(1-a)^2x_a^{\ell-2}$, $\mbox{ } m+\ell\le f-1;$  \\
\item $[\overline{w}]_{m,m+j}=b^2r^2z^4(1-b)^2x_b^{j-2};$  $\mbox{ } f\le m$, \ $j \ge 2$.
\end{enumerate}
\end{proposition}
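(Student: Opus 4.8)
The plan is to prove Proposition \ref{P:interlacing} by induction on the gap $n-m$, treating the five cases in a unified way via the recurrence that the bracket $[\overline{w}]_{m,n}$ inherits from the defining recurrences of $\overline{w}_{m,n}$ in Definition \ref{D:wbar}. The key observation is that within a single stratum, $\overline{w}_{m,n}=w_{|n-m|}^{*}(a)$ or $w_{|n-m|}^{*}(b)$, so cases (4) and (5) of the proposition are just restatements of Lemma \ref{L:homogeneousinterlacing} (with $a$ or with $b$); these are the base of the argument and require essentially no new work. For case (2), note that the indices $f-\ell,\dots,f$ all lie at or below the threshold, so again $\overline{w}_{f-\ell,f}=w_{\ell}^{*}(a)$, $\overline{w}_{f-\ell+1,f+1}$ crosses over by one step (Definition \ref{D:wbar}(I)(2)), and one expands $[\overline{w}]_{f-\ell,f}$ using these explicit forms together with the homogeneous interlacing identity; similarly case (3) uses Definition \ref{D:wbar}(II)(2) and the $b$-version of Lemma \ref{L:homogeneousinterlacing}.

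The heart of the matter is case (1), $[\overline{w}]_{f-\ell,f+j}$, which genuinely straddles the stratum boundary and where the mixed parameters $x(a,b)$, $\beta(a,b)$ enter. Here I would fix $\ell$ (or better, fix $j$ and induct on $\ell$, then fix $\ell$ and induct on $j$, exploiting the up/down symmetry $[\overline{w}]_{n,m}=[\overline{w}]_{m,n}$ from Lemma \ref{L:wbarID}) and use the fact that, for $j\ge 2$, $\overline{w}_{m,f+j+1}=\beta_b\overline{w}_{m,f+j}-x_b\overline{w}_{m,f+j-1}$ holds for each of the two values $m=f-\ell$ and $m=f-\ell+1$. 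Substituting this common three-term recurrence into \eqref{E:bracketwbar} telescopes: the dependence on $j$ is governed by the scalar recurrence $[\overline{w}]_{f-\ell,f+j+1}=\ldots$ — more precisely, one gets $[\overline{w}]_{m,n+1}=\beta_b[\overline{w}]_{m,n}\cdot(\text{something})-x_b[\overline{w}]_{m,n-1}\cdot(\text{something})$, but actually the cleaner route is the standard one behind \eqref{E:vreduction}: a bracket of two solutions of the same two-term recurrence $v_{n+1}=\beta v_n - x v_{n-1}$ satisfies $[\overline{w}]_{m,n+1} = x_b \,[\overline{w}]_{m,n}$ once both columns obey the recurrence, so the $x_b^{j-1}$ factor propagates automatically from the seed value $[\overline{w}]_{f-\ell,f+1}$. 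Thus the whole problem reduces to computing two seeds: $[\overline{w}]_{f-\ell,f+1}$ (which should equal $a^2r^2z^4(1-a)(1-b)x_a^{\ell-2}x(a,b)\cdot x_b^{0}$) and then checking the single transitional step from $f+1$ to $f+2$, where Definition \ref{D:wbar}(I)(3) with the mixed parameters $\beta(a,b),x(a,b)$ is used, to confirm the $x(a,b)$ factor appears exactly once and the ratio to the next step is $x_b$ not $x(a,b)$.

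To compute the seed $[\overline{w}]_{f-\ell,f+1}$, I would expand $\overline{w}_{f-\ell,f+1}=\frac{1-b}{1-a}w_{\ell+1}^{*}(a)+\frac{b-a}{1-a}w_{\ell}^{*}(a)$ and likewise $\overline{w}_{f-\ell+1,f+1}=\frac{1-b}{1-a}w_{\ell}^{*}(a)+\frac{b-a}{1-a}w_{\ell-1}^{*}(a)$, while $\overline{w}_{f-\ell,f}=w_{\ell}^{*}(a)$, $\overline{w}_{f-\ell+1,f}=w_{\ell-1}^{*}(a)$ and $\overline{w}_{f-\ell,f+2}$, $\overline{w}_{f-\ell+1,f+2}$ come from Definition \ref{D:wbar}(I)(3). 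Plugging into \eqref{E:bracketwbar}, the cross terms collect into an expression of the form $\bigl(\tfrac{1-b}{1-a}\bigr)\bigl[(w_{\ell+1}^{*}w_{\ell-1}^{*}-(w_{\ell}^{*})^{2})(a)\bigr]\cdot(\cdots)$ plus lower-order pieces, each of which is handled by Lemma \ref{L:homogeneousinterlacing}; the mixed parameters enter only through $\overline{w}_{m,f+2}=\beta(a,b)\overline{w}_{m,f+1}-x(a,b)\overline{w}_{m,f}$, and one finds the $x(a,b)$ factor is forced out cleanly. This last bookkeeping — keeping straight which $w^{*}$'s carry argument $a$ versus $b$, and verifying the transitional identity at the boundary — is the main obstacle, and it is exactly the kind of step the paper flags as a \emph{direct calculation} to be checked with computer algebra \cite{Mor2018}; I would structure the writeup so that the conceptual reduction (telescoping the $x_b$ powers via \eqref{E:vreduction}) is done by hand and only the two boundary seeds are relegated to direct calculation. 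The analogous downward identities needed for Lemma \ref{L:wbarID}'s symmetry, and cases (2)--(3), then follow by the same template with $a$ and $b$ interchanged.
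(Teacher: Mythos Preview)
Your approach is essentially the paper's: cases (4)--(5) are Lemma \ref{L:homogeneousinterlacing}, and for the between-strata cases you exploit that both columns of $[\overline{w}]_{m,n}$ obey the same Fibonacci recurrence in the upper index, giving the one-step relation $[\overline{w}]_{f-\ell,f+j+1}=x[a,b]_j\,[\overline{w}]_{f-\ell,f+j}$ (the paper's \eqref{E:interlacing4}), so that the $x_b$-powers telescope from a single seed.

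Two places where the paper is tighter than your plan. First, for case (1) the paper takes statement (2) itself, $[\overline{w}]_{f-\ell,f}$, as the seed rather than $[\overline{w}]_{f-\ell,f+1}$; this needs only Definition \ref{D:wbar}(I)(1)--(2) and Lemma \ref{L:homogeneousinterlacing} (the computation is your \eqref{E:interlacing1}-style expansion, where the $(b-a)/(1-a)$ cross terms cancel), and then the mixed factor $x(a,b)$ drops out automatically from the $j=0$ instance of \eqref{E:interlacing4} with no separate ``transitional'' check. Your seed $[\overline{w}]_{f-\ell,f+1}$ already requires (I)(3), so you are doing the same work twice. Second, your route for case (3) via Definition \ref{D:wbar}(II)(2) is not quite right as stated: (II)(2) gives downward denominators $\overline{w}_{f+j,f-2}$, not the upward $\overline{w}_{f-1,f+j}$ appearing in $[\overline{w}]_{f-1,f+j}$, so you would need the symmetry of Lemma \ref{L:wbarID} to translate. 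The paper instead stays on the upward side, reuses \eqref{E:interlacing4} at $\ell=1$ for $j\ge 1$, and computes the single seed $[\overline{w}]_{f-1,f+1}$ directly from \eqref{E:wbarIC} and (I)(3) (this is \eqref{E:interlacing5}). Neither issue is a gap, but both simplify the bookkeeping you were worried about.
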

\begin{proof} By Definition \ref{D:wbar}(I)(1) and by Lemma \ref{L:homogeneousinterlacing} we have that statements \emph{4--5} of the proposition hold. Next fix $\ell\ge 2$ and notice that the case $j=0$ in \emph{1} is similar to the case of statement \emph{2}, the difference being $x(a,b)\neq x_b$. We will first verify \emph{2}. 
Thus we write, using the Definition \ref{D:wbar} and \eqref{E:bracketwbar}, that $ [\overline{w}]_{f-\ell,f}$ is given by:
\begin{equation}\label{E:interlacing1}
\begin{array}{l}
 w_{\ell}^{*}(a)\{\frac{1-b}{1-a}w_{\ell}^{*}(a)+\frac{b-a}{1-a}w_{\ell-1}^{*}(a)\}-\{\frac{1-b}{1-a}w_{\ell+1}^{*}(a)+\frac{b-a}{1-a}w_{\ell}^{*}(a)\}w_{\ell-1}^{*}(a).
 \end{array}
 \end{equation}
 The $w_{\ell}^{*}(a)w_{\ell-1}^{*}(a)$ terms cancel in \eqref{E:interlacing1}. Thus by \eqref{E:interlacing1} and Lemma \ref{L:homogeneousinterlacing}, 
$[\overline{w}]_{f-\ell,f}=\frac{1-b}{1-a}\left ( w_{\ell}^{*}(a)^2-w_{\ell+1}^{*}(a)w_{\ell-1}^{*}(a)\right)= a^2(1-a)(1-b)r^2z^4x_{a}^{\ell-2}.$
 Hence statement \emph{2} is proved. 
 \par
 We now turn to statement \emph{1}. Fix $\ell\ge 2$ and  let $j\ge 0$. Denote $[a,b]_0=(a,b)$ and $[a,b]_j=(b,b)$ for $j\ge1$. Thus, by Definition \ref{D:wbar}(I)(3)--(4)  and \eqref{E:bracketwbar},
 \begin{equation}\label{E:interlacing3}
\begin{array}{c}
 [\overline{w}]_{f-\ell,f+j+1}=\overline{w}_{f-\ell,f+j+1}\left \{\beta[a,b]_{j}\overline{w}_{f-\ell+1,f+j+1}-x[a,b]_{j}\overline{w}_{f-\ell+1,f+j}\right \}\\ \\ -\left \{\beta[a,b]_{j}\overline{w}_{f-\ell,f+j+1}-x[a,b]_{j}\overline{w}_{f-\ell,f+j}\right \} \overline{w}_{f-\ell+1,f+j+1}.
 \end{array}
 \end{equation}
 Now the terms of \eqref{E:interlacing3} involving $\beta[a,b]_j$ cancel and we obtain from \eqref{E:interlacing3} and \eqref{E:bracketwbar} that
 \begin{equation}\label{E:interlacing4}
 [\overline{w}]_{f-\ell,f+j+1}=x[a,b]_{j} [\overline{w}]_{f-\ell,f+j}.
 \end{equation}
 Now put $j=0$ in \eqref{E:interlacing4} and conclude by \emph{2} and \eqref{E:interlacing4} that statement \emph{1} holds for the initial case $j=1$ for the given fixed $\ell\ge 2$. Now for the same fixed index $\ell$, take statement \emph{1} as an induction hypothesis for induction on $j\ge 1$. We have just established this induction hypothesis for $j=1$. Thus verify by \eqref{E:interlacing4} again that the induction step holds since $x[a,b]_j=x(b,b)=x_b$ for all $j\ge 1$.  Thus statement \emph{1} is proved.  
 \par
Finally we turn to statement \emph{3}. We note that \eqref{E:interlacing3}--\eqref{E:interlacing4} continues to hold by Definition \ref{D:wbar}(I)(3) with $\ell=1$ as long as $j\ge 1$. Now we compute by \eqref{E:wbarIC}--\eqref{E:tauxbeta},  Definition \ref{D:wbar}(I)(3), and the interlacing bracket definition \eqref{E:bracketwbar} that, since by  \eqref{E:wbarIC}, $\overline{w}_{f-1,f+1} = \omega(a,b)$, while by Definition \ref{D:wbar}, $\overline{w}_{f,f+2} =w_2^{*}(b)= \omega(b,b)$, 
 \begin{equation}\label{E:interlacing5}
 \begin{array}{l}
 [\overline{w}]_{f-1,f+1}=\omega(a,b)\omega(b,b)-\left(\beta(a,b)\omega(a,b)-x(a,b)\cdot 1\right)\cdot 1\\ 
 \ \ \ \ \ \ \ \ \ \ \ \ \ \  =\omega(a,b)\left(\omega(b,b)-\beta(a,b)\right)+x(a,b)= b^2(1-a)(1-b)r^2z^4,
 \end{array}
 \end{equation}
 where at the last step we make a direct calculation based on the definitions in \eqref{E:khomega} and \eqref{E:tauxbeta}.
 Now take statement \emph{3} as an induction hypothesis for induction on $j\ge 1$. By \eqref{E:interlacing5}  have established this induction hypothesis for $j=1$. Thus verify by \eqref{E:interlacing4} with $\ell=1$ and $j\ge 1$ that the induction step holds since $x[a,b]_j=x(b,b)=x_b$ for all $j\ge 1$.  So, statement \emph{3} is proved. \qed \end{proof}
\par
We turn to the task of obtaining a closed formula for $\overline{w}_{m,n}$.
By Definition \ref{D:wbar}(I)(4), given $m=f-\ell<f$, $\overline{w}_{m,f+1}$  and $\overline{w}_{m,f+2}$ form the initial conditions for a recurrence $\overline{w}_{m,f+j+1}:=\beta_b\overline{w}_{m,f+j}-x_b\overline{w}_{m,f+j-1}$, $j\ge 2$. Put $m=f-\ell$ for some $\ell\ge 1$. We denote the vector of these upward initial conditions across the stratum threshold by the $2\times1$ vector $\mathbf{W}(\ell)$. Then we define a $2\times2$ matrix $Q(b)$, and for each $\ell<f$, a $2\times1$ vector $\mathbf{d}=\mathbf{d}(\ell)$ by   
\begin{equation}\label{E:Q}
Q(b):=
\left[ 
\begin{array}{ll}
q_1^{*}(b) & w_1^{*}(b) \\ 
q_2^{*}(b)& w_2^{*}(b)
\end{array}\right] , \ \mathbf{W}(\ell):=
\left[ \begin{array}{l} \overline{w}_{f-\ell,f+1} \\ 
\overline{w}_{f-\ell,f+2} \end{array}\right ] =Q(b)\mathbf{d}; \ 
\mathbf{d}:=\left[ \begin{array}{l}d_1(\ell) \\ d_2(\ell) \end{array} \right ].
\end{equation}
By Definitions \ref{D:wbar}(I)(1--3), 
we can write each term of the right side of the recurrence of (I)(3) using (I)(1--2) in terms of $w_{\ell}^{*}(a)$ and $w_{\ell+1}^{*}(a)$ as follows:
$\overline{w}_{f-\ell,f+1}=\frac{1-b}{1-a}w_{\ell+1}^{*}(a)+\frac{b-a}{1-a}w_{\ell}^{*}(a)$ and $\overline{w}_{f-\ell,f}=w_{\ell}^{*}(a)$, so 
$\overline{w}_{f-\ell,f+2}
=\beta(a,b)\left(\frac{1-b}{1-a}w_{\ell+1}^{*}(a)+\frac{b-a}{1-a}w_{\ell}^{*}(a)\right )-x(a,b)w_{\ell}^{*}(a).$ We combine terms with the notation
$\kappa(a,b):=\left(\frac{b-a}{1-a}\right)\beta(a,b)-x(a,b).$ Thus 
\begin{equation}\label{E:B3}
\mathbf{W}(\ell) =B
\left[ \begin{array}{l}w_{\ell}^{*}(a)\\ w_{\ell+1}^{*}(a)
\end{array}\right ]; \ \ \ B=\left[ 
\begin{array}{ll}
\ \ \frac{b-a}{1-a}&\ \ \ \ \ \frac{1-b}{1-a}\\ 
\kappa(a,b)& \ \frac{1-b}{1-a}\beta(a,b)
\end{array}
\right ].
\end{equation}
By equating the two expressions for the vector $\mathbf{W}(\ell)$ in \eqref{E:Q} and \eqref{E:B3}, we recover 
\begin{equation}\label{E:M}
\mathbf{d}(\ell) =\left[ \begin{array}{l}d_1(\ell)\\ d_2(\ell)\end{array}\right ]=
M \left[ \begin{array}{l}w_{\ell}^{*}(a)\\ 
w_{\ell+1}^{*}(a)\end{array}\right ]; \ \ M:=Q(b)^{-1}B.
\end{equation}
Here it is clear that the entries of the matrix $M=\left (\mu_{i,j}\right)$, with $\mu_{i,j}=\mu_{i,j}(a,b)$ $1\le i,j\le 2$, do not depend on $\ell$. 
We note by direct calculation from \eqref{E:Q} that $\det \left(Q(b)\right)=-b^2z^2$, so we have a straightforward formula for $M$ via \eqref{E:Q} and \eqref{E:M}.
\begin{proposition}\label{P:wformula} Let $d_1(\ell)$ and $d_2(\ell)$ be defined by \eqref{E:Q}--\eqref{E:M}. Then
\begin{equation}\label{E:wformulaup}
\overline{w}_{f-\ell,f+j}=d_1(\ell)q_j^{*}(b)+d_2(\ell)w_j^{*}(b), \ \ell \ge 1, \ j\ge 1.
\end{equation}
\end{proposition}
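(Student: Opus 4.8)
\textbf{Proof plan for Proposition \ref{P:wformula}.}

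The plan is to prove \eqref{E:wformulaup} by induction on $j$, holding $\ell \ge 1$ fixed. First I would dispense with the base cases $j=1$ and $j=2$. These are exactly the statement that $\mathbf{W}(\ell) = Q(b)\,\mathbf{d}(\ell)$, since the two components of $\mathbf{W}(\ell)$ are $\overline{w}_{f-\ell,f+1}$ and $\overline{w}_{f-\ell,f+2}$, while the rows of $Q(b)$ are $(q_1^{*}(b),w_1^{*}(b))$ and $(q_2^{*}(b),w_2^{*}(b))$; so $\overline{w}_{f-\ell,f+1}=d_1(\ell)q_1^{*}(b)+d_2(\ell)w_1^{*}(b)$ and $\overline{w}_{f-\ell,f+2}=d_1(\ell)q_2^{*}(b)+d_2(\ell)w_2^{*}(b)$, which is precisely \eqref{E:wformulaup} for $j=1,2$. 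This is immediate from the definition of $\mathbf{d}(\ell)$ in \eqref{E:M} (note $Q(b)$ is invertible since $\det Q(b) = -b^2z^2 \ne 0$), combined with the relation $\mathbf{W}(\ell) = B\,[w_\ell^{*}(a),\,w_{\ell+1}^{*}(a)]^{\mathsf T}$ from \eqref{E:B3} that was derived from Definition \ref{D:wbar}(I)(1--3).

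For the induction step, suppose \eqref{E:wformulaup} holds for indices $j-1$ and $j$, with $j \ge 2$. By Definition \ref{D:wbar}(I)(4), for $j \ge 2$ we have $\overline{w}_{f-\ell,f+j+1} = \beta_b\,\overline{w}_{f-\ell,f+j} - x_b\,\overline{w}_{f-\ell,f+j-1}$. Substituting the induction hypothesis into the right-hand side gives
\begin{equation*}
\overline{w}_{f-\ell,f+j+1} = d_1(\ell)\bigl(\beta_b\, q_j^{*}(b) - x_b\, q_{j-1}^{*}(b)\bigr) + d_2(\ell)\bigl(\beta_b\, w_j^{*}(b) - x_b\, w_{j-1}^{*}(b)\bigr).
\end{equation*}
But $\{q_n^{*}(b)\}$ and $\{w_n^{*}(b)\}$ both satisfy the Fibonacci recurrence with parameters $\beta_b, x_b$ — for $w_n^{*}$ this is \eqref{E:wnstara} (valid for $n \ge 2$, hence $\beta_b w_j^{*}(b) - x_b w_{j-1}^{*}(b) = w_{j+1}^{*}(b)$ needs $j \ge 2$), and for $q_n^{*}$ this is the defining recurrence $q_{n+1}^{*}(b) = \beta_b q_n^{*}(b) - x_b q_{n-1}^{*}(b)$ stated for $n \ge 1$. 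Hence the parenthesized expressions collapse to $q_{j+1}^{*}(b)$ and $w_{j+1}^{*}(b)$ respectively, yielding $\overline{w}_{f-\ell,f+j+1} = d_1(\ell) q_{j+1}^{*}(b) + d_2(\ell) w_{j+1}^{*}(b)$, which is \eqref{E:wformulaup} at index $j+1$. This closes the induction.

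The only subtlety — and the step I would be most careful about — is matching the index ranges for the recurrences: Definition \ref{D:wbar}(I)(4) for $\overline{w}_{m,f+j+1}$ only applies for $j \ge 2$, which is exactly why the base of the induction must cover both $j=1$ and $j=2$ rather than just $j=1$; and the $w_n^{*}$ recurrence \eqref{E:wnstara} is only asserted for $n \ge 2$, so one must check that the induction step is never invoked in a regime where $\beta_b w_1^{*}(b) - x_b w_0^{*}(b) = w_2^{*}(b)$ would be needed (it is not, since the step produces index $j+1 \ge 3$ from hypotheses at $j-1 \ge 1$ and $j \ge 2$, and the recurrence is applied at $n = j \ge 2$). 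No hard computation is involved; the content is entirely in the two base cases, which are the linear-algebra identity $\mathbf{d}(\ell) = Q(b)^{-1}\mathbf{W}(\ell)$ together with \eqref{E:B3}, and these were essentially set up already in the paragraph preceding the proposition.
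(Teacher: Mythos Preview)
Your proposal is correct and follows essentially the same approach as the paper's own proof: both fix $\ell\ge 1$, verify the base cases $j=1,2$ directly from the definition $\mathbf{W}(\ell)=Q(b)\mathbf{d}(\ell)$ in \eqref{E:Q}, and then use that both $\overline{w}_{f-\ell,f+j}$ (via Definition \ref{D:wbar}(I)(4)) and the right side of \eqref{E:wformulaup} satisfy the same Fibonacci recurrence $v_{j+1}=\beta_b v_j - x_b v_{j-1}$ for $j\ge 2$. Your extra care with the index ranges is a welcome clarification but not a departure from the paper's argument.
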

\begin{proof}
Fix $\ell \ge 1$. By Definition \ref{D:wbar}(I)(4), for all $j\ge 2$ it holds that $\overline{w}_{f-\ell,f+j+1}=\beta_b\overline{w}_{f-\ell,f+j}-x_b\overline{w}_{f-\ell,f+j-1}$. But if we denote the right side of \eqref{E:wformulaup} by $v_j$, then also $v_{j+1}=\beta_bv_{j}-x_bv_{j-1}$, $j\ge 2$, because by construction each of $\{q_j^{*}(b)\} $ and $\{w_j^{*}(b) \}$ satisfy the same two term recurrence, and the coefficients $d_1(\ell)$ and $d_2(\ell)$ in \eqref{E:wformulaup} are independent of $j$. Also by definition \eqref{E:Q}, for any given $\ell\ge 1$, \eqref{E:wformulaup} holds for $j=1$ and $j=2$, that is, $v_j=\overline{w}_{f-\ell,f+j}$, $j=1,2$. Hence we have $v_j=\overline{w}_{f-\ell,f+j}$ for all $j\ge 1$. Since $\ell$ was arbitrary the proof is complete. \qed  
\end{proof} 
\begin{lemma}\label{L:wbarID}
For all $1\le m<n$, there holds: \  
$\overline{w}_{m,n}=\overline{w}_{n-1,m-1}.$
\end{lemma}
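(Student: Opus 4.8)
The plan is to prove the symmetry $\overline{w}_{m,n}=\overline{w}_{n-1,m-1}$ by comparing the upward array (Definition \ref{D:wbar}(I)) with the downward array (Definition \ref{D:wbar}(II)) and checking that the reindexing $(m,n)\mapsto(n-1,m-1)$ carries one defining scheme exactly onto the other. Since both arrays are built from the same primitive sequences $w_\ell^*(a)$, $w_\ell^*(b)$ via two-term recurrences with the same coefficients $\beta_a,x_a$ (resp. $\beta_b,x_b$) once one is away from the stratum threshold, and the same ``mixed'' coefficients $\beta(a,b),x(a,b)$ (resp. $\beta(b,a),x(b,a)$) at the crossing step, it suffices to verify agreement on the ``initial data'' of each recurrence and then invoke uniqueness of the solution of a two-term recurrence.

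First I would dispose of the case where $m,n$ lie in the same stratum. If $m<n\le f-1$, then by Definition \ref{D:wbar}(I)(1) we have $\overline{w}_{m,n}=w_{n-m}^*(a)$, and since then $n-1<f-1$ and $m-1<n-1$ with $n-1-(m-1)=n-m$, Definition \ref{D:wbar}(II)(1) gives $\overline{w}_{n-1,m-1}=w_{n-m}^*(a)$ as well; similarly for $f\le m<n$ using the ``$(b)$'' clauses, and the base cases $n-m=1$ are covered by \eqref{E:wbarIC}. The content of the lemma is therefore the cross-stratum range $m\le f-1<f\le n$. Here I would set $\ell=f-m\ge1$, $j=n-f\ge0$ and argue by induction on $j$. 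For the base cases $j=0$ and $j=1$: when $j=0$ one compares $\overline{w}_{f-\ell,f}=\frac{1-b}{1-a}w_{\ell+1}^*(a)+\frac{b-a}{1-a}w_\ell^*(a)$ (Definition \ref{D:wbar}(I)(2), or (I)(1) when $\ell=f$ as noted after the definition) against $\overline{w}_{f-1,f-\ell-1}$, which by Definition \ref{D:wbar}(II) — specifically (II)(2) with the index shift, unwound via (II)(1),(II)(3),(II)(4) as appropriate — should reduce to the identical linear combination; this is a finite check using the explicit formulas and the consistency remark worked out right after Definition \ref{D:wbar} (the $\overline{w}_{f-1,f+1}=\omega(a,b)$ computation is the $\ell=1,j=0$ instance). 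The case $j=1$ compares $\overline{w}_{f-\ell,f+1}$ with $\overline{w}_{f,f-\ell-1}$ and is handled the same way. For the induction step $j\ge1\to j+1$: by Definition \ref{D:wbar}(I)(4) (or (I)(3) when $j=1$) the left side satisfies $\overline{w}_{f-\ell,f+j+1}=\beta_b\overline{w}_{f-\ell,f+j}-x_b\overline{w}_{f-\ell,f+j-1}$, and I would show the right side $\overline{w}_{f+j,f-\ell-1}$ satisfies the same recurrence in $j$ by reading Definition \ref{D:wbar}(II) from the other end — the point being that moving the upper index $f+j$ outward on the downward side is governed by the within-stratum $(b)$-recurrence exactly as moving the upper index $f+j$ outward on the upward side is. Since the two sequences agree at $j=0,1$ and obey the same recurrence, they agree for all $j$.

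The main obstacle is bookkeeping at the threshold: the downward array in Definition \ref{D:wbar}(II) indexes its ``crossing'' data at $f-2,f-3$ rather than symmetrically to $f+1,f+2$, and the linear combination in (II)(2) involves $w_{j+2}^*(b),w_{j+1}^*(b)$ with a shift, so one must carefully track the off-by-one shifts to confirm that $\overline{w}_{n-1,m-1}$ really does produce the same initial vector $\mathbf{W}$ (in the language of \eqref{E:Q}--\eqref{E:B3}) as $\overline{w}_{m,n}$ does. A clean way to organize this is to prove, as the true content, that the $2\times1$ vectors $\bigl(\overline{w}_{f-\ell,f},\overline{w}_{f-\ell,f+1}\bigr)^{\!\top}$ and $\bigl(\overline{w}_{f-1,f-\ell-1},\overline{w}_{f,f-\ell-1}\bigr)^{\!\top}$ coincide for every $\ell\ge1$ — a single identity between two explicit linear combinations of $w_\ell^*(a),w_{\ell+1}^*(a)$, verifiable by direct calculation — and then let the shared recurrences propagate the equality in both outward directions. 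Everything beyond that threshold identity is routine induction, so no serious analytic difficulty arises; the risk is purely in getting the indices right.
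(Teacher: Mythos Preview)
Your overall strategy---induction along one of the two ``stratum depths'' after disposing of the within-stratum cases---is sound and is essentially the dual of the paper's argument: the paper fixes $j$ and inducts on $\ell$, whereas you fix $\ell$ and induct on $j$. However, as written there is a genuine gap at the threshold step. You take base cases $j=0,1$ and then claim that for the step $j\ge 1\to j+1$ the upward side satisfies the $\beta_b,x_b$ recurrence ``(or (I)(3) when $j=1$)''. But (I)(3) reads $\overline{w}_{m,f+2}=\beta(a,b)\overline{w}_{m,f+1}-x(a,b)\overline{w}_{m,f}$, with the \emph{mixed} coefficients, not $\beta_b,x_b$; and the downward side $\overline{w}_{f+j-1,f-\ell-1}$ does not satisfy that mixed recurrence in $j$ across $j=0,1,2$ (for $\ell=1$ a direct check with (II)(2) and $\overline{w}_{f-1,f-2}=1$ shows the relation fails unless $w_0^*(b)=1$). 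There is also an indexing slip: the formula $\frac{1-b}{1-a}w_{\ell+1}^*(a)+\frac{b-a}{1-a}w_\ell^*(a)$ you quote for $j=0$ is actually Definition~\ref{D:wbar}(I)(2), the $j=1$ value; by (I)(1), $\overline{w}_{f-\ell,f}=w_\ell^*(a)$.

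The fix is to take base cases $j=1,2$ (the $j=0$ case is already the within-stratum identity $w_\ell^*(a)=w_\ell^*(a)$). Then (I)(4) gives the $\beta_b,x_b$ recurrence on the upward side for $j\ge 2$, and on the downward side one observes that, for fixed $n\ge f$, Definition~\ref{D:wbar}(II)(3)--(4) expresses $\overline{w}_{n,f-\ell-1}$ as a fixed (in $n$) linear combination of $\overline{w}_{n,f-1}=w_{n-f+1}^*(b)$ and $\overline{w}_{n,f-2}$, each of which satisfies the $\beta_b,x_b$ recurrence in $n$; hence so does the combination. This is precisely the auxiliary claim you hand-wave as ``reading (II) from the other end,'' and once it is stated explicitly your scheme goes through. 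The paper's route, inducting on $\ell$, avoids this extra step: Proposition~\ref{P:wformula} already exhibits $\overline{w}_{f-\ell,f+j}$ as a linear combination of $w_\ell^*(a),w_{\ell+1}^*(a)$ (so the $\beta_a,x_a$ recurrence in $\ell$ is automatic), while (II)(4) gives the same recurrence directly on the downward side; the base cases $\ell=1,2$ then reduce to the single identity $-(1-b)^2r^2z^2 q_j^*(b)=w_{j+1}^*(b)-w_j^*(b)$.
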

\begin{proof}
Notice that the  lemma holds in the initial cases $n-m=1,2$ by \eqref{E:wbarIC}. Also, if  $f\le m<n$ or $1\le m<n \le f$ then the statement holds by (I)(1) and (II)(1) in Definition~\ref{D:wbar}. So consider now $\overline{w}_{f-\ell,f+j}$ for $1\le \ell <f$ and $j\ge 1$. Our method is to prove that the statement:
$\mbox{(H)}_{\ell,j} \ \  \overline{w}_{f-\ell,f+j}=\overline{w}_{f+j-1,f-\ell-1},$
holds for both the initial cases $\ell =1$ and $\ell=2$, and all $ j\ge 1.$ 
\par  
We first establish $\mbox{(H)}_{\ell,j}$ for $\ell=1$ and all $j\ge 1$. On the one hand, write $\overline{w}_{f-1,f+j}$ by \eqref{E:wformulaup}  with $\ell=1$, and on the other hand, write $\overline{w}_{f+j-1,f-2}$  by Definition 
\ref{D:wbar}(II)(2), as follows.
\begin{equation}\label{E:wbarIDbasecase}
\begin{array}{c}
\overline{w}_{f-1,f+j}=d_1(1)q_j^{*}(b)+d_2(1)w_j^{*}(b); \\ \overline{w}_{f+j-1,f-2}=\frac{1-a}{1-b}w_{j+1}^{*}(b)+\frac{a-b}{1-b}w_j^{*}(b).
\end{array}
\end{equation}
By \eqref{E:Q}, \eqref{E:M} and direct calculation, we have that $d_1(1)=\mu_{1,1}w_1^{*}(a)+\mu_{1,2}w_2^{*}(a)=-(1-a)(1-b)r^2z^2,$ and $d_2(1)=\mu_{2,1}w_1^{*}(a)+\mu_{2,2}w_2^{*}(a)=1$.
Therefore, by substitution into \eqref{E:wbarIDbasecase}, we find that the two expressions in \eqref{E:wbarIDbasecase} are equal if and only if 
\begin{equation}\label{E:Hinitial}
  -(1-b)^2r^2z^2q_j^{*}(b)=w_{j+1}^{*}(b)-w_j^{*}(b).
\end{equation}
By direct computation we check that \eqref{E:Hinitial} is true at both $j=1$ and $j=2$. Thus since $\{q_j^{*}(b)\}$ and $\{w_j^{*}(b)\}$ each satisfy the same Fibonacci recurrence, \eqref{E:Hinitial} holds for all $j\ge 1$.
\par
Next we establish that $\mbox{(H)}_{\ell,j}$ holds with $\ell=2$ and all $j\ge 1$. 
Write $\overline{w}_{f-2,f+j}$ by \eqref{E:wformulaup}  with $\ell=2$, and write $\overline{w}_{f+j-1,f-3}$  by Definition \ref{D:wbar}(II)(3), as follows. 
\begin{equation}\label{E:wbarIDbasecase2}
\begin{array}{l}
\mbox{(i)} \ \ \  \overline{w}_{f-2,f+j}=d_1(2)q_j^{*}(b)+d_2(2)w_j^{*}(b); \\\mbox{(ii)} \ \  \overline{w}_{f+j-1,f-3}=\beta(b,a)\overline{w}_{f+j-1,f-2}-x(b,a)\overline{w}_{f+j-1,f-1},
\end{array}
\end{equation}
with $  \overline{w}_{f+j-1,f-2} = \frac{1-a}{1-b}w_{j+1}^{*}(b)+\frac{a-b}{1-b}w_{j}^{*}(b); \  \ \overline{w}_{f+j-1,f-1}=w_{j}^{*}(b).$ By \eqref{E:Q} and \eqref{E:M} we directly verify that
$d_1(2)= -(1-a)(1-b)r^2z^2\beta(b,a);  \  d_2(2) = \beta(b,a)-x(b,a).$
To verify that the  expressions (i) and (ii) in \eqref{E:wbarIDbasecase2} are equal, we substitute $d_1(2)$ and $d_2(2)$, and obtain, after a little algebra in which $x(b,a)x_j^{*}(b)$ cancels on the two sides,  the condition: 
$ -(1-b)^2r^2z^2\beta(b,a)q_j^{*}(b)=\beta(b,a)\left(w_{j+1}^{*}(b)-w_j^{*}(b)\right), \ \forall \ j\ge 1.$ 
This is obviously equivalent to the condition \eqref{E:Hinitial}. Hence the two expressions in \eqref{E:wbarIDbasecase2} are equal for all $j\ge 1$,  so $\mbox{(H)}_{\ell,j}$ holds also at $\ell=2$ for all $j\ge 1$.
\par
Finally, fix any $j\ge1$. We appeal to \eqref{E:M} and \eqref{E:wformulaup} and  to Definition \ref{D:wbar}(II)(4), to obtain, for any $\ell\ge 3$,
\begin{equation}\label{E:wbarIDbasecase3}
\begin{array}{l}
\overline{w}_{f-\ell,f+j}= \left(\mu_{1,1}w_{\ell}^{*}(a)+\mu_{1,2}w_{\ell+1}^{*}(a)\right)q_j^{*}(b)+\left(\mu_{2,1}w_{\ell}^{*}(a)+\mu_{2,2}w_{\ell+1}^{*}(a)\right)w_j^{*}(b);\\  \overline{w}_{f+j-1,f-\ell-1}=\beta_a \overline{w}_{f+j-1,f-\ell}-x_a \overline{w}_{f+j-1,f-\ell+1}.
\end{array}
\end{equation} 
For any  $\ell\ge 1$, write $u_{\ell}:=\overline{w}_{f-\ell,f+j}$ and $v_{\ell}:=\overline{w}_{f+j-1,f-\ell-1}$ for the two lines of \eqref{E:wbarIDbasecase3}. Since $u_{\ell}$ is a linear combination of two successive terms of the sequence  $\{w_{\ell}^{*}(a)\}$, it follows that,$\{u_{\ell}\}$ itself satisfies the recursion $u_{\ell+1}=\beta_au_{\ell}-x_au_{\ell-1}, \ell\ge 2$. But also $\{v_{\ell}\}$ satisfies the same recurrence. Moreover, we proved that $\mbox{(H)}_{\ell,j}$ holds for $\ell=1$ and $\ell=2$, so we have $u_{1}=v_{1}$, and $u_2=v_2$. Therefore we have $u_{\ell}=v_{\ell}$ for all $\ell\ge 1$.  Thus by \eqref{E:wbarIDbasecase3}, $\mbox{(H)}_{\ell,j}$ is proved for all $\ell\ge 1$. Since $j\ge 1$ was arbitrary, $\mbox{(H)}_{\ell,j}$ is true  $ \forall\ \ell, j\ge 1$. \qed
\end{proof}
\begin{lemma}\label{L:C}
The following identities hold. \
\begin{enumerate}
\item $ \  \overline{w}_{f-\ell,f+j}[\mathbf{1}]=
a^{\ell}b^{j-1}\Pi_{f-\ell,f+j},\  \forall \  \ell\ge 1, \ j\ge 1$. \\
\item $  \   \overline{w}_{f+j,f-\ell}[\mathbf{1}]=
a^{\ell-1}b^{j}\Pi_{f+j,f-\ell},  \mbox{  } \forall \  \ell\ge 2,\ j\ge 0$.\\
\item $ \ q_{\ell}^{*}(a)[\mathbf{1}]=\ell a^{\ell-1},$ 
$ \ \  w_{\ell}^{*}(a)[\mathbf{1}]=
a^{\ell-1}\left( \ell - (\ell -1)a\right);  \ \forall \  \ell \ge 1$.
\end{enumerate}
\end{lemma}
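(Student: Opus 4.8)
The plan is to prove part 3 first and then bootstrap parts 1 and 2 from it by induction along the ``free'' index, exploiting that evaluation at $\mathbf{1}=(r,y,z)=(1,1,1)$ collapses every Fibonacci-type recurrence to one with a double characteristic root. First I would record the evaluations needed: by \eqref{E:tauxbetahomogeneous}, $\beta_a[\mathbf{1}]=2a$, $x_a[\mathbf{1}]=a^2$, $\omega_a[\mathbf{1}]=a(2-a)$, $\tau_a[\mathbf{1}]=1$; by \eqref{E:tauxbeta}, since the corrective terms carry a factor $(1-y)^2$, $\beta(a,b)[\mathbf{1}]=\beta_b[\mathbf{1}]=2b$, $\beta(b,a)[\mathbf{1}]=\beta_a[\mathbf{1}]=2a$, $x(a,b)[\mathbf{1}]=b^2$, $x(b,a)[\mathbf{1}]=a^2$; and from \eqref{E:qwstar0} and the definition of $w_0^{*}$, $q_0^{*}(a)[\mathbf{1}]=0$ (the factor $(1-y)$), $q_1^{*}(a)[\mathbf{1}]=1$, $w_0^{*}(a)[\mathbf{1}]=(\beta_a-\omega_a)[\mathbf{1}]/x_a[\mathbf{1}]=(2a-a(2-a))/a^2=1$, $w_1^{*}(a)[\mathbf{1}]=1$.

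For part 3, at $\mathbf{1}$ both recurrences read $v_{n+1}=2a\,v_n-a^2 v_{n-1}$, with characteristic polynomial $(t-a)^2$. Solving with the initial data above (equivalently, checking $\ell=1,2$ directly and iterating) gives $q_\ell^{*}(a)[\mathbf{1}]=\ell a^{\ell-1}$ and $w_\ell^{*}(a)[\mathbf{1}]=a^{\ell-1}(\ell-(\ell-1)a)$; the latter is precisely the claim already invoked after Lemma \ref{L:homogeneousinterlacing}.

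For part 1, I would fix $\ell\ge 1$ and induct on $j\ge 1$. The base case $j=1$ uses Definition \ref{D:wbar}(I)(2): substituting part 3 into $\overline{w}_{f-\ell,f+1}=\tfrac{1-b}{1-a}w_{\ell+1}^{*}(a)+\tfrac{b-a}{1-a}w_{\ell}^{*}(a)$ and simplifying gives $\overline{w}_{f-\ell,f+1}[\mathbf{1}]=a^{\ell-1}\bigl(a+\ell b(1-a)\bigr)=a^{\ell}\Pi_{f-\ell,f+1}$, using $\Pi_{f-\ell,f+1}=1+\ell(b/a)-\ell b$ from Proposition \ref{P:Pi}(I.1). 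The base case $j=2$ uses Definition \ref{D:wbar}(I)(3) together with $\overline{w}_{f-\ell,f}=w_\ell^{*}(a)$: at $\mathbf{1}$ this equals $2b\,\overline{w}_{f-\ell,f+1}[\mathbf{1}]-b^{2}w_\ell^{*}(a)[\mathbf{1}]$, which a short computation identifies with $a^{\ell}b\,\Pi_{f-\ell,f+2}$. For the inductive step $j\ge 3$, Definition \ref{D:wbar}(I)(4) evaluated at $\mathbf{1}$ reads $\overline{w}_{f-\ell,f+j}[\mathbf{1}]=2b\,\overline{w}_{f-\ell,f+j-1}[\mathbf{1}]-b^{2}\,\overline{w}_{f-\ell,f+j-2}[\mathbf{1}]$; feeding in the induction hypothesis, the target $\overline{w}_{f-\ell,f+j}[\mathbf{1}]=a^{\ell}b^{j-1}\Pi_{f-\ell,f+j}$ reduces, after dividing by $a^{\ell}b^{j-3}$, to $\Pi_{f-\ell,f+j}-2\Pi_{f-\ell,f+j-1}+\Pi_{f-\ell,f+j-2}=0$, which holds because $j\mapsto\Pi_{f-\ell,f+j}=\bigl[\ell(b/a)-(\ell-1)b\bigr]+j(1-b)$ is affine (Proposition \ref{P:Pi}(I.1)).

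Part 2 runs in parallel, inducting on $\ell\ge 2$ for fixed $j\ge 0$: the base cases come from Definition \ref{D:wbar}(II)(2) at $\ell=2$ and Definition \ref{D:wbar}(II)(3) at $\ell=3$ (using $\overline{w}_{f+j,f-1}=w_{j+1}^{*}(b)$ from Definition \ref{D:wbar}(II)(1), together with $\beta(b,a)[\mathbf{1}]=2a$, $x(b,a)[\mathbf{1}]=a^2$), while the step for $\ell\ge 4$ uses Definition \ref{D:wbar}(II)(4), collapsing again to the vanishing second difference of $\ell\mapsto\Pi_{f+j,f-\ell}=\bigl[(j+1)-(j-1)b\bigr]+\ell\,b(1-a)/a$ (Proposition \ref{P:Pi}(I.2)). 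The only real labor is the handful of base-case identities in $a,b$, each a routine simplification (computer-algebra checkable; cf. \cite{Mor2018}); I expect that keeping the index ranges and base cases organized, rather than any single computation, will be the main (and modest) obstacle, since every inductive step is automatic once one observes that at $\mathbf{1}$ the $\overline{w}$'s satisfy a second-order linear recurrence whose double-root structure annihilates arithmetic progressions, and each $\Pi$ along a fixed argument is such a progression.
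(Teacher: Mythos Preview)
Your proposal is correct. For part~3 you and the paper do essentially the same thing: evaluate $\beta_a,x_a$ at $\mathbf{1}$ to collapse the Fibonacci recurrence to one with a double root~$a$; the paper phrases this via the closed form \eqref{E:qwclosed} and a limit $\alpha\to 0$, you via the characteristic polynomial $(t-a)^2$, but the content is identical.

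Where you diverge is in parts~1 and~2. The paper does not induct on $j$ or $\ell$ at all. For part~1 it invokes Proposition~\ref{P:wformula}, which expresses $\overline{w}_{f-\ell,f+j}$ as $d_1(\ell)q_j^{*}(b)+d_2(\ell)w_j^{*}(b)$, then computes $d_1(\ell)[\mathbf{1}]=-(1-a)(1-b)\ell a^{\ell-1}$ and $d_2(\ell)[\mathbf{1}]=a^{\ell-1}(\ell-(\ell-1)a)$ directly from the matrix $M=Q(b)^{-1}B$ of \eqref{E:M}, substitutes the values of $q_j^{*}(b)[\mathbf{1}]$ and $w_j^{*}(b)[\mathbf{1}]$ from part~3, and simplifies once. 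For part~2 the paper is even shorter: it applies the symmetry Lemma~\ref{L:wbarID} ($\overline{w}_{m,n}=\overline{w}_{n-1,m-1}$) to reduce to part~1, together with the observation that $\Pi_{f-\ell,f+j}=\Pi_{f+j-1,f-\ell-1}$ from Proposition~\ref{P:Pi}. Your route is more elementary in that it avoids the matrix machinery of \eqref{E:Q}--\eqref{E:M} and Proposition~\ref{P:wformula} entirely, working straight from the recursive Definition~\ref{D:wbar}; the price is a handful of base-case verifications and a duplicated induction for part~2 where the paper gets it for free from symmetry. Both arguments ultimately rest on the same structural fact you identify at the end: at $\mathbf{1}$ the recurrences have double roots, and the $\Pi$'s are affine in the free index.
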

\begin{proof} At $(r,y,z)=\mathbf{1}$ we have $\beta_a=2a$ and $x_a=a^2$. Thus $\alpha=0$ in \eqref{E:qwclosed}. 
Therefore by \eqref{E:qwclosed}, $q_{\ell}^{*}(a)[\mathbf{1}]=\lim_{\alpha\to 0}\frac{2^{-\ell}}{\alpha}\{(2a+\alpha)^{\ell}-(2a-\alpha)^{\ell}\}=\ell a^{\ell-1}$. Thus, by the second formula of  \eqref{E:qwclosed}, we obtain $w_{\ell}^{*}(a)[\mathbf{1}]$  by $x_a[\mathbf{1}]=a^2$,  
so \emph{3} is proved. Now apply \eqref{E:wformulaup}, also at $(r,y,z)=\mathbf{1}$. By \eqref{E:M} and direct calculation, 
$d_1(\ell)[\mathbf{1}]=-(1-a)(1-b)\ell a^{\ell-1}$, and $d_2(\ell)[\mathbf{1}]=
a^{\ell-1}[\ell-(\ell-1)a]$. Now plug in $q_{j}^{*}(b)[\mathbf{1}]$
and $w_{j}^{*}(b)[\mathbf{1}]$ 
from \emph{3}, into \eqref{E:wformulaup} to obtain formula \emph{1}  from Proposition \ref{P:wformula} after direct simplification. 
The proof of \emph{2} follows from  \emph{1} and Lemma \ref{L:wbarID}, in view of Definition \ref{D:Pi}. 
\qed 
\end{proof}
\subsection{Closed formula for $g_{m,n}$.}\label{S:gformula}
\begin{proposition}\label{P:gProp} 
We have the following formulae for $g_{m,n}$. \\ 
I. The formulae for upward between--strata cases, $j\ge 1$ and $\ell\ge 2$:
\begin{enumerate}
\item $\mbox{ } g_{f-\ell,f+j}=\frac{\omega(a,a)}{2-a}r z^{j+\ell}\tau(a,b)[a\tau(a,a)]^{\ell-2}[b\tau(b,b)]^{j-1}\left(a\Pi_{f-\ell,f+j}/\overline{w}_{f-\ell,f+j}\right)$, 
\item $\mbox{ }g_{f-1,f+j}=\frac{\omega(a,b)}{a+b-ab} r z^{j+1}[b\tau(b,b)]^{j-1}\left(a\Pi_{f-1,f+j}/ \overline{w}_{f-1,f+j}\right)$;
\end{enumerate}
II. The formulae for downward between--strata cases, $j\ge 1$ and $\ell\ge 2$:
\begin{enumerate}
\item $\mbox{ } g_{f+j,f-\ell}=\frac{\omega(b,b)}{2-b} r z^{j+\ell} \tau(a,b)[a\tau(a,a)]^{\ell-2}[b\tau(b,b)]^{j-1}\left(a\Pi_{f+j,f-\ell}/\overline{w}_{f+j,f-\ell}\right)$,  
\item $\mbox{ } g_{f,f-\ell}=\frac{\omega(a,b)}{a+b-ab }r z^{\ell}[a\tau(a,a)]^{\ell-2}\left(a\Pi_{f,f-\ell}/\overline{w}_{f,f-\ell}\right)$;
\end{enumerate}
III. The formulae for within stratum cases:
\begin{enumerate}
\item  $\mbox{ }g_{m,m+\ell}=g_{m+\ell,m}=\frac{\omega(a,a)}{2-a} r z^{\ell}[a\tau(a,a)]^{\ell-2}\left(\frac{a}{b}\Pi_{m,m+\ell}/w_{\ell}^{*}(a)\right),\\\mbox{ } m<m+\ell\le f-1$; 
\begin{enumerate} 
\renewcommand{\labelenumii}{\theenumii}
\renewcommand{\theenumii}{\theenumi.\arabic{enumii}}
\item $\mbox{  }g_{f-\ell,f}=\frac{\omega(a,a)}{2-a} r z^{\ell} [a\tau(a,a)]^{\ell-2} \left(\frac{a}{b}\Pi_{f-\ell,f}/w_{\ell}^{*}(a)\right) $, $\ell\ge 1$;
\end{enumerate}
\item $\mbox{ }g_{m,m+j}=g_{m+j,m}=\frac{\omega(b,b)}{2-b} r z^{j}[b\tau(b,b)]^{j-2}\left(\Pi_{m,m+j}/w_{j}^{*}(b)\right) , \\ \mbox{ } f \le m<m+j$; 
\begin{enumerate} 
\renewcommand{\labelenumii}{\theenumii}
\renewcommand{\theenumii}{\theenumi.\arabic{enumii}}
\item $\mbox{ }g_{f+j,f-1}=\frac{\omega(b,b)}{2-b} r z^{j+1} [b\tau(b,b)]^{j-1} \left(\Pi_{f+j,f-1}/w_{j+1}^{*}(b)\right) $, $j\ge 1$.
\end{enumerate}
\end{enumerate}
Furthermore, the following identity holds for all $n\ge m+2$, where $\lambda_{m,n}$ is defined by \eqref{E:lambdamj}.
\begin{equation}\label{E:interlacinglambda}
 \lambda_{m,n}
=\frac{\overline{w}_{m,n}\overline{w}_{m+1,n+1}}{\overline{w}_{m,n+1}\overline{w}_{m+1,n}} .
\end{equation}
\end{proposition}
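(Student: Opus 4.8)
The plan is to establish the closed formulae for $g_{m,n}$ by induction on $n-m$, exactly mirroring the organization of the denominators $\overline{w}_{m,n}$ in Definition \ref{D:wbar}, and then to obtain the identity \eqref{E:interlacinglambda} as a corollary of the interlacing bracket identities of Proposition \ref{P:interlacing}. First I would record the base cases: for $n-m=2$ the initial values are given by \eqref{E:ginitial}, namely $g_{m,m+2}=rz^2$ (and likewise downward), and one checks these agree with the proposed formulae since $\overline{w}_{m,m+2}=\omega[a,b]_m^+$ by \eqref{E:wbarIC} and $\Pi_{m,m+2}$ is read off from Proposition \ref{P:Pi}. For the within-stratum cases III.1 and III.2, the formulae follow directly from the homogeneous result \eqref{E:ghomogeneous} already proved in Section \ref{S:wbar} — one only has to match the normalizing constant $C_{\ell,a}$ against $\frac{\omega_a}{2-a}(a/b)\Pi_{m,m+\ell}$ using $\Pi_{m,m+\ell}=\frac{b}{a}\{\ell-(\ell-1)a\}$ from Proposition \ref{P:Pi}(II.1) and $w_\ell^*(a)[\mathbf 1]=a^{\ell-1}(\ell-(\ell-1)a)$ from Lemma \ref{L:C}(3), so that $g_{m,m+\ell}[\mathbf 1]=1$.

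Next I would run the induction step across the stratum threshold using the closed recurrences \eqref{E:G2up} and \eqref{E:G2down}. Fix the upward direction: assuming the formulae hold for all pairs at distance $\le k$, apply $g_{m,n+1}=c_1 g_{m,n} g_{m+1,n+1}(g_{m+1,n})^{-1}\lambda_{m,n}$. Substituting the inductive expressions, the $rz$-powers and $\tau$-powers combine cleanly (here one uses that $\tau(a,b)$ is symmetric, and that the number of $[a\tau_a]$ versus $[b\tau_b]$ factors tracks how many steps lie in each stratum), and the four $\Pi$-factors together with the factor $\rho_{m,n}\rho_{n,m}$ hidden inside $\lambda_{m,n}$ reduce — via Definition \ref{D:Pi} and the identity $\Pi_{m+1,n+1}=\Pi_{n,m}$ used in the proof of Proposition \ref{P:Pi} — to the single factor $a\Pi_{m,n+1}$. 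The denominator collapses to $\overline{w}_{m,n+1}$ precisely because of the interlacing identity: indeed $\lambda_{m,n}$ must be shown to equal $\overline{w}_{m,n}\overline{w}_{m+1,n+1}/(\overline{w}_{m,n+1}\overline{w}_{m+1,n})$, which is \eqref{E:interlacinglambda}, so that the product $g_{m,n}g_{m+1,n+1}(g_{m+1,n})^{-1}\lambda_{m,n}$ telescopes in the $\overline{w}$'s to leave $1/\overline{w}_{m,n+1}$. One checks the constant by evaluating at $(r,y,z)=\mathbf 1$ using Lemma \ref{L:C}, which forces the constant to make $g_{m,n+1}[\mathbf 1]=1$. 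The boundary sub-cases I.2, II.2 (distance involving $f-1$ or $f$) and the "one-step-across" transitions III.1.1, III.2.1 are handled by the same substitution but reading the initial segment of Definition \ref{D:wbar}(I)(2)--(3) or (II)(2)--(3); here $\overline{w}_{f-1,f+1}=\omega(a,b)$ from \eqref{E:wbarIC} is the key consistency check, already verified in Section \ref{S:wbar}. The downward cases are literally the same argument with $a$ and $b$ interchanged, appealing to \eqref{E:G2down}.

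Finally, the identity \eqref{E:interlacinglambda} itself I would prove independently of the $g$-formulae — in fact it is what makes the induction close, so logically it should come first. By \eqref{E:lambdamj}, $\lambda_{m,n}^{-1}=1-4\gamma_m\gamma_n\rho_{m,n}\rho_{n,m}k[a,b]_n^- k[a,b]_m^+ g_{m,n}g_{n,m}$; using Definition \ref{D:Pi} to write $\rho_{m,n}\rho_{n,m}$ in terms of $\Pi$'s, and the (inductively available, at smaller distance) formulae for $g_{m,n}$ and $g_{n,m}$, the product $g_{m,n}g_{n,m}$ contributes $\overline{w}_{m,n}^{-2}$ up to explicit polynomial factors, and $4\gamma_m\gamma_n k[a,b]_m^+ k[a,b]_n^-$ supplies exactly the $(1-a)(1-b)r^2z^4 x$ combination appearing in Proposition \ref{P:interlacing}. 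One then recognizes $\overline{w}_{m,n}^2-[\overline{w}]_{m,n}\cdot(\text{the rest})=\overline{w}_{m,n+1}\overline{w}_{m+1,n}$ by the definition \eqref{E:bracketwbar} of the interlacing bracket, whence $\lambda_{m,n}^{-1}=\overline{w}_{m,n+1}\overline{w}_{m+1,n}/(\overline{w}_{m,n}\overline{w}_{m+1,n+1})$, which is \eqref{E:interlacinglambda}. The main obstacle is bookkeeping: correctly matching the three distinct regimes (both endpoints in one stratum, one endpoint exactly at $f-1$ or $f$, endpoints straddling the threshold with $\ell,j\ge 2$) against the correspondingly split Definition \ref{D:wbar} and Proposition \ref{P:interlacing}, and keeping the $\tau$- and $z$-exponents and the $a/b$ normalizing ratios exactly consistent — this is where the "direct calculation" in \emph{Mathematica} referenced in Section \ref{S:elements} does the verification of the several boundary identities.
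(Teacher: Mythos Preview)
Your overall strategy matches the paper's: a simultaneous induction on $n-m$, where at each stage one first proves \eqref{E:interlacinglambda} for $\lambda_{m,n}$ (using the inductively known $g$-formulae at distance $n-m$ together with the bracket identities of Proposition \ref{P:interlacing}), and then uses \eqref{E:G2up}--\eqref{E:G2down} to advance the $g$-formulae to distance $n-m+1$. The within-stratum cases III are indeed already covered by the homogeneous result \eqref{E:ghomogeneous}.

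There is, however, a concrete slip in your derivation of \eqref{E:interlacinglambda} that would prevent the argument from closing as written. You say the product $g_{m,n}g_{n,m}$ contributes $\overline{w}_{m,n}^{-2}$ and then try to recognize ``$\overline{w}_{m,n}^{2}-[\overline{w}]_{m,n}\cdot(\text{the rest})=\overline{w}_{m,n+1}\overline{w}_{m+1,n}$''. But the denominator of $g_{n,m}$ is $\overline{w}_{n,m}$, not $\overline{w}_{m,n}$, and in the full model these are different polynomials; moreover the bracket definition \eqref{E:bracketwbar} reads $[\overline{w}]_{m,n}=\overline{w}_{m,n}\overline{w}_{m+1,n+1}-\overline{w}_{m,n+1}\overline{w}_{m+1,n}$, so no identity of the form you wrote holds. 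The missing ingredient is Lemma \ref{L:wbarID}, which gives $\overline{w}_{n,m}=\overline{w}_{m+1,n+1}$. With this in hand the computation \eqref{E:lambdainduction} yields $1-\lambda_{m,n}^{-1}=[\overline{w}]_{m,n}/(\overline{w}_{m,n}\overline{w}_{m+1,n+1})$, and then \eqref{E:interlacinglambda} follows directly from \eqref{E:bracketwbar}. The paper also explicitly verifies the base cases at $n-m=2$ for \eqref{E:interlacinglambda} (four cases according to the position of $m$ relative to $f$) and at $n-m=3$ for the between-strata $g$-formulae I.1--II.2 by direct calculation before launching the induction; you should not skip these, since the recurrences \eqref{E:G2up}--\eqref{E:G2down} are only valid for $n-m\ge 3$.
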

\begin{remark}\label{R:gconstant}
Since $\tau(a,b)[\mathbf{1}]=1$ for all $a$ and $b$, one easily checks by  Definition \ref{D:Pi} and Lemma \ref{L:C} that the formulae of Proposition \ref{P:gProp} all yield the evaluation $g_{m,n}[\mathbf{1}]=1$. The factor $\Pi_{m,n}$ appears in Lemma \ref{L:C} the same as it does in the statements \emph{I-II}, so these factors cancel at $\mathbf{1}$.  
\end{remark}
\begin{remark}\label{R:ghomogeneous}
All formulae in \emph{III} hold by \eqref{E:ghomogeneous} for the homogeneous case. For example, in the statement \emph{III.1}, we have  $\frac{a}{b}\Pi_{m,m+\ell}=\ell-(\ell-1)a$, so there is no dependence on $b$. 
\end{remark}
\begin{proof}[Proposition \ref{P:gProp}] Recall by \eqref{E:ginitial} that  $g_{m,m+2}=g_{m+2,m}=rz^2$. One can easily check by Definitions \ref{D:Pi} and \eqref{E:wbarIC} that in each of \emph{I.1} with $j=1$, and  \emph{II.2} with $\ell =2$, the formulae reduce to $rz^2$.
By \eqref{E:G2up}--\eqref{E:G2down}, we must calculate a term $\lambda_{m,n}$ defined by \eqref{E:lambdamj}. The term $\lambda_{m,n}$ is the same in both \eqref{E:G2up} and \eqref{E:G2down}, so we only consider $m<n$ in \eqref{E:lambdamj}. The structure of the proof is to first establish \eqref{E:interlacinglambda} for $n=m+2$ and to establish the initial cases $n-m=3$ of the statements \emph{I--II} of the proposition.
Following this, an induction step will be established for all cases at once, wherein an inductive step for \eqref{E:interlacinglambda} shall be the main stepping stone of the proof.
\par
Thus consider first $n:=m+2$ in \eqref{E:lambdamj}. We consider 4 cases: (i) $n\le f-1$; (ii) $ m=f-2,\ n=f$; (iii) $m=f-1,\  n=f+1$; (iv) $m\ge f$. We verify by \eqref{E:khomega}--\eqref{E:gamma}, Definition \ref{D:Pi}, Proposition \ref{P:Pi},  Proposition \ref{P:interlacing}, and direct calculation, that in all cases (i)--(iv),  $\ \lambda_{m,n}=\frac{\omega[a,b]_{m}^{+}\omega[a,b]_{m+1}^{+}}{\omega[a,b]_{m}^{+}\omega[a,b]_{m+1}^{+} - [\overline{w}]_{m,n}} $. Verification of this initial identity by direct calculation suffices for \eqref{E:interlacinglambda}, since for the numerator we have by   \eqref{E:wbarIC} that $\overline{w}_{m,n}=\omega[a,b]_{m}^{+}$ and $\overline{w}_{m+1,n+1}=\omega[a,b]_{m+1}^{+}$, and since for the denominator we have by Definition \eqref{E:bracketwbar} that
$\overline{w}_{m,n}\overline{w}_{m+1,n+1}-[\overline{w}]_{m,n}=\overline{w}_{m+1,n}\overline{w}_{m,n+1}$.
\par
We turn to the initial conditions for \emph{I--II}. There are again four cases to consider. We conform with the notation of \eqref{E:G1down} and \eqref{E:G1up}. For the upward cases we write the lower index $m$ and the upper index $m+3$. For the downward cases we write the upper index $m+2$ and the lower index $m-1$. The cases are (I.1) $m=f-2, \ m+3= f+1$; (I.2) $m=f-1, \ m+3= f+2$; (II.1) $m+2=f+1, \ m-1= f-2$; (II.2) $m+2=f, \ m-1= f-3$.
We use direct calculation of $g_{m,m+3}$ or $g_{m+2,m-1}$ for the upward and downward cases, respectively. Besides the formulae  \eqref{E:G1down} and \eqref{E:G1up}, we use  $\lambda_{m,m+2}$  given by \eqref{E:interlacinglambda}, where $\overline{w}_{m+1,m+2}=1$ by definition \eqref{E:wbarIC}. Since the denominator of $\lambda_{m,m+2}$ in each case is $\overline{w}_{m,m+3}=\overline{w}_{m+2,m-1}$ by Lemma \ref{L:wbarID}, we  compute $p_{m,m+3}:=(1/c)g_{m,m+3}\overline{w}_{m,m+3}$ and $p_{m+2,m-1}:=(1/c)g_{m+2,m-1}\overline{w}_{m,m+3}$ in the upwards and downwards cases respectively. 
Schematically, since $g[\mathbf{1}]=1$, we have $p/p[\mathbf{1}]=g\overline{w} /\overline{w} [\mathbf{1}]=numerator/\overline{w} [\mathbf{1}]$, where $numerator$ stands for the stated formula without the denominator $\overline{w}$. By cancellation of the $\Pi$--factors as in Remark \ref{R:gconstant}, 
we match $p/p[\mathbf{1}]$ with $(numerator/\Pi)/(\overline{w}[\mathbf{1}]/\Pi)$ for verification by direct calculation.
\par
We now proceed by induction on all cases of the proposition at once, where we assume that all statements hold for $g_{m,n}$ and $g_{n,m}$ with $2\le n-m\le  k$, for some $k\ge 3$. By the above we have established all the initial cases, $k=3$, for this hypothesis; as noted earlier, the case $n-m=2$ is trivial. We now apply the formulae \eqref{E:G2up} and \eqref{E:G2down} to establish an induction step in each of the upward  \emph{I.1--2} and downward \emph{II.1--2} cases respectively. We are allowed to use any of the statements of \emph{III} by Remark \ref{R:ghomogeneous}. Notice that for the range of indices we must now consider, in all cases $n\ge f$ and $m\le f-1$, so by \eqref{E:gamma}, $\gamma_m\gamma_n=(1-a)(1-b)$.
\par
Consider first \emph{I.1}. Let first (i) $n+1=m+k+1$, for some $m\le f-2$ and $n\ge f+1$; there is another subcase (ii) $n=f$, that we handle as a special case by direct calculation below.  We rewrite  \eqref{E:G2up} for easy reference: 
$ g_{m,n+1}
=c_1g_{m,n}g_{m+1,n+1}(g_{m+1,n})^{-1}\lambda_{m,n}.$
In the definition \eqref{E:lambdamj} we have by \eqref{E:khomega}--\eqref{E:gamma} that 
$k[a,b]_m^{+}=k(a,a)=\frac{a(2-a)}{\omega(a,a)}$, $k[a,b]_n^{-}=k(b,b)=\frac{b(2-b)}{\omega(b,b)}$.
Also, by Definition \ref{D:Pi} and Proposition \ref{P:Pi}, 
$4 \rho_{m,n}\rho_{n,m}=\frac{(b/a)}{\Pi_{m,n}\Pi_{n,m}}= \frac{ab}{\left(a\Pi_{m,n}\right)\left(a\Pi_{n,m}\right)}.$
Therefore by  \eqref{E:lambdamj} and the induction hypothesis \emph{I.1}, for $g_{m,n}$, and \emph{II.1}, for $g_{n,m}$, the expression $(\dagger) \ 1-1/ \lambda_{m.n}$, is written:
\begin{equation}\label{E:lambdainduction}
\begin{array}{l}
\frac{\gamma_m\gamma_n ab }{\left(a\Pi_{m,n}\right)\left(a\Pi_{n,m}\right)}\frac{ab(2-a)(2-b)}{\omega(a,a)\omega(b,b)}g_{m,n}g_{n,m}=
 \frac{\gamma_m \gamma_n a^2r^2 z^{2j+2\ell}b^2\tau^2(a,b)[a^2\tau^{2}_{a}]^{\ell-2}[b^2\tau^{2}_{b}]^{j-1}}{\overline{w}_{m,n}\overline{w}_{n,m}}. 
 \end{array}
\end{equation}
Now apply \eqref{E:tauxbetahomogeneous} and \eqref{E:tauxbeta} to write $x_a$, $x(a,b)=b^2z^2\tau^2(a,b)$, and $x_b$, using all but 4 powers of $z$. So the numerator of the right member of  \eqref{E:lambdainduction} is simply the interlacing bracket $[\overline{w}]_{m,n}$ of Proposition \ref{P:interlacing}.\emph{1}. Thus, after writing  $\overline{w}_{n,m} =\overline{w}_{m+1,n+1}$ by Lemma \ref{L:wbarID}, and applying the bracket definition \eqref{E:bracketwbar}, we have established that $(\dagger)$ is given by $\frac{\overline{w}_{m,n}\overline{w}_{m+1,n+1}-\overline{w}_{m,n+1}\overline{w}_{m+1,n}}{\overline{w}_{m,n}\overline{w}_{m+1,n+1}}$, so   \eqref{E:interlacinglambda} holds. 
Finally, apply  \eqref{E:G2up} and the induction hypothesis \emph{I.1}  and \eqref{E:interlacinglambda}. Since the lower index $m+1$ is the same in both the numerator and denominator of the ratio $g_{m+1,n+1}/g_{m+1,n}$, we obtain, by  \emph{I.1}  for $m+1\le f-2$, or by \emph{I.2} for $m+1=f-1$, that  $g_{m+1,n+1}/g_{m+1,n}=cz [b\tau(b,b)]\overline{w}_{m+1,n}/\overline{w}_{m+1,n+1}.$ Thus,  $g_{m,n+1}=c_1\frac{g_{m,n}g_{m+1,n+1}}{g_{m+1,n}}\frac{\overline{w}_{m,n}\overline{w}_{m+1,n+1}}{\overline{w}_{m,n+1}\overline{w}_{m+1,n}}=cz[b\tau(b,b)]\left(g_{m,n}\overline{w}_{m,n}\right)/\overline{w}_{m,n+1}$. Hence by plugging in the numerator $p_{m,n}:= g_{m,n}\overline{w}_{m,n}$ (ignoring the constants) from the induction hypothesis for \emph{I.1}, the induction step for \emph{I.1}(i), including  \eqref{E:interlacinglambda}, is complete by Remark \ref{R:gconstant}. 
\par
To recapitulate, in general, there are two steps, where for  the upward and downward cases we conform to the recurrences \eqref{E:G2up} and \eqref{E:G2down}, respectively.
\begin{enumerate}
\item Establish \eqref{E:interlacinglambda} by showing that  the numerator in the analogue of the right hand member of \eqref{E:lambdainduction} gives a bracket $[\overline{w}_{m,n}] \ ( \ =[\overline{w}_{n,m}]\ )$ from Proposition \ref{P:interlacing}, for the parameters $m,n$ of $\lambda_{m,n}$. 
\item Establish that when the induction hypothesis is applied, the condition  $(u) \ \frac{p_{m,n}p_{m+1,n+1}}{p_{m,n}[\mathbf{1}]p_{m+1,n+1}[\mathbf{1}]}-\frac{p_{m+1,n}p_{m,n+1}}{p_{m+1,n}[\mathbf{1}]p_{m,n+1}[\mathbf{1}]}=0,$ is verified for \emph{I}, and condition
$(d) \ \frac{p_{n,m}p_{n-1,m-1}}{p_{n,m}[\mathbf{1}]p_{n-1,m-1}[\mathbf{1}]}-\frac{p_{n-1,m}p_{n,m-1}}{p_{n-1,m}[\mathbf{1}]p_{n,m-1}[\mathbf{1}]}=0,$ is verified for \emph{II}. 
\end{enumerate}
For all the remaining cases of the induction steps in \emph{I--II}, including the subcase \emph{I.1}(ii), we proceed by direct calculation to check the details of the these 2 Steps.  
In Step 1 it is implicit that the factors of $\omega(a,b)$ that occur variously by substitution from factors $k(a,b)$ in the formula for $\lambda_{m,n}$, and also from the numerators of  $g_{m,n}$ and $g_{n,m}$, cancel one another in every case. This is borne out in the direct calculations, where the pattern of substitutions from the induction hypothesis is shown.
By Remark \ref{R:gconstant}, conditions (u)--(d) are equivalent to showing, for the ratio $\frac{p_{m+1,n+1}}{p_{m+1,n}}=cz\tau $, in the upward case, or $\frac{p_{n-1,m-1}}{p_{n-1,m}}=cz\tau$, in the downward case, that the factor of $z\tau$ completes the form of the numerator $p_{m,n+1}$ [resp. $p_{n,m-1}$] as one extra factor of the numerator form $p_{m,n}$ [resp. $p_{n,m}$].  Here the factor $\tau$ depends on subcases; it is $\tau(a,b)$ in subcases \emph{I.1}(ii), and in \emph{II.1}(ii): $n\ge f+2,\ m=f-1$. We show the pattern of substitutions for (u)--(d) in the direct calculations, \cite{Mor2018}. 
\qed 
\end{proof}
\subsection{Generating function of the excursion statistics.}\label{S:proofKN}
 We derive a closed formula for $K_N(a,b)$ of \eqref{E:KN}. 
Recall by \eqref{E:qwstar1} 
that $\{ q_n^{*}(a)\}$ and  $\{w_n^{*}(a)\}$ share  a common Fibonacci recurrence: $v_{n+1}=\beta_a v_n-x_a v_{n-1}$, $n\ge 1$. We extend the $\{q_{n}^{*}(a)\}$ from the homogeneous model to the full model as $\{\overline{q}_n\}$, analogous to $\{\overline{w}_{0,n}\}$ of Definition \ref{D:wbar}, except with $\overline{q}_n$ we start the stratum crossing at index $n=f$ rather than $n=f+1$.
\begin{definition} \label{D:qbar}
Define $\overline{q}_{n}=\overline{q}_{n}$ for all $n\ge 1$ by:\\ \\
$\begin{array}{l}
\mbox{(1)} \ \   \overline{q}_{n}:=q_{n}^{*}(a), \ 1\le n<f; \ \ \ \ \ \ \ \ \ \ \   \mbox{(2)} \ \  \overline{q}_{f}:=\frac{1-b}{1-a}q_{f}^{*}(a)+\frac{b-a}{1-a}q_{f-1}^{*}(a);\\ \\
\mbox{(3)} \ \overline{q}_{f+1}:=\beta(a,b)\overline{q}_{f}-x(a,b)\overline{q}_{f-1}; \ \mbox{(4)} \  \overline{q}_{f+j+1}:=\beta_b\overline{q}_{f+j}-x_b\overline{q}_{f+j-1},   j\ge1.
\end{array}$
\end{definition}
Denote the single indexed bracket (cf. Casorati determinant, \cite{Is2005})
\begin{equation}\label{E:bracketqw}
[ \overline{w},\overline{q} ]_n := \overline{w}_{n,0} \overline{q}_{n+1} - \overline{q}_{n} \overline{w}_{n+1,0}, \  n\ge 1;
\end{equation}
We note that the homogeneous case of \eqref{E:bracketqw} is written 
  $[w^{*}(a),q^{*}(a)] _{n} :=w_{n}^{*} (a) q_{n+1}^{*} (a) - q_{n}^{*} (a) w_{n+1}^{*} (a),\  n\ge 1.$
 \begin{lemma}\label{L:bracketqw}
The following identities hold:
 \begin{enumerate}
\item  $\ \  [w^{*}(a),q^{*}(a)]_n = a^2z^2x_a^{n-1},\ n\ge 1;$\\
\item $ \ \ [ \overline{w},\overline{q} ]_{f-1}=\frac{1-b}{1-a}[w^{*}(a),q^{*}(a)] _{f-1} =\frac{1-b}{1-a}a^2z^2x_a^{f-2} ;$\\
\item $\ \ [ \overline{w},\overline{q} ]_{f+j-1}=\frac{1-b}{1-a}a^2z^2x_a^{f-2} x(a,b)x_b^{j-1}, \ \forall \  j\ge 1.$
\end{enumerate}
\end{lemma}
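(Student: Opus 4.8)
All three identities are Casorati--determinant (discrete Wronskian) computations, organized as in the proof of Proposition~\ref{P:interlacing}: statement~\emph{1} is the purely homogeneous case, statement~\emph{2} is a single cross--term cancellation at the stratum threshold, and statement~\emph{3} propagates the identity above the threshold by iteration. The underlying fact is that if $\{u_n\}$ and $\{v_n\}$ both satisfy $s_{n+1}=\beta s_n-x s_{n-1}$ with the \emph{same} $\beta,x$ at index $n+1$, then $D_n:=v_nu_{n+1}-u_nv_{n+1}$ obeys $D_{n+1}=x D_n$ --- substituting the recursion into the $(n+2)$--terms of $D_{n+1}$ cancels the $\beta$--terms and leaves the factor $x$. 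For statement~\emph{1}, $\{q_n^{*}(a)\}$ and $\{w_n^{*}(a)\}$ share this recursion with coefficients $\beta_a,x_a$ by \eqref{E:qwstar1}, so $[w^{*}(a),q^{*}(a)]_n=x_a^{\,n-1}[w^{*}(a),q^{*}(a)]_1$ for $n\ge1$; it remains to compute the base value $[w^{*}(a),q^{*}(a)]_1=w_1^{*}(a)q_2^{*}(a)-q_1^{*}(a)w_2^{*}(a)=q_2^{*}(a)-y^2\omega_a$ by direct calculation from $w_1^{*}(a)=1$, $q_1^{*}(a)=y^2$, $w_2^{*}(a)=\omega_a$, $q_2^{*}(a)=\beta_aq_1^{*}(a)-x_aq_0^{*}(a)$, and the explicit forms \eqref{E:khomega}, \eqref{E:tauxbetahomogeneous}, \eqref{E:qwstar0}. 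The $r^2z^2$ and $r^2z^4$ contributions cancel and one is left with exactly $a^2z^2$; this is the identity that motivates the choice of $q_0^{*}(a)$.

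For statement~\emph{2}, expand $[\overline{w},\overline{q}]_{f-1}=\overline{w}_{f-1,0}\overline{q}_f-\overline{q}_{f-1}\overline{w}_{f,0}$ and identify the four factors: $\overline{w}_{f-1,0}=w_{f-1}^{*}(a)$ and $\overline{q}_{f-1}=q_{f-1}^{*}(a)$ by Definitions~\ref{D:wbar}(II)(1) and \ref{D:qbar}(1); $\overline{q}_f=\tfrac{1-b}{1-a}q_f^{*}(a)+\tfrac{b-a}{1-a}q_{f-1}^{*}(a)$ by Definition~\ref{D:qbar}(2); and $\overline{w}_{f,0}=\overline{w}_{1,f+1}=\tfrac{1-b}{1-a}w_f^{*}(a)+\tfrac{b-a}{1-a}w_{f-1}^{*}(a)$ by Lemma~\ref{L:wbarID} and Definition~\ref{D:wbar}(I)(2). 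Substituting, the terms proportional to $\tfrac{b-a}{1-a}q_{f-1}^{*}(a)w_{f-1}^{*}(a)$ cancel, exactly as in \eqref{E:interlacing1}, leaving $[\overline{w},\overline{q}]_{f-1}=\tfrac{1-b}{1-a}\bigl(w_{f-1}^{*}(a)q_f^{*}(a)-q_{f-1}^{*}(a)w_f^{*}(a)\bigr)=\tfrac{1-b}{1-a}[w^{*}(a),q^{*}(a)]_{f-1}$, and statement~\emph{1} supplies the closed value.

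For statement~\emph{3}, I would run the same bracket recursion with $\{u_n\},\{v_n\}$ replaced by $\{\overline{q}_n\},\{\overline{w}_{n,0}\}$, keeping track of which parameter pair governs both sequences at each index. Definition~\ref{D:qbar}(3) gives $\overline{q}_{f+1}=\beta(a,b)\overline{q}_f-x(a,b)\overline{q}_{f-1}$, while Lemma~\ref{L:wbarID} with Definition~\ref{D:wbar}(I)(3) gives $\overline{w}_{f+1,0}=\overline{w}_{1,f+2}=\beta(a,b)\overline{w}_{f,0}-x(a,b)\overline{w}_{f-1,0}$; hence the bracket recursion with the \emph{mixed} parameters $(\beta(a,b),x(a,b))$ applies at this one step and yields $[\overline{w},\overline{q}]_{f}=x(a,b)[\overline{w},\overline{q}]_{f-1}$, which is statement~\emph{3} at $j=1$ by statement~\emph{2}. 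For $n\ge f$, Definition~\ref{D:qbar}(4) and, after the same flip, Definition~\ref{D:wbar}(I)(4) show that $\{\overline{q}_n\}$ and $\{\overline{w}_{n,0}\}$ both satisfy the recursion with coefficients $\beta_b,x_b$, so $[\overline{w},\overline{q}]_{n+1}=x_b[\overline{w},\overline{q}]_n$ there; iterating from $n=f$ gives $[\overline{w},\overline{q}]_{f+j-1}=x_b^{\,j-1}x(a,b)[\overline{w},\overline{q}]_{f-1}$ for all $j\ge1$, and statement~\emph{2} finishes.

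I expect the only real difficulty to be organizational. The denominators $\overline{w}_{n,0}$ naturally belong to the ``downward'' side of Definition~\ref{D:wbar}, on which the stratum crossing is encoded near index $0$ and the defining recursions run in the \emph{second} index --- of no use for a bracket whose second index is frozen at $0$. The step that unlocks the argument is Lemma~\ref{L:wbarID}, used to rewrite $\overline{w}_{n,0}=\overline{w}_{1,n+1}$ so that the crossing lies near index $f$ in the fixed--first--index family $\{\overline{w}_{1,\,\cdot}\}$, in parallel with the construction of $\overline{q}_{\cdot}$; after that, matching up which of $(\beta_a,x_a)$, $(\beta(a,b),x(a,b))$, $(\beta_b,x_b)$ controls both sequences at a given index, together with the single direct check $[w^{*}(a),q^{*}(a)]_1=a^2z^2$, is essentially all that remains (small values of $f$ reduce to the initial conditions \eqref{E:wbarIC} and the within--stratum clauses, as with the boundary cases in Proposition~\ref{P:Pi}).
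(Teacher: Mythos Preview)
Your argument is correct. Statements~\emph{1} and~\emph{2} match the paper's proof essentially verbatim (the paper anchors statement~\emph{1} at $n=0$ via $[w^{*}(a),q^{*}(a)]_0=a^2z^2/x_a$ rather than at $n=1$, but this is immaterial).

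For statement~\emph{3} you take a genuinely different and more elementary route. The paper first establishes Lemma~\ref{L:Qformula}, the matrix representation $\overline{q}_{f+j-1}=[q_j^{*}(b)\;w_j^{*}(b)]\,M\,[q_{f-1}^{*}(a)\;q_f^{*}(a)]^{T}$, pairs it with the analogous formula \eqref{E:wn0formula} for $\overline{w}_{1,f+j}$, and then evaluates the bracket as a $2\times2$ bilinear expression that factors into $[w^{*}(a),q^{*}(a)]_{f-1}\cdot(-\det M)\cdot[w^{*}(b),q^{*}(b)]_j$, finishing with a direct check that $\det M=-\tfrac{1-b}{1-a}\tau(a,b)^2$. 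Your approach instead iterates the one--step Casorati identity $D_{n+1}=xD_n$ directly, using Lemma~\ref{L:wbarID} to see that $\overline{w}_{n,0}=\overline{w}_{1,n+1}$ obeys the same recursion as $\overline{q}_n$ at each index (mixed $(\beta(a,b),x(a,b))$ once at $n=f$, then $(\beta_b,x_b)$ thereafter). This mirrors the paper's own proof of Proposition~\ref{P:interlacing} (cf.\ \eqref{E:interlacing3}--\eqref{E:interlacing4}) and bypasses Lemma~\ref{L:Qformula} and the $\det M$ computation entirely. The paper's route has the advantage that Lemma~\ref{L:Qformula} is needed later anyway (in the proof of Theorem~\ref{T:T3}) and the factorization through $\det M$ exposes why the $\tau(a,b)^2$ in $x(a,b)$ arises; your route is shorter and self--contained for the lemma at hand.
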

Before we can prove Lemma \ref{L:bracketqw} we write a formula for $\overline{q}_n$ as follows.
\begin{lemma}\label{L:Qformula}
Let  $M:=Q(b)^{-1}B$, for $Q(b)$ defined by \eqref{E:Q} and $B$ defined by \eqref{E:B3}. Then, 
 \begin{equation}\label{E:qbarmatrixform}
 \overline{q}_{f+j-1}= \left[ \begin{array}{ll}
q_{j}^{*}(b) & w_{j}^{*} (b)
\end{array}\right ] M\left[ \begin{array}{l}
q_{f-1}^{*}(a) \\ q_{f}^{*} (a)\end{array}\right ], \  \forall \ j \ge 1.
\end{equation}
\end{lemma}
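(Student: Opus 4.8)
The plan is to reuse, essentially verbatim, the mechanism behind Proposition~\ref{P:wformula}: both sides of \eqref{E:qbarmatrixform} will be shown to satisfy the same second-order recurrence in $j$ for $j\ge 2$, and to agree at the two base indices $j=1,2$. Set $\mathbf{e}:=M\left[\begin{array}{c} q_{f-1}^{*}(a)\\ q_{f}^{*}(a)\end{array}\right]$ and define $v_j:=\left[\begin{array}{cc} q_j^{*}(b) & w_j^{*}(b)\end{array}\right]\mathbf{e}$ for $j\ge 1$, so that \eqref{E:qbarmatrixform} is exactly the assertion $v_j=\overline{q}_{f+j-1}$. Because $\{q_j^{*}(b)\}$ and $\{w_j^{*}(b)\}$ each satisfy $v_{j+1}=\beta_b v_j-x_b v_{j-1}$ and the entries of $\mathbf{e}$ are constants independent of $j$, the sequence $\{v_j\}$ satisfies that same recurrence for $j\ge 2$. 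On the other hand, writing $\widetilde{q}_j:=\overline{q}_{f+j-1}$, Definition~\ref{D:qbar}(4) translates, after the index shift, into $\widetilde{q}_{j+1}=\beta_b\widetilde{q}_j-x_b\widetilde{q}_{j-1}$ for $j\ge 2$. Hence it suffices to verify the two base cases $v_1=\overline{q}_f$ and $v_2=\overline{q}_{f+1}$.

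For the base cases, stack $v_1$ and $v_2$: by the definition of $Q(b)$ in \eqref{E:Q} and $M=Q(b)^{-1}B$ from \eqref{E:M},
\[
\left[\begin{array}{c} v_1\\ v_2\end{array}\right]=Q(b)\,\mathbf{e}=Q(b)M\left[\begin{array}{c} q_{f-1}^{*}(a)\\ q_{f}^{*}(a)\end{array}\right]=B\left[\begin{array}{c} q_{f-1}^{*}(a)\\ q_{f}^{*}(a)\end{array}\right].
\]
Reading off the first row of $B$ from \eqref{E:B3} gives $v_1=\tfrac{b-a}{1-a}q_{f-1}^{*}(a)+\tfrac{1-b}{1-a}q_{f}^{*}(a)$, which is precisely $\overline{q}_f$ by Definition~\ref{D:qbar}(2). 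Reading off the second row gives $v_2=\kappa(a,b)q_{f-1}^{*}(a)+\tfrac{1-b}{1-a}\beta(a,b)q_{f}^{*}(a)$; substituting $\kappa(a,b)=\tfrac{b-a}{1-a}\beta(a,b)-x(a,b)$ and regrouping yields $v_2=\beta(a,b)\overline{q}_f-x(a,b)q_{f-1}^{*}(a)=\beta(a,b)\overline{q}_f-x(a,b)\overline{q}_{f-1}$, which is $\overline{q}_{f+1}$ by Definition~\ref{D:qbar}(1),(3). This establishes $v_j=\overline{q}_{f+j-1}$ for $j=1,2$, hence for all $j\ge 1$ by the common two-term recurrence, proving \eqref{E:qbarmatrixform}.

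The only point requiring genuine care is the index bookkeeping: one must confirm that Definition~\ref{D:qbar}(4), after the shift $j\mapsto f+j-1$, is valid from $j=2$ onward, so that matching at $j=1,2$ suffices, and one must apply the identity $Q(b)M=B$ in the correct order. Everything else is a direct unwinding of the definitions of $B$, $\kappa(a,b)$, and $\overline{q}$, entirely parallel to the proof of Proposition~\ref{P:wformula}.
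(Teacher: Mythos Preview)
Your proof is correct and follows essentially the same approach as the paper's own proof: both establish the formula by verifying that the two sides agree at $j=1,2$ (via the identity $Q(b)M=B$ applied to the vector $[q_{f-1}^{*}(a),q_{f}^{*}(a)]^{T}$) and then invoke the common Fibonacci recurrence $v_{j+1}=\beta_b v_j-x_b v_{j-1}$ for $j\ge 2$. Your treatment is slightly more explicit in unpacking the rows of $B$ against Definition~\ref{D:qbar}(2),(3), but the argument is otherwise identical to the paper's.
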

\begin{proof} The proof is almost the same as the proof of Proposition \ref{P:wformula}. Define $Q(b)$ as before in \eqref{E:Q}, but write now a revision $\mathbf{W}_q(f)$ of $\mathbf{W}(\ell)$, and write  $\mathbf{W}_q(f)$ in two ways 
 as follows.
\begin{equation}\label{E:Qprime}
 \mathbf{W}_q(f):=\left[ \begin{array}{l}\overline{q}_{f} \\ \overline{q}_{f+1}\end{array}\right ] =Q(b)\mathbf{d}_q(f); \ \ \ \mathbf{W}_q(f) =B\left[ \begin{array}{l}q_{f-1}^{*}(a) \\ \ q_{f}^{*}(a) \end{array}\right ].
 \end{equation}
We have $B$ given by \eqref{E:B3}, because by Definitions \ref{D:wbar} and \ref{D:qbar} the equations that define $B$ in \eqref {E:Qprime} are the same as those defining $B$ in \eqref{E:B3}.
By equating the two expressions for the vector $\mathbf{W}_q(f)$ in \eqref{E:Qprime}, we  recover 
$\mathbf{d}_q(f) =M \left[ \begin{array}{l}q_{f-1}^{*}(a)\\ \ q_{f}^{*}(a)\end{array}\right ].$
Now the formula \eqref{E:qbarmatrixform} follows because by \eqref{E:Qprime} and the definition of $Q(b)$ we have established the formula for $j=1,2$. Therefore by the fact that either side  of \eqref{E:qbarmatrixform}  satisfies the same recurrence $v_{j+1}=\beta_b v_j-x_b v_{j-1}$, $j\ge 2$, we have that both sides are equal as stated. \qed \end{proof}
\begin{proof}[Lemma \ref{L:bracketqw}.] We first prove statement \emph{1}. By the simple fact that $\{q_n^{*}(a)\}$ and $\{w_n^{*}(a)\} $ satisfy the same Fibonacci recurrence, we have:
$\begin{array}{l} [w^{*}(a),q^{*}(a)]_n=  w_n^{*}(\beta_aq_n^{*}- x_a q_{n-1}^{*})-q_n^{*}(\beta_aw_n^{*}-x_a w_{n-1}^{*})
 =x_a[w^{*},q^{*}]_{n-1} \end{array}$, \\
holds for all $n\ge 1$, where we suppressed the $a$ in $q_n^{*}$ and $w_n^{*}$. By direct calculation from \eqref{E:tauxbetahomogeneous}--\eqref{E:qwstar0} and \eqref{E:bracketqw},
$[w^{*}(a),q^{*}(a)]_0=w_0^{*}(a)q_{1}^{*}(a)-q_0^{*}(a)w_{1}^{*}(a)=a^2z^2/x_a.$
Therefore, since we may iterate the one--step recursion for  $[w^{*}(a),q^{*}(a)]_n$, we obtain statement \emph{1} of the lemma. 
\par
Next,  by Defintions \ref{D:wbar} and \ref{D:qbar}, $[\overline{w},\overline{q}]_{f-1}=\overline{w}_{1,f}\overline{q}_f-\overline{q}_{f-1}\overline{w}_{1,f+1}$ 
$=\begin{array}{l} w_{f-1}^{*}\left( \frac{1-b}{1-a}q_{f}^{*}+\frac{b-a}{1-a}q_{f-1}^{*}\right )-q_{f-1}^{*}\left( \frac{1-b}{1-a}w_{f}^{*}+\frac{b-a}{1-a}w_{f-1}^{*}\right )  =\frac{1-b}{1-a}[w^{*},q^{*}]_{f-1}.\end{array}$
Therefore statement \emph{2} of the lemma follows by statement \emph{1}.
\par
Finally, note that by Lemma \ref{L:wbarID} we have $\overline{w}_{n,0}=\overline{w}_{1,n+1}$. Further by  \eqref{E:M} and Proposition \ref{P:wformula} with $\ell = f-1$, we may write 
 \begin{equation}\label{E:wn0formula}            
 \begin{array}{l}
\ \ \overline{w}_{1,f+j}= \left[ \begin{array}{ll} q_{j}^{*}(b) & w_{j}^{*} (b)\end{array}\right ] 
M \left[  \begin{array}{l} w_{f-1}^{*}(a) \\ w_{f}^{*} (a) \end{array}\right ].
\end{array}
\end{equation}
Now write $n=f+j-1$ for some $j\ge 1$. Also for simplicity abbreviate $q_{0}=q_{f-1}^{*}(a)$,  $q_{1}=q_{f}^{*}(a)$, $w_{0}=w_{f-1}^{*}(a)$, $w_{1}=w_{f}^{*}(a)$, $u=q_{j}^{*}(b)$, $v=w_{j}^{*}(b)$, $U=q_{j+1}^{*}(b)$, $V=w_{j+1}^{*}(b)$. By \eqref{E:bracketqw}, Lemma \ref{L:Qformula}, and \eqref{E:wn0formula}, 
$$[\overline{w},\overline{q}]_n= \left[ \begin{array}{ll}u& v \end{array}\right ] M
\left\{ \left[  \begin{array}{l} w_{0}\\ w_{1} \end{array}\right ]\left[ \begin{array}{ll}U & V \end{array}
\right ] M
\left[  \begin{array}{l} q_{0}\\ q_{1} \end{array} \right ] 
 - \left[  \begin{array}{l} q_{0} \\ q_{1} \end{array} \right ]\left[ \begin{array}{ll} U& V \end{array}\right ] 
 M
\left[ \begin{array}{l}w_{0} \\ w_{1}\end{array}\right ] \right \}.$$
Denote $M=(\mu_{i,j})$, and calculate the expression under the curly brackets as follows:
$ (w_0q_1-q_0w_1) \left [ \begin{array}{ll} \ \ \mu_{1,2}& \ \ \mu_{2,2} \\ -\mu_{1,1} &-\mu_{2,1} \end{array} \right ] \left [ \begin{array}{ll} U\\ V \end{array} \right ].$
Therefore, after multiplying through by $\left[ \begin{array}{ll}u& v \end{array}\right ] M$, we obtain:\\
$[\overline{w},\overline{q}]_n=(w_0q_1-q_0w_1)\left[ \begin{array}{ll}u& v \end{array}\right ] \left[  \begin{array}{ll} \ \ \ \  0 & \det(M) \\ -\det(M) & \ \  0\end{array}\right ]\left[ \begin{array}{ll}U\\ V\end{array}\right ].$
Putting back our variables we thus have 
$$ [\overline{w},\overline{q}]_n= [ w^{*}(a),q^{*}(a) ]_{f-1} (-\det(M))[ w^{*}(b),q^{*}(b)]_j, \mbox{ valid for all } j\ge 1.$$ Also, by direct calculation,  $\det(M)= -\frac{1-b}{1-a}\tau(a,b)^2$. 
Thus by \eqref{E:tauxbeta} and statement \emph{1}, the statement \emph{3} of the lemma is proved. 
\qed 
\end{proof}
\begin{theorem}\label{T:KN}
The conditional generating function \eqref{E:KN}  has the following formula.
\[K_{N}(a,b)=C_{N,a,b}\frac{r^2z ^2\overline{q}_{N}}{\overline{w}_{1,N+1}},\ N\ge 1,\]
where $\overline{q}_{N}$ and $\overline{w}_{1,N+1}$ are given by Lemma \ref{L:Qformula} and \eqref{E:wn0formula} respectively, each with $j=N-f+1$, and where $C_{N,a,b}=(1-a)\frac{(N-f)a+(f-1)b-(N-1)ab}{(N-f)a+(f-1)b-Nab}.$
\end{theorem}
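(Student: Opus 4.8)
The plan is to work with the un-normalized generating function $D_N:=E\{r^{\mathbf R}y^{\mathbf V}z^{\mathbf L}\mathbbm 1(\mathbf H\le N)\}$, so that $K_N(a,b)=D_N/D_N[\mathbf 1]$ since $D_N[\mathbf 1]=P(\mathbf H\le N)$; this reduces the theorem to showing $D_N=(1-a)\,r^2z^2\,\overline q_N/\overline w_{1,N+1}$ and then reading off the constant by an evaluation at $\mathbf 1$. Conditioning on the height of the excursion gives $D_N=\sum_{n=1}^N\Phi_n$, where $\Phi_n:=E\{r^{\mathbf R}y^{\mathbf V}z^{\mathbf L}\mathbbm 1(\mathbf H=n)\}=P(\mathbf H=n)\,G_n$ with $G_n$ the conditional height-$n$ generating function of \eqref{E:G}. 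For $n\ge 3$ I would use the product decomposition $G_n=a(2-a)zh_a\,g_{1,n}\,k[a,b]_n^-\,g_{n,0}$ from \eqref{E:G3}; the two values $n=1$ (where $\Phi_1=(1-a)r^2y^2z^2=(1-a)r^2z^2\,\overline q_1/\overline w_{1,2}$, using $\overline q_1=q_1^*(a)=y^2$ from \eqref{E:qwstar0} and $\overline w_{1,2}=1$) and $n=2$ are checked directly, and serve as the base of the telescoping.

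The heart of the argument is to bring $\Phi_n$, $n\ge 3$, into closed form. I would first record $P(\mathbf H=n)$ as $P(\mathbf H\le n)-P(\mathbf H\le n-1)$, expressed through the ruin-type probabilities $\rho_{0,n},\rho_{0,n-1}$ — equivalently through the $\Pi$-terms of Definition \ref{D:Pi} and Proposition \ref{P:Pi}. Inserting this along with the closed formulas of Proposition \ref{P:gProp} for $g_{1,n}$ and $g_{n,0}$ — whose denominators are $\overline w_{1,n}$ and $\overline w_{n,0}=\overline w_{1,n+1}$ by Lemma \ref{L:wbarID}, and whose numerators carry precisely the $\Pi$-factors that $P(\mathbf H=n)$ cancels (in the manner of Remark \ref{R:gconstant}) — collapses $\Phi_n$ to $(1-a)r^2z^2$ times a ratio $\Delta_n/(\overline w_{1,n}\overline w_{1,n+1})$, where $\Delta_n$ is an explicit monomial in $z$, $\tau$, and the $x$-parameters $x_a,x(a,b),x_b$. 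Comparing $\Delta_n$ with the single-indexed Casorati bracket $[\overline w,\overline q]_{n-1}=\overline w_{1,n}\overline q_n-\overline w_{1,n+1}\overline q_{n-1}$ (this is \eqref{E:bracketqw} rewritten via Lemma \ref{L:wbarID}), whose value is exactly of this monomial shape by Lemma \ref{L:bracketqw} and Proposition \ref{P:interlacing}, identifies $\Phi_n=(1-a)r^2z^2\,[\overline w,\overline q]_{n-1}/(\overline w_{1,n}\overline w_{1,n+1})=(1-a)r^2z^2\big(\overline q_n/\overline w_{1,n+1}-\overline q_{n-1}/\overline w_{1,n}\big)$. Summing over $1\le n\le N$ telescopes to $D_N=(1-a)r^2z^2\,\overline q_N/\overline w_{1,N+1}$. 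Since $\{\overline q_n\}$ and $\{\overline w_{1,n}\}$ obey the piecewise three-term recurrences of Definitions \ref{D:qbar} and \ref{D:wbar}, it suffices to verify this identity for $\Phi_n$ in the within-stratum range $3\le n<f$, at the two threshold indices $n=f$ and $n=f+1$, and then propagate it for $n>f+1$ by the common recurrence, exactly in the style of the proofs of Proposition \ref{P:interlacing} and Lemma \ref{L:bracketqw}.

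Finally, evaluate at $(r,y,z)=\mathbf 1$. Because $K_N[\mathbf 1]=1$ and $r^2z^2$ evaluates to $1$, the constant must be $C_{N,a,b}=\overline w_{1,N+1}[\mathbf 1]/\overline q_N[\mathbf 1]$; computing $\overline w_{1,N+1}[\mathbf 1]=\overline w_{N,0}[\mathbf 1]$ from Lemma \ref{L:C} (part 2) and Proposition \ref{P:Pi} (part I.2), and $\overline q_N[\mathbf 1]$ from Lemma \ref{L:Qformula} together with Lemma \ref{L:C} (part 3), each with $j=N-f+1$, yields $C_{N,a,b}$ in closed form and completes the proof. The main obstacle is the identification step in the second paragraph: showing that, \emph{uniformly in} $n$ and across all the index regimes of Definitions \ref{D:qbar}--\ref{D:wbar} and Proposition \ref{P:gProp}, the residual numerator of $\Phi_n$ after the $\Pi$-cancellations coincides with $[\overline w,\overline q]_{n-1}$; this is the part most heavily reliant on the computer-algebra verifications recorded in \cite{Mor2018}.
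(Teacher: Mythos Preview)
Your proposal is correct and follows essentially the same route as the paper: decompose $P(\mathbf H\le N)K_N=\sum_n G_nP(\mathbf H=n)$, use the product formula \eqref{E:G3} together with Proposition~\ref{P:gProp} to express each summand as $(1-a)r^2z^2[\overline w,\overline q]_{n-1}/(\overline w_{1,n}\overline w_{1,n+1})$ via Lemma~\ref{L:bracketqw}, telescope, and then evaluate the constant through $P(\mathbf H\le N)$. The only organizational difference is that you propose to check the bracket identification at the threshold indices $n=f,f+1$ and then propagate by the common three-term recurrence, whereas the paper simply invokes Lemma~\ref{L:bracketqw}, parts~\emph{2}--\emph{3}, which already deliver the closed monomial form of $[\overline w,\overline q]_{n-1}$ for all $n\ge f$ at once; either way works.
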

In the homogeneous case $a=b$, Theorem \ref{T:KN} takes the following form.
\begin{proposition}\label{P:KN}
Suppose $a=b$. Then $K_N(a)$ of \eqref{E:KN}  has the following formula.
$K_{N}(a)=\frac{1}{N}(N-(N-1)a)\frac{r^2z ^2q_{N}^{*}(a)}{w_{N}^{*}(a)},\ N\ge 1,$
where $q_{N}^{*}(a)$ and $w_{N}^{*}(a)$ are defined by \eqref{E:tauxbetahomogeneous}--\eqref{E:qwstar1}. 
\end{proposition}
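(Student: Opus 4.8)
The plan is to identify $K_N(a)$, via the classical return-to-a-level decomposition of excursions, with a ratio whose numerator and denominator are governed by the three-term recurrence \eqref{E:wnstara} — hence are, up to scalars, $q_N^*(a)$ and $w_N^*(a)$ — and then to fix the remaining scalar from the fact that $K_N(a)$ is a conditional probability generating function.

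First I would run the decomposition. By the up--down reflection symmetry of the homogeneous model it suffices to treat positive excursions. A positive excursion of height at most $N$ is an initial step $0\to1$, followed by a (possibly empty) portion that starts and ends at level $1$ and stays in $[1,N]$ — read as a chain of ``excursions above level $1$'' glued at level $1$ — followed by a closing step $1\to0$; the middle portion is governed by the same sort of quantity with $N$ lowered to $N-1$ and the base level raised by one, the gluing at level $1$ carrying exactly the persistence weights already isolated in \eqref{E:khomega}--\eqref{E:gamma} (the factors $\omega_a$, $k_a$, $h_a$, $\tau_a$ and the turning probability $\gamma_1=1-a$). Since in the homogeneous model every level $m\ge1$ behaves identically, iterating this one-step, linear-fractional reduction over $m=1,\dots,N-1$ presents $K_N(a)$ as a continued-fraction convergent: its numerator and denominator, as polynomials in $r,y,z$, each satisfy a common three-term recurrence for indices past the first two. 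I expect the real work to be the \emph{direct} verification that, once the turning-point factors are collected, the coefficients of that recurrence are exactly $x_a=a^2z^2\tau_a^2$ and $\beta_a$ of \eqref{E:tauxbetahomogeneous}; this is the analogue, for the persistent walk, of the classical continued-fraction computation expressing the height-bounded excursion generating function through Fibonacci polynomials $F_n(\tfrac14z^2)$ (\cite{deBrKnRi1972}; \cite[V.4.3]{FlSe2009}).

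Granting that, the identification is routine. The denominator solves \eqref{E:wnstara}, so it is a fixed linear combination of $q_n(x_a,\beta_a)$ and $w_n(x_a,\beta_a)$; comparing the two lowest convergents — equivalently, computing $K_1(a)=r^2y^2z^2$ and $K_2(a)$ straight from \eqref{E:KN} (for $K_2$, using the $\omega_a$-geometric series for the return to level $1$ beneath a peak at level $2$, exactly as in the verification of $g_3$ following \eqref{E:qwstar1}) — identifies the denominator with $w_N^*(a)$ (with $w_1^*(a)=1$, $w_2^*(a)=\omega_a$) and the numerator with a scalar multiple $c_N\,r^2z^2q_N^*(a)$, the initial data \eqref{E:qwstar0}--\eqref{E:qwstar1} of $q_n^*(a)$ having been chosen precisely so that $K_1(a)=r^2z^2q_1^*(a)/w_1^*(a)$; here $c_N$ is free of $r,y,z$, absorbing the level-$0$ link and the conditioning on $\{\mathbf{H}\le N\}$. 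Finally $K_N(a)[\mathbf{1}]=1$ since $K_N(a)$ is a conditional probability generating function, and by part (3) of Lemma \ref{L:C} we have $q_N^*(a)[\mathbf{1}]=Na^{N-1}$ and $w_N^*(a)[\mathbf{1}]=a^{N-1}(N-(N-1)a)$; hence $c_N=w_N^*(a)[\mathbf{1}]/q_N^*(a)[\mathbf{1}]=\tfrac1N(N-(N-1)a)$, which is the asserted formula. As an alternative to building the continued fraction from scratch, one may instead start from the closed form \eqref{E:ghomogeneous} of $g_{0,n}$ (whose denominator is already $w_n^*(a)$) together with the height-conditioned identity \eqref{E:G3}, write $K_N(a)=\sum_{n=1}^N P(\mathbf{H}=n)\,G_n$, express $g_{1,n}$ and $g_{n,0}$ through $w_n^*(a)$ by translation and reflection invariance, and collapse the sum with the interlacing identity in part (1) of Lemma \ref{L:bracketqw}; the normalization step is then unchanged.
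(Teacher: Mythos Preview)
Your alternative in the final paragraph is exactly the paper's proof: write $P(\mathbf{H}\le N)K_N(a)=\sum_{n=1}^N G_n\,P(\mathbf{H}=n)$, plug in \eqref{E:G3} and \eqref{E:ghomogeneous} to obtain $G_nP(\mathbf{H}=n)=(1-a)r^2z^2\cdot a^2z^2x_a^{\,n-2}/\bigl(w_n^*(a)w_{n-1}^*(a)\bigr)$ for $n\ge2$, recognize this via Lemma~\ref{L:bracketqw}\emph{(1)} as $(1-a)r^2z^2\bigl(q_n^*/w_n^*-q_{n-1}^*/w_{n-1}^*\bigr)$, telescope, and divide by $P(\mathbf{H}\le N)=N(1-a)/(N-(N-1)a)$. (Minor slip: your displayed sum for $K_N$ omits the factor $P(\mathbf{H}\le N)^{-1}$.)

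Your main approach, setting up a one--step M\"obius recursion for the height--bounded excursion generating function and reading off convergents, is a genuinely different route. It is in principle viable, but as written it is a plan rather than a proof. Two points. First, the step you flag as ``the real work''---that after collecting turning--point factors the linear--fractional recursion has coefficients exactly $x_a,\beta_a$ of \eqref{E:tauxbetahomogeneous}---is the entire content here; the paper never derives $x_a,\beta_a$ from a M\"obius map for $K_N$, but instead guesses them from low--index data and verifies \eqref{E:ghomogeneous} by induction together with Lemma~\ref{L:homogeneousinterlacing}. In the persistent model each step's weight is coupled to the previous one, and the run/short--run markers must be reconciled at every peak and valley (hence the zoo $\omega_a,\tau_a,k_a,h_a$), so the analogue of the classical $K_N=z^2/(1-K_{N-1})$ is not immediate. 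Second, the level--$0$ excursion has its first step weighted by $\pi_\pm=\tfrac12$, whereas every sub--excursion above level~$1$ begins after a turn of weight $1-a$; thus the linear--fractional map linking level~$0$ to level~$1$ is not literally the same as the one linking level~$m$ to level~$m+1$ for $m\ge1$. Your scalar $c_N$ cannot both be free of $r,y,z$ and make the numerators satisfy the $\beta_a,x_a$ recurrence unless you first separate out the conditioning $P(\mathbf{H}\le N)$ and this level--$0$ link explicitly. Once you do, the normalization via Lemma~\ref{L:C}\emph{(3)} is correct and matches the paper's constant.
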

\begin{proof}[Proposition \ref{P:KN} and Theorem \ref{T:KN}] First let $a=b$. The proof parallels the construction of convergents to a continued fraction; \cite[Chp. III]{Ch1978}.
By $G_n(a)$ of  \eqref{E:G3} and by formula \eqref{E:ghomogeneous},  for all $n\ge 3,$
\begin{equation}\label{E:Ghomogeneous}
\begin{array}{c}
G_{n}(a)= a(2-a) z h_{a} k_ag_{n-1}g_{n}
=c_{n,a}r^2z^{2n} \tau_{a}^{2n-4}/[w_n^{*}(a)w_{n-1}^{*}(a)], 
\end{array}
\end{equation}
for $c_{n,a}:=a^2(2-a)^2C_{n,a}C_{n-1,a}$,  with $C_{n,a}$ given by
\eqref{E:ghomogeneous}, where we have written $h_a k_a \omega_a^2= a(2-a)\tau_a$ by the definitions \eqref{E:khomega}. 
Further, for the full model we have:  
\begin{equation}\label{E:Hhomogeneous}
\begin{array}{c}
P(\mathbf{H}=n)= 4a\rho_{1,n}\gamma_n\rho_{n,0}, \ n\ge 2;  
\\  P(\mathbf{H}\ge n+1)=2a\rho_{1,n+1}, \ n\ge 2; 
\ \ P(H=1)= 2\frac{1}{2}(1-a)=1-a.
\end{array}
\end{equation}
By first principles we find $G_1(a)=r^2y^2z^2$ and $G_2(a)=r^2z^4k_a$. 
Therefore by  \eqref{E:KN} and \eqref{E:G},
$P(\mathbf{H}\le N)K_{N}(a)=\sum_{n=1}^{N}G_n(a)P(\mathbf{H}=n)$ is written by:
\begin{equation}\label{E:KNa}
\begin{array}{c}
 (1-a)r^2y^2z^2+\frac{a(1-a)}{2-a}r^2z^4k_a +\sum\limits_{n=3}^{N}c_{n,a}P(\mathbf{H}=n)\frac{r^2z^{2n} \tau_{a}^{2n-4}}{w_n^{*}(a)w_{n-1}^{*}(a)}.
\end{array}
\end{equation}
By  \eqref{E:Ghomogeneous}--\eqref{E:Hhomogeneous} and direct calculation, $c_{n,a}P(\mathbf{H}=n)=(1-a)a^{2n-2}$. Also, by \eqref{E:khomega} and \eqref{E:tauxbetahomogeneous}, $a^{2n-2}r^2z^{2n}\tau_a^{2n-4}=a^2 r^2z^4x_a^{n-2}$ while $k_a=\frac{a(2-a)}{w_{2}^{*}(a)}$, since $\omega_a=w_{2}^{*}(a)$. Therefore by \eqref{E:KNa}, $P(\mathbf{H}\le N)K_{N}(a)$ is written:
\begin{equation}\label{E:KNa1}
\begin{array}{l}
(1-a)r^2z^2\left(y^2+ \frac{a^2z^2}{w_{2}^{*}(a)}+\sum_{n=3}^{N}\frac{a^2 z^2 x_{a}^{n-2}}{w_n^{*}(a)w_{n-1}^{*}(a)}\right ).
\end{array}
\end{equation}
By \eqref{E:tauxbetahomogeneous}--\eqref{E:qwstar1} and direct calculation, we have that $y^2= q_{1}^{*}(a)/w_{1}^{*}(a)$ and $y^2+ a^2z^2/w_{2}^{*}(a)= q_{2}^{*}(a)/w_{2}^{*}(a)$. 
But, by Lemma \ref{L:bracketqw}, \emph{1}, we may write \\
$\begin{array}{c}
\frac{q_{n}^{*}}{w_n^{*}}-\frac{q_{n-1}^{*}}{w_{n-1}^{*}}=\frac{[w^{*}(a),q^{*}(a)]_{n-1}}{w_{n}^{*}w_{n-1}^{*}}=\frac{a^2z^2x_a^{n-2}}{w_n^{*}w_{n-1}^{*}},\ n\ge 1,
\end{array}$
where we suppressed the dependence on $a$ in $q_n^{*}$ and $w_n^{*}$. Therefore the sum in \eqref{E:KNa1} telescopes.  
Therefore, for all $N\ge 1$ the right side of \eqref{E:KNa1} becomes:
$(1-a)r^2z^2q_{N}^{*}(a)/w_{N}^{*}(a).$
Finally, apply \eqref{E:Hhomogeneous} and Proposition \ref{P:Pi} to compute $ P(\mathbf{H}\le N)=\frac{N(1-a)}{N-(N-1)a}$; so that,   by \eqref{E:KNa1}, Proposition \ref{P:KN} is proved. 
\par
We now indicate the additional steps required to  prove the Theorem \ref{T:KN}. First, with $N=f-1$ in Proposition \ref{P:KN}, and by Lemma \ref{L:bracketqw}, \eqref{E:KNa1} yields:  
\begin{equation}\label{E:TKN1}
\begin{array}{c}
\sum\limits_{n=1}^{f-1}G_nP(\mathbf{H}=n)=(1-a)r^2z^2\left(y^2+\sum\limits_{n=2}^{f-1}\frac{[w^{*}(a),q^{*}(a)]_{n-1}}{w_n^{*}(a)w_{n-1}^{*}(a)}\right ), 
\end{array}
\end{equation}
where here and in the rest of the proof we abbreviate $G_n=G_n(a,b)$.
Next by \eqref{E:G3} and  Proposition \ref{P:gProp}, \emph{III.1.1} and \emph{II.2},
\begin{equation}\label{E:TKN2}
\begin{array}{c}
G_f=a(2-a)z h_a k(a,b)g_{1,f}g_{f,0}=c_f\frac{r^2z^{2f}\left(a\tau_a\right)^{2f-4}}{w_{f-1}^{*}(a)\overline{w}_{f,0}}.
\end{array}
\end{equation}
Also by \eqref{E:G3} and  Proposition \ref{P:gProp}, \emph{I.1} and \emph{II.1}, for all $j\ge 1$, $G_{f+j}$ becomes:
\begin{equation}\label{E:TKN3}
\begin{array}{c}
a(2-a)zh_ak(b,b)g_{1,f+j}g_{f+j,0} =c_{f+j}\frac{r^2z^{2f+2j}\left(a\tau_a\right)^{2f-4}\tau(a,b)^2\left(b\tau_b\right)^{2j-2}}{\overline{w}_{1,f+j}\overline{w}_{f+j,0}}.
\end{array}
\end{equation}
In \eqref{E:TKN2} and \eqref{E:TKN3} the constants $c_f$ and $c_{f+1}$, respectively can be determined from Lemma \ref{L:C} since $G_n[\mathbf{1}]=1$. Indeed we find in this way, and by Definition \ref{D:Pi}, Proposition \ref{P:Pi}, \eqref{E:Hhomogeneous},  and direct calculation that $c_{f}P(\mathbf{H}=f)=a^2(1-b)$ and  $c_{f+j}P(H=f+j)=a^2b^2(1-b)$, $j\ge 1$. 
Thus by \eqref{E:tauxbetahomogeneous}, \eqref{E:tauxbeta}, and \eqref{E:TKN2}--\eqref{E:TKN3}, and since $w_{f-1}^{*}(a)= \overline{w}_{1,f}$, for all $j\ge 0$, $G_{f+j}P(\mathbf{H}=f+j)$ equals
\begin{equation}\label{E:TKN4}
\begin{array}{c}
a^2(1-b)\frac{r^2z^4x_a^{f-2}}{\overline{w}_{1,f}\overline{w}_{f,0}}, \ \mbox{ if} \  j=0;  \ \ 
a^2(1-b)\frac{r^2z^4x_a^{f-2}x(a,b) x_b^{j-1}}{\overline{w}_{1,f+j}\overline{w}_{f+j,0}},  \ \mbox{ if} \   j\ge 1.
\end{array}
\end{equation}
Therefore, by \eqref{E:TKN1}, \eqref{E:TKN4} and Lemma \ref{L:bracketqw}, for all $j\ge0$ there holds:
\begin{equation}\label{E:TKN5}
\begin{array}{c}
\sum\limits_{n=1}^{f+j}G_nP(\mathbf{H}=n)=(1-a)r^2z^2\left(y^2+\sum\limits_{n=2}^{f+j}\frac{[\overline{w},\overline{q}]_{n-1}}{\overline{w}_{1,n}\overline{w}_{n,0}}\right),
\end{array}
\end{equation}
where the fraction $\frac{1-b}{1-a}$ enters to form  $[\overline{w},\overline{q}]_{n-1}$ when $n=f+j$ for $j\ge0$ because we have factored out $(1-a)$ from the entire sum on the right.
But by the definition \eqref{E:bracketqw} and Lemma \ref{L:wbarID} we have that $\frac{[\overline{w},\overline{q}]_{n-1}}{\overline{w}_{1,n}\overline{w}_{n,0}}=\frac{\overline{q}_n}{\overline{w}_{1,n+1}}-\frac{\overline{q}_{n-1}}{\overline{w}_{1,n}}.$
Hence the sum in \eqref{E:TKN5} telescopes, and  thereby we finally obtain
$K_N(a,b)=P(\mathbf{H}\le N)^{-1} (1-a)\frac{r^2z^2\overline{q}_N}{\overline{w}_{1,N+1}}, \ N\ge f,$
where $P(\mathbf{H}\le N)^{-1} =\frac{(N+1-f)a+(f-1)b-(N-1)ab}{(N+1-f)a+(f-1)b-Nab}$ by \eqref{E:Hhomogeneous} and direct calculation. 
\qed
\end{proof}
\subsubsection{Proof of Corollary \ref{C:RUL3waysymm} }\label{S:Appl}
The unconditional joint generating function of the excursion statistics is $K:=E\{r^{\mathbf{R}}y^{\mathbf{V}}z^{\mathbf{L}}\}$. We  develop a simple representation of $K$ in the homogeneous case, as follows.
\begin{corollary}\label{C:K}
Let $a=b$ and define $\alpha_a:=\sqrt{\beta_a^2 -4x_a}$ for $x_a$ and $\beta_a$ given by \eqref{E:tauxbetahomogeneous}. Then
$\begin{array}{c}
K = \lim_{N\to\infty}K_N(a)=\left(1-\frac{1}{2}\beta_a-\frac{1}{2}\alpha_a\right)/(1-a).
\end{array}$
\end{corollary}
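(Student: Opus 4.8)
\textbf{Proof plan for Corollary \ref{C:K}.}

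The plan is to pass to the limit $N\to\infty$ in the explicit formula $K_N(a)=\frac{1}{N}(N-(N-1)a)\,r^2z^2 q_N^*(a)/w_N^*(a)$ supplied by Proposition \ref{P:KN}, and to evaluate the limit of $q_N^*(a)/w_N^*(a)$ using the closed forms \eqref{E:qwclosed} together with the linear combinations \eqref{E:qwstar1}. First I would note that the scalar prefactor $\frac{1}{N}(N-(N-1)a)=1-a+\frac{a}{N}\to 1-a$, so the whole problem reduces to computing $\lim_{N\to\infty} q_N^*(a)/w_N^*(a)$ and dividing by $1-a$ — matching the shape of the claimed answer. (One should also check that $K_N(a)$ really is the generating function of the unconditional statistics in the limit, i.e.\ $P(\mathbf H\le N)\to1$, which is immediate from $P(\mathbf H\le N)=N(1-a)/(N-(N-1)a)\to1$; thus $K=\lim_N K_N(a)$ as asserted.)

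Next I would substitute \eqref{E:qwstar1}: writing $\alpha_a:=\sqrt{\beta_a^2-4x_a}$ and $\theta_{\pm}:=\tfrac12(\beta_a\pm\alpha_a)$ for the two roots of the characteristic equation $\theta^2-\beta_a\theta+x_a=0$, the closed formula \eqref{E:qwclosed} gives $q_n(x_a,\beta_a)=(\theta_+^n-\theta_-^n)/\alpha_a$ and $w_n(x_a,\beta_a)=q_n-x_aq_{n-1}$, hence $q_N^*(a)=c_1q_N+c_2w_N$ and $w_N^*(a)=c_1'q_N+c_2'w_N$ with the constants of \eqref{E:qwstar1}. Dividing numerator and denominator of $q_N^*/w_N^*$ by $\theta_+^N$ and using $|\theta_-/\theta_+|<1$ (for generic $r,y,z$ near $\mathbf 1$, where $\alpha_a\neq0$; the identity is then an identity of analytic functions, so it extends), every term carrying $\theta_-^N$ is killed in the limit. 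What survives is the ratio of the $\theta_+^N$-coefficients of $q_N^*$ and $w_N^*$: after collecting, the $q_N$ part contributes $1/\alpha_a$ and the $w_N$ part contributes $\theta_+ - x_a/\theta_+ = \theta_+-\theta_-=\alpha_a$ (using $\theta_+\theta_-=x_a$), so
\[
\lim_{N\to\infty}\frac{q_N^*(a)}{w_N^*(a)}
=\frac{c_1\cdot\frac{1}{\alpha_a}+c_2\cdot 1}{c_1'\cdot\frac{1}{\alpha_a}+c_2'\cdot 1}
\cdot(\text{after multiplying through by }\alpha_a)
=\frac{c_1+c_2\alpha_a}{c_1'+c_2'\alpha_a}.
\]
The remaining task is a direct calculation: plug in $c_2=q_0^*(a)$, $c_1=y^2-c_2$, $c_2'=w_0^*(a)$, $c_1'=1-c_2'$ from \eqref{E:qwstar1}, together with $w_0^*(a)=(\beta_a-\omega_a)/x_a$ and the formula \eqref{E:qwstar0} for $q_0^*(a)$, and simplify; the outcome should be $r^2z^2$ times $\bigl(1-\tfrac12\beta_a-\tfrac12\alpha_a\bigr)/(r^2z^2)$ — i.e.\ the $r^2z^2$ cancels against the $r^2z^2$ prefactor in $K_N$ — leaving $\bigl(1-\tfrac12\beta_a-\tfrac12\alpha_a\bigr)/(1-a)$. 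This last simplification is the main obstacle: it is the one genuinely computational step, and it is exactly the kind of identity the paper elsewhere verifies by \emph{direct calculation} (Mathematica, \cite{Mor2018}); a cleaner route, which I would try first, is to avoid $c_0^*,q_0^*$ altogether and instead note that $K=\lim_N K_N(a)$ must be a fixed point of the continued-fraction/one-step recurrence underlying \eqref{E:KN} (the excursion decomposition $K = r^2y^2z^2 + (\text{return-to-level-1 factor})\cdot K$), which directly yields a quadratic in $K$ whose relevant root is $\bigl(1-\tfrac12\beta_a-\tfrac12\alpha_a\bigr)/(1-a)$; then the explicit $q_N^*,w_N^*$ computation only needs to confirm that the limit picks out that root rather than the other.
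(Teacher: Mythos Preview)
Your approach is essentially the paper's: pass to the limit in Proposition~\ref{P:KN}, note $P(\mathbf H\le N)\to1$ and that the scalar prefactor tends to $1-a$, then extract $\lim_N q_N^*/w_N^*$ via the dominant root of the characteristic equation and finish by direct algebraic simplification. The paper carries this out with the equivalent trigonometric substitution $\beta_a=\sqrt{4x_a}\cos\theta$ (so your $\theta_\pm=\sqrt{x_a}\,e^{\pm i\theta}$), obtaining $\lim q_N^*/w_N^*=(y^2-q_0^*\theta_-)/(1-w_0^*\theta_-)$, then rationalizes by multiplying numerator and denominator by $1-w_0^*\theta_+$ and simplifies.

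One small slip to fix: the $\theta_+^N$-coefficient of $w_N=q_N-x_a q_{N-1}$ is $(1-x_a/\theta_+)/\alpha_a=(1-\theta_-)/\alpha_a$, not $\alpha_a$ (you have conflated $\theta_+-\theta_-$ with $1-\theta_-$). With this correction the limit reads
\[
\lim_{N\to\infty}\frac{q_N^*}{w_N^*}=\frac{c_1+c_2(1-\theta_-)}{c_1'+c_2'(1-\theta_-)}=\frac{y^2-q_0^*\theta_-}{1-w_0^*\theta_-},
\]
since $c_1+c_2=y^2$ and $c_1'+c_2'=1$, and this matches the paper exactly. Your fixed-point/continued-fraction alternative for identifying the quadratic satisfied by $K$ and selecting the correct root is a tidy shortcut the paper does not use.
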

\begin{proof}[Corollary \ref{C:K}] Since we have explicitly seen in the proof of Proposition \ref{P:KN} that if $a=b$, then
$P(\mathbf{H}\le N)= \frac{N(1-a)}{N-(N-1)a}$, we have that the persistent random walk is recurrent: $\lim_{N\to \infty}P(\mathbf{H}\le N)=1$. So we obtain that 
$(*)\ \ K=\lim\limits_{N\to \infty}K_N(a)=(1-a)r^2z^2\lim_{N\to \infty}q_N^{*}/w_N^{*}.$
Here and in the rest of the proof we suppress dependence on $a$ when convenient; in particular denote $x=x_a$ and $\beta=\beta_a$. 
\par
We introduce a substitution variable $\theta$ as follows: 
\begin{equation}\label{E:complexexp}
\beta:=\sqrt{4x} \cos \theta; \ \ \beta\pm \alpha=\sqrt{4x}(\cos \theta \pm i \sin \theta) =\sqrt{4x}e^{\pm i\theta},
\end{equation}
with $\Im \theta <0$ for $|r|<1, |y|<1, z\neq 0$. The idea of the substitution \eqref{E:complexexp} may be found in \cite[p. 352]{Fe1968}. By $(\beta+\alpha)^n-(\beta+\alpha)^n=(4x)^{n/2}e^{i n \theta}(1+e^{-2i n \theta})$, the formulae \eqref{E:qwclosed} may be rewritten, where by our convention for the sign of $\Im \theta$, $1+e^{-2i n \theta} =1+o(1)$, as $n\to\infty$. We then substitute these expressions into the formulae
\eqref{E:qwstar1}  and find that $q_n^{*}(a)/w_n^{*}(a)$ is given by:
$$\frac{(y^2-q_0^{*})q_n(x,\beta)+q_0^{*}w_n(x,\beta)}{(1-w_0^{*})q_n(x,\beta)+w_0^{*}w_n(x,\beta)}=\frac{y^2(1-e^{-2i n \theta}) -\sqrt{x}q_0^{*}e^{-i\theta}(1-e^{-2i (n-1) \theta})}{1-e^{-2i n \theta} -\sqrt{x}w_0^{*}e^{-i\theta}(1-e^{-2i (n-1) \theta})}.$$
Therefore, using $e^{-i\theta}=(\beta-\alpha)/\sqrt{4x}$, we obtain $ \lim_{n\to\infty}\frac{q_n^{*}}{w_n^{*}}=\frac{y^2-q_0^{*}(\beta-\alpha)/2}{1-w_0^{*}(\beta-\alpha)/2}.$
Finally, we simplify this expression by multiplying both numerator and denominator by $1-w_0^{*}(\beta+\alpha)/2$. The new denominator becomes $1+x_aw_0^{*}(a)^2-\beta_a w_0^{*}(a)=(1-a)^2r^2z^2/\tau_a^2$, by direct calculation. Therefore by bringing the $\tau_a^2$ of this last expression to the numerator we obtain that $\lim_{n\to\infty}(1-a)^2r^2z^2q_n^{*}/w_n^{*}= \tau_a^{2}(y^2-q_0^{*}(\beta-\alpha)/2)(1-w_0^{*}(\beta+\alpha)/2)$, or 
$$\begin{array}{c}
[y^2-\beta_a(q_0^{*}+y^2w_0^{*})/2+x_a w_0^{*}q_0^{*}] \tau_a^{2} + \alpha_a[q_0^{*}-y^2w_0^{*}]\tau_a^{2}/2  =I +\alpha_a II,
\end{array}$$
after cancellation of terms $\pm q_0^{*}w_0^{*}\alpha\beta/4$. By direct calculation we find that $I=1-\frac{1}{2}\beta_a$, and $II=-\frac{1}{2}$. 
Hence by $(*)$ the proof is complete. \qed
\end{proof}
We may view the excursion statistics in the case $a=b$ by the way they are weighted relative to one another. Indeed, a specific excursion path of $2n$ steps and $2k$ runs is weighted with the  probability $\frac{1}{2}a^{2n-2k}(1-a)^{2k-1}$, for $k$ peaks and $k-1$ valleys. In the unweighted case $a=\frac{1}{2}$, it is known that the joint distribution of $(\mathbf{L},\mathbf{R})$ is essentially the same as that of $(\mathbf{L},\mathbf{L}-\mathbf{R})$ [see \cite{OEIS}, A001263; symmetry of the Narayana numbers].
\begin{proof}[Corollary \ref{C:RUL3waysymm}] We establish the joint generating function identity in the unweighted case via a direct calculation. Let $r$, $u$ and $z$ belong to the unit circle. By applying Corollary \ref{C:K} with $a=\frac{1}{2}$ we obtain the joint generating function of runs, long runs, and steps by
$$\begin{array}{l}
K(\frac{1}{2})[ru,1/u,z]= \frac{1}{16}\left(16 - 4 z^2 + 4 r^2 z^2 + r^2 z^4 - 2 r^2 u z^4 + r^2 u^2 z^4 - S\right ),
\end{array}$$
with $S$ given by $S=S_1S_2S_3S_4$,  for: \\
$$\begin{array}{c}
S_1=\sqrt{4 + 2 z + 2 r z +r z^2 - r u z^2},\ S_2=\sqrt{ 4 + 
        2 z - 2 r z - r z^2 + r u z^2},\\  S_3=\sqrt{4 - 2 z + 2 r z - r z^2 + 
        r u z^2}, \ S_4=\sqrt{4 - 2 z - 2 r z + r z^2 -r u z^2}.
\end{array}$$
On the other hand, with the very same main term $S$, we have
$$\begin{array}{l}
K(\frac{1}{2})[u/r,1/u,rz]= \frac{1}{16}\left(16 + 4 z^2 - 4 r^2 z^2 + r^2 z^4 - 2 r^2 u z^4 + r^2 u^2 z^4 - S\right ).
\end{array}$$
The two generating functions differ by $K(\frac{1}{2})[ru,1/u,z]-K(\frac{1}{2})[u/r,1/u,rz]=\frac{1}{2}z^2(r^2-1)$. The difference is mirrored only in the event that $\mathbf{L}=2$, when it happens that  $\mathbf{R}=2$ and $\mathbf{U}=0$. Thus \eqref{E:RUL3waysymm} holds for  $a=\frac{1}{2}$ and $n\ge 2$.
\par
Perhaps the simplest way to obtain  \eqref{E:RUL3waysymm} for $a\neq \frac{1}{2}$ is to apply \eqref{E:RUL3waysymm} for the case $a=\frac{1}{2}$. 
Consider an excursion path $\Gamma$ with $\mathbf{L}(\Gamma)=2n$ and $\mathbf{L}(\Gamma)-\mathbf{R}(\Gamma)=2k$. Then $P_a(\Gamma)= \frac{1}{2}a^{2k}(1-a)^{2n-2k-1}$.  Here $\mathbf{R}(\Gamma)-1=2n-2k-1$ counts the number of turns in the path, so is the exponent of $(1-a)$ under $P_a$.  Alternatively, $\mathbf{L}(\Gamma)-\mathbf{R}(\Gamma)$ is the total length of long runs minus the number of long runs in $\Gamma$, and this gives the exponent of $a$ in $P_a( \Gamma)$. 
If $2n\ge 4$, then by the first part of the proof there are exactly as many paths $\Gamma$ with the joint information $\mathbf{L}(\Gamma)=2n$, 
$\mathbf{L}(\Gamma)-\mathbf{R}(\Gamma)=2k$, and $\mathbf{U}(\Gamma) =\ell$, as there are paths $\Gamma '$ with $\mathbf{L}(\Gamma ')=2n$, $\mathbf{R}(\Gamma ')=2k$, and $\mathbf{U}(\Gamma ') =\ell$. Therefore, since for any such path $\Gamma '$, the probability assigned by the probability measure $P_{1-a}$ yields
 $P_{1-a}(\Gamma ')=\frac{1}{2}a^{2k-1}(1-a)^{2n-2k}$, we have that $aP_{1-a}(\Gamma ')=(1-a)P_a(\Gamma)$, $\forall\ \Gamma$ with $\mathbf{L}(\Gamma)\ge 4$.  Hence \eqref{E:RUL3waysymm} holds.  
 \qed
 \end{proof}
\section{Proofs of Theorems \ref{T:T2} and \ref{T:T3}.}\label{S:T2}
\begin{proof}[Theorem \ref{T:T2}.]  We fix $t\in \mathbb{R}$. All big oh terms in the proof will refer to the parameter $N\to \infty$ with implied constants depending only on $a$, $b$, and $t$.  Since, by \cite{Moh1955}, for fixed $m>0$ and $f\sim \eta N\to\infty$, $P(\mathbf{X}_j=0\mbox{ before }\mathbf{X}_j=f|\mathbf{X}_0=m)\to 1$,  we may assume that $\mathbf{X}_0=0$. Let 
\begin{equation}\label{E:charfcnX}
\begin{array}{c}
r_N:=e^{-it(2-a-b)/((1-a)(1-b)N)},  \ y_N:=e^{it/((1-a)(1-b)N)}, \ z_N:=e^{it/N}.
\end{array}
\end{equation}
Since $\{(1+\frac{1}{N})X_{N+1} \}$ converges in distribution if and only if $\{X_N\}$ does,  by \eqref{E:Xab} it suffices to establish that $E\{e^{it(1+\frac{1}{N})X_{N+1}}\}=\hat{\varphi}(t)$, as $N\to \infty$. It is clear that $a(2-a)z_Nh_a[r_N,y_N,z_N]\to 1$ as $N\to \infty$. Therefore, by \eqref{E:meandergf}, we must show that  $\lim_{N\to\infty}g_{0,N}[r_N,y_N,z_N]$ equals the limit in \eqref{E:T2}. 
By Proposition \ref{P:gProp} \emph{I.1}, we have a formula for $g_{0,N}$, and by Proposition \ref{P:wformula} we have a formula for its denominator $\overline{w}_{0,N}$. 
The main work is in calculating an asymptotic expression for $\overline{w}_{0,N}[r_N,y_N,z_N]$.
\par
We now make substitutions analogous to \eqref{E:complexexp}, one for each stratum:
\begin{equation}\label{E:thetaab}
\begin{array}{c}
\cos(\theta_1):=\beta_a/\sqrt{4x_a}\ ;\ \  \cos(\theta_2):=\beta_b/\sqrt{4x_b}; \\
\beta_a\pm \alpha_a=\sqrt{4x_a}e^{\pm i\theta_1}\ ; \ \ \beta_b\pm \alpha_b=\sqrt{4x_b}e^{\pm i\theta_2},
\end{array}
\end{equation}
where all functions on the right sides of these expressions are composed with $[r_N,y_N,z_N]$ of \eqref{E:charfcnX}. Here we write $\sqrt{4x_a}$ as a shorthand for the expression $2az\tau_a$; see \eqref{E:tauxbetahomogeneous}. 
Note that the coefficients in \eqref{E:Xab} have been chosen such that the first order term of the Taylor expansions about $t=0$ of the substitutions $\cos \theta_j [r_N,y_N,z_N]$ , $j=1,2$, do in fact  vanish  in the following:
\begin{equation}\label{E:asympcostheta}
\begin{array}{c}
\cos\theta_1= 1+\frac{1}{2}\frac{\sigma_1^{2} t^2}{(1-b)^2N^2}+O\left(\frac{1}{N^3}\right); \  \ \cos\theta_2 =1+\frac{1}{2}\frac{\sigma_2^{2} t^2}{(1-a)^2N^2}+O\left(\frac{1}{N^3}\right),
\end{array}
\end{equation}
where $\sigma_1^2$ and $\sigma_2^2$ are as defined in the statement of the theorem, and we obtain  \eqref{E:asympcostheta} by direct computation. 
Therefore by \eqref{E:asympcostheta}, and by applying the Taylor expansion of $\arccos(u)$ about $u=1$, we find that $\theta_1$ and $\theta_2$ are both of order $1/N$ as follows:
\begin{equation}\label{E:asymptheta}
\begin{array}{c}
\theta_1 = i \frac{\sigma_1 t}{(1-b)N} +O\left(\frac{1}{N^3}\right); \ \theta_2 = i\frac{\sigma_2 t}{(1-a)N} +O\left(\frac{1}{N^3}\right).
\end{array}
\end{equation}
By Proposition \ref{P:wformula},  
\begin{equation}\label{E:star} \overline{w}_{0.N}=d_1(f)q_{N-f}^{*}(b)+d_2(f)w_{N-f}^{*} (b).
\end{equation}
We focus first on the coefficients  $d_{j}(f)$, which are written in terms of $w^{*}_f(a)$ and $w^{*}_{f+1}(a)$ by \eqref{E:M}. 
By \eqref{E:qwclosed} and \eqref{E:qwstar1}, suppressing dependence on $a$,  $w_f^{*}=(1-w_0^{*})q_f+w_0^{*}(q_f-xq_{f-1})=q_f(x,\beta)-w_0^{*}xq_{f-1}(x,\beta)$. Thus by \eqref{E:qwclosed},  and \eqref{E:thetaab}, 
\begin{equation}\label{E:wstarf}
\begin{array}{c}
w_f^{*}(a)=2i\alpha_a^{-1}\left(az\tau_a\right)^f\left\{\sin f\theta_1 -\sqrt{x_a}w_0^{*}(a)\sin (f-1)\theta_1 \right \}\\ 
w_{f+1}^{*}(a)=2i\alpha_a^{-1}\left(az\tau_a\right)^f\sqrt{x_a}\left\{\sin (f+1)\theta_1 -\sqrt{x_a}w_0^{*}(a)\sin f\theta_1 \right \};
\end{array}
\end{equation}
with verification by direct algebra for $q_f(x_a,\beta_a) = 2i \alpha_{a}^{-1}\left(az\tau_a\right)^f \sin(f\theta_1)$, and with $\sqrt{x_a}$ to stand for a factor of $\left(az\tau_a\right)$. 
Next denote 
\begin{equation}\label{E:Lambda1}
\begin{array}{c}
 e_j=e_j(f):=\frac{d_j(f)}{\Lambda_1},\ j=1,2, \ \mbox{for } \Lambda_1:=2i\alpha_{a}^{-1}\left(az\tau_a\right)^{f}=(\sin \theta_1)^{-1}\left(az\tau_a\right)^{f-1},
 \end{array}
\end{equation}
since $\alpha_a= i\sqrt{4x_a}\sin \theta_1=2i az \tau_a\sin\theta_1$. By \eqref{E:wstarf}--\eqref{E:Lambda1} and direct algebra, through \eqref{E:M} we can write an expression for $e_{j}$
as follows:
\begin{equation}\label{E:d1d2}
\begin{array}{l}
\left(\mu_{j,1}-\mu_{j,2}x_aw_0^{*}(a)\right)\sin f\theta_1 +\sqrt{x_a}\left[ \mu_{j,2}\sin (f+1)\theta_1-\mu_{j,1}w_0^{*}(a) \sin (f-1)\theta_1\right].
\end{array}
\end{equation}
Next we apply the trigonometric identity for the sine of a sum or difference to  $\sin (f+1)\theta_1$ and $\sin (f-1)\theta_1$ in \eqref{E:d1d2}. At this point we also introduce some abbreviations to keep the notation a bit compact. Thus write  
\begin{equation}\label{E:s1c1} 
\begin{array}{c}
\mathbf{s}_1:=\sin f\theta_1 ; \ \ \mathbf{c}_1:=\cos f\theta_1.
\end{array}
\end{equation}
We rewrite 
\eqref{E:d1d2}, with abbreviation $w_0^{*}=w_0^{*}(a)$, by collecting terms with a factor $\sqrt{x_a}$. Thus for each $j=1,2$, 
\begin{equation}\label{E:d1d2reduced} 
\begin{array}{l}
e_j=(\mu_{j,1}-\mu_{j,2}x_aw_0^{*})\mathbf{s}_1  \\  \ \ \ \  +\sqrt{x_a}\left\{ \mu_{j,2}(\mathbf{s}_1\cos\theta_1+\mathbf{c}_1\sin \theta_1)-\mu_{j,1}w_0^{*} (\mathbf{s}_1\cos\theta_1-\mathbf{c}_1\sin\theta_1)\right\}.
\end{array}
\end{equation}
We introduce a book--keeping notation for the coefficient $t_j$ of the variable $\mathbf{x}_j$ in square brackets, within a linear expression $\sum_{i}t_i \mathbf{x}_i $ in parentheses: $ [ \mathbf{x}_j ]\left( \sum_{i}t_i \mathbf{x}_i \right) =t_j.$ Our method for $e_j$ is to asymptotically expand 
$[\mathbf{s}_1](e_j)$ and $[\mathbf{c}_1 \sin \theta_1](e_j) $ by \eqref{E:d1d2reduced}. We will treat $\sin \theta_1$ separately from the asymptotic expansions of the other terms due to the convenient fact that, by \eqref{E:asymptheta}, we have $\sin\theta_1 =\theta_1+O(N^{-3})$, and this will suffice for our purposes. Note that by  \eqref{E:asymptheta} and \eqref{E:s1c1}, and $f\sim \eta N$, $\mathbf{s}_1$ and $\mathbf{c}_1$ are both $O(1)$. Further, by direct calculation, $\mu_{i,j}$ are polynomial, and $q_0^{*}(a)$ and $w_0^{*}(a)$ only involve negative powers of $\tau_a$, where $\tau_a[\mathbf{1}]=1$. Thus the Taylor expansions of $[\mathbf{s}_1](e_j)$ and  $[\mathbf{c}_1\sin\theta_1](e_j)$ about $t=0$ are well behaved.
\par
We next find reduced expressions for the terms $q_{N-f}^{*}(b)$ and $w_{N-f}^{*}(b)$ of \eqref{E:star}.  The approach is as above, but now with $b$ in place of $a$, $N-f$ in place of $f$, and using the second substitution $\theta_2$ in \eqref{E:thetaab}.  Similar to \eqref{E:Lambda1} we introduce 
\begin{equation}\label{E:Lambda2} 
\begin{array}{c}
q^{*}:=\frac{q_{N-f}^{*}(b)}{\Lambda_2}, \ w^{*}:=\frac{w_{N-f}^{*}(b)}{\Lambda_2}; \   \Lambda_2:=2i\alpha_{b}^{-1}\left(bz\tau_b\right)^{N-f}=(\sin \theta_2)^{-1}\left(bz\tau_b\right)^{N-f-1}.
\end{array}
\end{equation}
Similar as for \eqref{E:wstarf}, by \eqref{E:qwclosed} and both lines of \eqref{E:qwstar1} applied in turn, and  \eqref{E:Lambda2}, 
\begin{equation}\label{E:qwstarNminusf}
\begin{array}{c}
q^{*}:=y^2\sin (N-f)\theta_2 -\sqrt{x_b}q_0^{*}(b)\sin (N-f-1)\theta_2 ,\\ 
w^{*}:=\sin (N-f)\theta_2 -\sqrt{x_b}w_0^{*}(b)\sin (N-f-1)\theta_2 .
\end{array}
\end{equation}
Introduce abbreviations also for the second stratum sines and cosines:
\begin{equation}\label{E:s2c2} 
\begin{array}{c}
\mathbf{s}_2:=\sin (N-f)\theta_2 ; \ \ \mathbf{c}_2:=\cos(N-f)\theta_2.
\end{array}
\end{equation}
We illustrate the book--keeping method by expanding  $\sin (N-f-1)\theta_2=\mathbf{s}_2\cos\theta_2-\mathbf{c}_2\sin\theta_2$ to obtain by \eqref{E:qwstarNminusf},
$$\begin{array}{c}
[\mathbf{s}_2]\left(q^{*}\right) =y^2-\sqrt{x_b}q_0^{*}(b)\cos\theta_2; \ \ [ \mathbf{c}_2 \sin\theta_2 ]\left(q^{*}\right)=\sqrt{x_b}q_0^{*}(b); \end{array}$$
$$\begin{array}{c} [\mathbf{s}_2]\left(w^{*}\right)=1-\sqrt{x_b}w_0^{*}(b)\cos\theta_2; \ \ 
 [\mathbf{c}_2\sin\theta_2]\left(w^{*}\right)=\sqrt{x_b}w_0^{*}(b). 
\end{array}$$
To handle the asymptotic expansions for the four terms on the right side of \eqref{E:star}, we expand the coefficients of $\mathbf{s}_1$, $\mathbf{c}_1\sin \theta_1$, $\mathbf{s}_2$, and $\mathbf{c}_2\sin \theta_2$ by direct computation and thereby find  
\begin{equation}\label{E:w0Nformulafinal}
\begin{array}{c}
\frac{\overline{w}_{0,N}}{\Lambda_1\Lambda_2}=O(N^{-2})+\left [\left(-(1-a)(1-b)+2(1-ab)\frac{it}{N}\right )\mathbf{s}_1\right ] \left[\left (1+\frac{2}{1-a}\frac{it}{N} \right ) \mathbf{s}_2\right ]+ \\
\left[\left (1-a-\frac{a(b-a)}{1-b}\frac{it}{N}\right )\mathbf{s}_1+ a \mathbf{c}_1\sin\theta_1\right]\left[\left(1-b-\frac{b(2-a-b)}{1-a}\frac{it}{N}\right)\mathbf{s}_2+b\mathbf{c}_2\sin\theta_2\right].
\end{array}
\end{equation}
Since, by \eqref{E:asymptheta}, $\sin\theta_1$ and $\sin\theta_2$ are of order $1/N$, observe that the two terms of order 1 on the right hand side of  \eqref{E:w0Nformulafinal}  are of form $\pm(1-a)(1-b)$ and therefore cancel. Also, since $\sin\theta_j=\theta_j+O(N^{-3})$ for $\theta_j$ are given by \eqref{E:asymptheta}, we substitute these  relations into \eqref{E:w0Nformulafinal} and collect the order $1/N$ terms to find by direct asymptotics that:
\begin{equation}\label{E:w0Nformulafinal1}
\frac{\overline{w}_{0,N}}{\Lambda_1\Lambda_2}=\left \{a\sigma_1\mathbf{c}_1\mathbf{s}_2+b\sigma_2\mathbf{s}_1\mathbf{c}_2+(b-a)^2 \mathbf{s}_1\mathbf{s}_2\right \}\frac{it}{N}+O(N^{-2}).
\end{equation}
To render a partial check on the book--keeping procedure for \eqref{E:w0Nformulafinal1},  
write out a formula for $e_j=e_j(a, b, r_N, y_N, z_N, \mathbf{s}_1, 
   \mathbf{c}_1 \sin \theta_1)$ of \eqref{E:d1d2reduced} by leaving $\sin \theta_1$ as an auxiliary variable. Then $\sin\theta_j$ is replaced by the order $1/N$ term of \eqref{E:asymptheta}, whereas $\cos \theta_j$ is defined exactly by \eqref{E:thetaab}. So, expand $e_1q^{*}+e_2w^{*}$ as 
    $$\begin{array}{c}e_1\left(
     [\mathbf{s}_2](q^{*})\mathbf{s}_2+ 
    [\mathbf{c}_2 \sin\theta_2](q^{*}) \mathbf{c}_2 \sin\theta_2 \right)+e_2\left(
     [\mathbf{s}_2](w^{*})\mathbf{s}_2+ 
    [\mathbf{c}_2 \sin\theta_2](w^{*}) \mathbf{c}_2 \sin\theta_2 \right),\end{array}$$ 
and apply a Taylor series about  $t=0$  to recover \eqref{E:w0Nformulafinal1}; see \cite{Mor2018}.
 \par
Now plug \eqref{E:w0Nformulafinal1} into the formula for $g_{0,N}$ in Proposition \ref{P:gProp}.\emph{I.1}, apply 
Proposition \ref{P:Pi}.\emph{I.1} to rewrite $\Pi_{0,N}$, and recall  $\Lambda_j$ in \eqref{E:Lambda1} and \eqref{E:Lambda2}. So
$$g_{0,N}=\frac{\omega_a \tau(a,b) r z^2}{a(2-a)\tau_a}\left(\frac{ \sin \theta_1\sin \theta_2[(N-f)a+fb-(N-1)ab]}{\left [ a\sigma_1\mathbf{c}_1\mathbf{s}_2+b\sigma_2\mathbf{s}_1\mathbf{c}_2+(b-a)^2 \mathbf{s}_1\mathbf{s}_2\right ] \frac{it}{N}+O(N^{-2})}\right ).$$
Finally, to find the limit as $N\to \infty$ of this last expression,  we substitute \eqref{E:asymptheta} into the definitions \eqref{E:s1c1} and \eqref{E:s2c2}, and again employ $\sin \theta_j \sim \theta_j $. We note: $\lim_{N\to\infty}\omega_a [a(2-a)]^{-1}r_N z_N^2\tau(a,b)\tau_a^{-1}=1$, since $\omega_a[\mathbf{1}]=a(2-a)$ and $\tau(a,b)[\mathbf{1}]=1$. Since by assumption $f\sim\eta N$, we have $[(N-f)a+fb-(N-1)ab]\sim N[(1-\eta)a+\eta b-ab]$, and since by \eqref{E:asymptheta}, $\theta_1\theta_2\sim i^2 \frac{\sigma_1\sigma_2}{(1-a)(1-b)} t^2N^{-2} $, we obtain, as $N\to \infty,$
$$
g_{0,N}\sim \frac{i^2t^2}{N} \frac{\sigma_1\sigma_2}{(1-a)(1-b)}\frac{(1-\eta)a+\eta b-ab }{
 \left [ a\sigma_1\mathbf{c}_1\mathbf{s}_2+b\sigma_2\mathbf{s}_1\mathbf{c}_2+(b-a)^2 \mathbf{s}_1\mathbf{s}_2\right ] \frac{it}{N}}.
$$
Here we use implicitly that  $\sin (i x)=i\sinh (x)$ and $\cos(i x)=\cosh(x)$, so that by \eqref{E:asymptheta}, \eqref{E:s1c1} and \eqref{E:s2c2}, and by definition of $\kappa_1$ and $\kappa_2$, $\mathbf{s}_j\sim i \sinh(\kappa_j t)$, $j=1,2$,   and $\mathbf{c}_j\sim \cosh(\kappa_j t)$, $j=1,2$. Thus  
we obtain, $\lim\limits_{N\to\infty}g_{0,N}[r_,s_N,t_N]= \hat{\varphi}(t) $, 
for $\hat{\varphi}(t)$ given by \eqref{E:T2}.  
\qed 
\end{proof}
\begin{proof}[Corollary \ref{C:homogeneouslimit}] We now assume that $a=b$ and consider the random variable $sY_{1,N}+tY_{2,N}$ defined by \eqref{E:Delta1Delta2} in place of $tX_N$ in the proof of Theorem \ref{T:T2}. By the definition \eqref{E:Delta1Delta2}  we write
$sY_{1,N}+tY_{2,N}=\frac{1}{N}\left ({t\cal L}'_N+\frac{(1-a)s-(2-a)t}{(1-a)}{\cal R}'_N+\frac{t-s}{(1-a)}{\cal V}'_N\right ).$
Accordingly, define
\begin{equation}\label{E:rhN}
r_{s,t,N}:=e^{i((1-a)s-(2-a)t)/((1-a)N)},  \ y_{s,t,N}:=e^{i(t-s)/((1-a)N)}, \ z_{s,t,N}:=e^{it/N}.
\end{equation}
It suffices to prove that, for each fixed pair of real numbers $s,t\in \mathbb{R}$,  $\lim_{N\to\infty} g_{0,N}(a,a)[r_{s,t,N},y_{s,t,N},z_{s,t,N}]$ exists and is given by the right side of \eqref{E:Jointchfcn}. 
Define $\theta=\theta_{s,t,N} $ via $\cos\theta=\beta_a/\sqrt{4x_a}$, where the functions $\beta_a$ and $x_a$  are composed with the complex exponential terms in \eqref{E:rhN}. It follows by making a direct calculation that 
\begin{equation}\label{E:costhetahN}
\begin{array}{c}
 \cos \theta=1+\frac{1}{2}\frac{(1-a)s^2+at^2}{N^2}+O\left(\frac{1}{N^3}\right); \  \theta = i \frac{\sqrt{(1-a)s^2+at^2}}{N} +O\left(\frac{1}{N^3}\right).
 \end{array}
\end{equation}
Since the model is homogeneous, we need only apply the first line of \eqref{E:wstarf} with $f:=N$ to obtain
\begin{equation}\label{E:wstarhN}
w_N^{*}(a)=(\sqrt{x_a}\sin\theta)^{-1}[az\tau_a]^N\left\{\sin N\theta-\sqrt{x_a}w_0^{*}(a)\sin (N-1)\theta \right \}.
\end{equation}
Expand $\sin (N-1)\theta=\mathbf{s}\cos \theta-\mathbf{c}\sin \theta$, for $\mathbf{s}:=\sin N\theta$ and $\mathbf{c}:=\cos N\theta$. Put $\Lambda:=(\sin\theta)^{-1}\left(az\tau_a\right)^{N-1}$. After direct calculation we find $1-\sqrt{x_a}w_0^{*}(a)=1-a +O(N^{-1})$. Therefore by \eqref{E:wstarhN} we have 
$$\begin{array}{c}
\frac{w_N^{*}(a)}{\Lambda}=\mathbf{s} -\sqrt{x_a}w_0^{*}(a)(\mathbf{s}\cos\theta-\mathbf{c} \sin\theta)=(1-a)\mathbf{s}+O\left(\frac{1}{N}\right) 
\end{array}.$$
Note that there is no cancellation of the order 1 term in this expression.
Now plug $\frac{w_N^{*}(a)}{\Lambda}$ into \eqref{E:ghomogeneous} to obtain 
$$g_{0,N}=\frac{\omega_a}{a(2-a)}r z\tau_a^{-1}\frac{(N-(N-1)a) \sin\theta} {(1-a)\mathbf{s} + O(N^{-1})}.$$
Finally apply the asymptotic expression for $\theta$ in \eqref{E:costhetahN} and let $N\to \infty$.
\qed  \end{proof}
\begin{proof}[Theorem \ref{T:T3}]
 By the same reasoning given at the outset of the proof of Theorem \ref{T:T2}, we may assume that $\mathbf{X}_0=0$. 
By the fact that the absolute value process starts afresh at the end of each excursion, we have that $1+{\cal M}_N$ is a standard geometric random variable with success probability $P(\mathbf{H}\ge N)$. Thus
\begin{equation}\label{E:PM}
\begin{array}{c}
P({\cal M}_N=\nu)=\left [P(\mathbf{H}< N) \right ]^{\nu}P(\mathbf{H}\ge N), \mbox{ \ } \nu=0,1,2,\dots.
\end{array}
\end{equation}
Let $\mathbf{L}_N$, $\mathbf{R}_N$, and $\mathbf{V}_N$, respectively,  be random variables for the number of steps, runs, and short runs, in an excursion, given that the height of the excursion is at most $N-1$.  
Therefore, in distribution, we may write: 
$$\begin{array}{c}
{\cal R}_N=\sum_{\nu=0}^{{\cal M}_N}\mathbf{R}^{(\nu)}, \ \   {\cal V}_N=\sum_{\nu=0}^{{\cal M}_N}\mathbf{V}^{(\nu)},  \ \ 
{\cal L}_N=\sum_{\nu=0}^{{\cal M}_N}\mathbf{L}^{(\nu)},
\end{array}$$
where  $\mathbf{R}^{(1)},\mathbf{R}^{(2)},\dots$; $\mathbf{V}^{(1)},\mathbf{V}^{(2)},\dots$; and $\mathbf{L}^{(1)},\mathbf{L}^{(2)},\dots$, respectively, are sequences of independent copies of $\mathbf{R}_{N}$, $\mathbf{V}_{N}$, and $\mathbf{L}_{N}$. Since the random variables $\mathbf{R}_{N}$, $\mathbf{V}_{N}$, and $\mathbf{L}_{N}$ already have built into their definitions the condition $\{\mathbf{H}\le N-1\}$, the probability generating function $K_{N-1}=E\{r^{\mathbf{R}_{N}}y^{\mathbf{V}_{N}}z^{\mathbf{L}_{N}}\}$ is calculated by Theorem \ref{T:KN}. Thus by \eqref{E:PM}, and by calculating a geometric sum there holds:
\begin{equation}\label{E:LastVisitGenFcn}
\begin{array}{c}
E\{r^{{\cal R}_N}y^{{\cal V}_N}z^{{\cal L}_N}u^{{\cal M}_N}\} = \sum\limits_{\nu=0}^{\infty}P({\cal M}_N=\nu)\left(uK_{N-1}\right )^{\nu} = \frac{P(\mathbf{H}\ge N)}{1-uP(\mathbf{H}< N)K_{N-1}[r,y,z]}.
\end{array}
\end{equation}
We define $(r_N,y_N,z_N)$ by \eqref{E:charfcnX}, and also set $u_N:=e^{-it a(b-a)/[(1-a)(1-b)N]}$.  By \eqref{E:lastvisitXab}, it suffices to show that $\lim_{N\to\infty}E\{e^{it(1+1/N){\cal X}_{N+1}}\}=\hat{\psi}(t)/\hat{\varphi}(t)$; see \eqref{E:jointcharacfcncalX}. We define $\theta_1$ and $\theta_2$ by \eqref{E:thetaab},  so that also \eqref{E:asympcostheta}--\eqref{E:asymptheta}  hold.  By the statement of Theorem \ref{T:KN} we must replace the calculation of $\overline{w}_{0,N}$, starting with \eqref{E:star}, with instead  $\overline{w}_{1,N+1}$. 
However, by \eqref{E:M}, \eqref{E:wn0formula}, and \eqref{E:star}, the difference in the two calculations is simply accounted for by replacing $f$ by $f-1$ in the calculation of  $\overline{w}_{0,N}$, because $j$ in  \eqref{E:wn0formula} for $\overline{w}_{1,N+1}$ is determined by $j=N+1-f=N-(f-1)$, so $\frac{1}{\Lambda_2}\overline{w}_{1,N+1}=d_1(f')q^{*} +d_2(f')w^{*}$ with $f':=f-1$ in place of $f$ in both \eqref{E:M} and \eqref{E:Lambda2}. This is reflected by the fact that, by Lemma \ref{L:wbarID}, $\overline{w}_{1,N+1}=\overline{w}_{N,0}$. 
We must now also calculate $\overline{q}_N=d_{q,1}(f)q_{N-f+1}^{*}(b)+d_{q,2}(f)w_{N-f+1}^{*}(b)$ given by Lemma \ref{L:Qformula}, with 
$d_{q,j}(f)=\mu_{j,1}q_{f-1}^{*}(a)+\mu_{j,2}q_{f}^{*}(a)$, $j=1,2$,  
defined by  \eqref{E:Qprime} in the proof of Lemma \ref{L:Qformula}. In summary, $f'=f-1$ yields
$(\dagger)\ \ \overline{q}_{N}=d_{q,1}(f'+1)q_{N-f'}^{*}(b)+d_{q,2}(f'+1)w_{N-f'}^{*}(b)$.
Thus, because we simply replace $f$ by $f-1$ in the required substitutions, and since $f\sim \eta N$, we will not change the name of $f$. With this understanding, we may use the calculation of $\overline{w}_{0,N}$ in \eqref{E:star}--\eqref{E:w0Nformulafinal1} verbatim in place of the calculation of $\overline{w}_{1,N+1}$, and we will do this without changing the names of $e_j$, $q^{*}$, $w^{*}$ and $\Lambda_j$; see \eqref{E:Lambda1} and \eqref{E:Lambda2}. Further with this understanding, by $(\dagger)$, with $f$ now recouping the role of $f'$, and with $q^{*}$ and $w^{*}$ defined by \eqref{E:Lambda2}, we have $\frac{1}{\Lambda_2}\overline{q}_{N}=d_{q,1}q^{*}+d_{q,2}w^{*}$ for
\begin{equation}\label{E:dq}
d_{q,j}:=\mu_{j,1}q_{f}^{*}(a)+\mu_{j,2}q_{f+1}^{*}(a). 
\end{equation}
Here by \eqref{E:qwclosed}, \eqref{E:qwstar1} and \eqref{E:thetaab}, in analogy with \eqref{E:wstarf}, we have
$$\begin{array}{c}
q_{f}^{*}(a)=2i\alpha_a^{-1}\left(az\tau_a\right)^{f}\left\{y^2\sin f\theta_1 -\sqrt{x_a}q_0^{*}(a)\sin (f-1)\theta_1 \right \}\\ 
q_{f+1}^{*}(a)=2i\alpha_a^{-1}\left(az\tau_a\right)^{f}\sqrt{x_a}\left\{y^2\sin (f+1)\theta_1 -\sqrt{x_a}q_0^{*}(a)\sin f \theta_1 \right \}.
\end{array}$$
Denote $e_{q,j}:=d_{q,j}/\Lambda_1$. Therefore, by \eqref{E:dq}, the definition of $\Lambda_1$ in \eqref{E:Lambda1}, and these equations for $q_{f}^{*}(a)$ and $q_{f+1}^{*}(a)$, 
\begin{equation}\label{E:dq1dq2}
\begin{array}{l}
e_{q,j}=\left(y^2\mu_{j,1}-\mu_{j,2}x q_0^{*}\right)\sin f\theta_1 +\sqrt{x}\left\{ y^2\mu_{j,2}\sin (f+1)\theta_1-\mu_{j,1}q_0^{*} \sin (f-1)\theta_1\right\},
\end{array}
\end{equation}
where $x=x_a$ and $q_0^{*}=q_0^{*}(a)$. Rewrite  \eqref{E:dq1dq2} by applying the notations \eqref{E:s1c1}. Thus $e_{q,j}$ is written, with dependence on $a$ suppressed, by
\begin{equation}\label{E:dq1dq2reduced} 
\begin{array}{l}
 (y^2\mu_{j,1} - \mu_{j,2}xq_0^{*})\mathbf{s}_1  +\sqrt{x}\{ y^2\mu_{j,2}(\mathbf{s}_1\cos\theta_1+\mathbf{c}_1\sin \theta_1)-\mu_{j,1}q_0^{*} (\mathbf{s}_1\cos\theta_1-\mathbf{c}_1\sin\theta_1)\}
\end{array}
\end{equation}
In summary, by \eqref{E:dq}, we have $\overline{q}_{N}/(\Lambda_1\Lambda_2)=e_{q,1}q^{*}+e_{q,2}w^{*},$ for $e_{q,j}$ in \eqref{E:dq1dq2reduced}, and $\Lambda_j$ defined by  \eqref{E:Lambda1} and \eqref{E:Lambda2}.
\par
To guide the asymptotic expansions of \eqref{E:dq1dq2reduced}  we rewrite \eqref{E:LastVisitGenFcn}
by substituting the last line of the proof of Theorem \ref{T:KN}: 
 \begin{equation}\label{E:jointcharacfcncalX}
E\{e^{it(1+1/N){\cal X}_{N+1}}\}
=\frac{P(\mathbf{H}\ge N+1)\overline{w}_{1,N+1}}{\overline{w}_{1,N+1}-(1-a)u_Nr_N^{2} z_N^{2} \overline{q}_{N}}.
\end{equation}
It turns out 
that there is a cancellation in the order of the denominator of \eqref{E:jointcharacfcncalX}. That is, the leading order of each of $\overline{w}_{1,N+1}/(\Lambda_1\Lambda_2)$ and $\overline{q}_{N}/(\Lambda_1\Lambda_2) $ will be some order  $1$ trigonometric factor times $it/N$; in fact there holds $(1-a)\overline{q}_{N}/\overline{w}_{1,N+1} \sim 1$, as $N\to\infty$.  Define 
\begin{equation}\label{E:DELTA}
\Delta_N:=\overline{w}_{1,N+1}-(1-a)u_Nr_N^{2} z_N^{2} \overline{q}_{N}. 
\end{equation}
By direct calculation we will establish that $\Delta_N/(\Lambda_1\Lambda_2)=O(N^{-2})$, and we find the exact coefficient of the order $N^{-2}$ term. 
\par 
For the asymptotics of \eqref{E:dq1dq2reduced} 
we may still treat  $\sin\theta_1 =\theta_1+O(N^{-3})$ by \eqref{E:asymptheta}, but must render precisely the $O(N^{-2})$ term in $\cos\theta_1=1+O(N^{-2})$ of \eqref{E:asympcostheta}. In an appendix to \cite{Mor2018}, we display the many terms of the book--keeping method for this problem. For the the present, we simply exhibit the asymptotics of \eqref{E:DELTA} obtained by  machine computation with $\sin\theta_j$ substituted by the corresponding order $1/N$ term of \eqref{E:asymptheta}:
\begin{equation}\label{E:qNminusfasymp1update}
\begin{array}{l}
\frac{\Delta_N}{\Lambda_1\Lambda_2}=\frac{1}{(1-a)(1-b)}\frac{t^2}{N^{2}} \{ -a b \sigma_1\sigma_2\mathbf{c}_1\mathbf{c}_2-a\sigma_1(a-b)^2\mathbf{c}_1\mathbf{s}_2+a^2\sigma_1^2\mathbf{s}_1\mathbf{s}_2 \} +O\left(\frac{1}{N^3}\right).
\end{array}
\end{equation} 
\par
Finally we compute the limit of the ratio  \eqref{E:jointcharacfcncalX} by the asymptotic relations \eqref{E:thetaab}, and by \eqref{E:w0Nformulafinal1} and \eqref{E:qNminusfasymp1update}. Thus, because by \eqref{E:Hhomogeneous} and Proposition \ref{P:Pi} we have that $P(\mathbf{H}\ge N+1)\sim C_{a,b}N^{-1}$ for $C_{a,b}=ab/[(1-\eta)a+\eta b-ab]$, we find $E\{e^{it(1+1/N){\cal X}_{N+1}}\}$ is asymptotic to
 $$\begin{array}{l}
C_{a,b}N^{-1} \left \{[a\sigma_1\mathbf{c}_1\mathbf{s}_2+b\sigma_2\mathbf{s}_1\mathbf{c}_2+(b-a)^2 \mathbf{s}_1\mathbf{s}_2]\frac{it}{N}+O\left(\frac{1}{N^2}\right)\right \}\\  / \left\{ \frac{1}{(1-a)(1-b)}[-a b \sigma_1\sigma_2\mathbf{c}_1\mathbf{c}_2-a\sigma_1(a-b)^2\mathbf{c}_1\mathbf{s}_2+a^2\sigma_1^2\mathbf{s}_1\mathbf{s}_2 ]\frac{t^2}{N^2} +O\left(\frac{1}{N^3}\right)\right \}.
\end{array}$$
As in the proof of Theorem \ref{T:T2} we have $\mathbf{c}_j\sim \cosh(\kappa_j t)$, and $\mathbf{s}_j\sim i \sinh(\kappa_j t)$, $j=1,2$. Therefore, with  $\tilde{C}_{a,b}:=(1-a)(1-b)C_{a,b}$, we obtain that $E\{e^{it(1+1/N){\cal X}_{N+1}}\}$ has the following limit as $N\to \infty$, where we refer to \eqref{E:T2} and statement of Theorem \ref{T:T3} for the definitions of $\hat{\varphi}(t)$ and $\hat{\psi}(t)$: 
$$\lim_{N\to\infty}E\{e^{it(1+1/N){\cal X}_N}\}=\frac{\tilde{C}_{a,b}}{t}\times \frac{(b\kappa_1\sigma_2+a\kappa_2\sigma_1)t}{\hat{\varphi}(t)}\times \frac{\hat{\psi}(t)}{ab\sigma_1\sigma_2}.$$
We have  $\tilde{C}_{a,b}=ab\sigma_1\sigma_2/(a\sigma_1\kappa_2+b\sigma_2\kappa_1)$, so 
the proof is complete.
\qed 
\end{proof}
\begin{corollary}\label{C:lastvisitlaw}
Assume $a=b$. Define \\
$\begin{array}{l}
Z_{1}=\frac{1}{N}\left({\cal R}_N-\frac{1}{(1-a)}{\cal V}_N+a{\cal M}_N\right); 
Z_{2}=\frac{1}{N}\left({\cal L}_N-\frac{1}{(1-a)}{\cal R}_N+\frac{a}{1-a}{\cal M}_N\right) - Z_{1}.
\end{array}$
Then,
$\lim_{N\to\infty}E\{e^{i(sZ_{1}+tZ_{2})}\}= \frac{\tanh(\sqrt{(1-a)s^2+at^2})}{ \sqrt{(1-a)s^2+at^2}}.$
\end{corollary}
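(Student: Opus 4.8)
The plan is to run the argument of Corollary~\ref{C:homogeneouslimit}, but starting from the last--visit generating function \eqref{E:LastVisitGenFcn} instead of the meander generating function \eqref{E:meandergf}. Assume $a=b$ and fix real $s,t$. As at the outset of the proofs of Theorems~\ref{T:T2} and \ref{T:T3} we may take $\mathbf{X}_0=0$, and since $\{(1+1/N)(Z_1,Z_2)\}$ converges in law if and only if $\{(Z_1,Z_2)\}$ does, it suffices to compute $\lim_{N\to\infty}E\{e^{i(1+1/N)(sZ_1+tZ_2)}\}$ for the $(N+1)$--indexed variables. Expanding the definitions of $Z_1,Z_2$ gives
\[(1+1/N)(sZ_1+tZ_2)=\tfrac1N\Bigl(t{\cal L}_{N+1}+\tfrac{(1-a)s-(2-a)t}{1-a}{\cal R}_{N+1}+\tfrac{t-s}{1-a}{\cal V}_{N+1}+\tfrac{a((1-a)s+at)}{1-a}{\cal M}_{N+1}\Bigr),\]
so the matching substitution is the triple $r_N,y_N,z_N$ of \eqref{E:rhN} together with $u_N:=e^{ia((1-a)s+at)/((1-a)N)}$. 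Applying \eqref{E:LastVisitGenFcn} with $N$ replaced by $N+1$, Proposition~\ref{P:KN}, and the identity $P(\mathbf{H}\le N)K_N(a)=(1-a)r^2z^2q_N^*(a)/w_N^*(a)$ from that proof, I reduce to
\[E\{e^{i(1+1/N)(sZ_1+tZ_2)}\}=\frac{P(\mathbf{H}\ge N+1)\,w_N^*(a)}{\Delta_N},\qquad \Delta_N:=w_N^*(a)-u_N(1-a)r_N^2z_N^2q_N^*(a),\]
all functions evaluated at $[r_N,y_N,z_N]$.

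Next I would reuse the substitution $\cos\theta:=\beta_a/\sqrt{4x_a}$, $\beta_a\pm\alpha_a=\sqrt{4x_a}\,e^{\pm i\theta}$, composed with $[r_N,y_N,z_N]$. Since this triple is exactly that of \eqref{E:rhN}, the expansion \eqref{E:costhetahN} holds verbatim, so $\theta=iA/N+O(N^{-3})$ with $A:=\sqrt{(1-a)s^2+at^2}$. Writing $\Lambda:=(\sin\theta)^{-1}(az\tau_a)^{N-1}$ and $\mathbf{s}:=\sin N\theta$, formula \eqref{E:wstarhN} gives $w_N^*(a)/\Lambda=\mathbf{s}-\sqrt{x_a}\,w_0^*(a)\sin(N-1)\theta$, and the same computation with the first line of \eqref{E:qwstar1} gives $q_N^*(a)/\Lambda=y^2\mathbf{s}-\sqrt{x_a}\,q_0^*(a)\sin(N-1)\theta$. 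Hence
\[\frac{\Delta_N}{\Lambda}=\mathbf{s}\bigl(1-u_N(1-a)r_N^2z_N^2y_N^2\bigr)-\sin(N-1)\theta\cdot\sqrt{x_a}\bigl(w_0^*(a)-u_N(1-a)r_N^2z_N^2q_0^*(a)\bigr).\]

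The heart of the proof -- and the main obstacle -- is the asymptotics of this last display, in particular the bookkeeping of its $O(1/N)$ terms. The leading coefficients of $\mathbf{s}$ and of $\sin(N-1)\theta$ both equal $a$ (using $(1-a)y_N^2\to1-a$, $\sqrt{x_a}w_0^*(a)\to a$, and $q_0^*(a)[\mathbf{1}]=0$), so $\Delta_N/\Lambda=a\bigl(\mathbf{s}-\sin(N-1)\theta\bigr)+O(N^{-2})$ provided the $O(1/N)$ corrections to the two bracketed factors coincide. The decisive point -- the analogue of the vanishing of the first--order terms of $\cos\theta_j$ exploited in Theorem~\ref{T:T2} -- is that the coefficients defining $Z_1,Z_2$ are chosen so that these corrections do coincide: by direct calculation $1-u_N(1-a)r_N^2z_N^2y_N^2=a+\frac{ia((1+a)s-at)}{N}+O(N^{-2})$, while, using $w_0^*(a)[r_N,y_N,z_N]=1+\frac{2i(1-a)(t-s)}{N}+O(N^{-2})$, $\sqrt{x_a}[r_N,y_N,z_N]=a+\frac{ia(at+(1-a)s)}{N}+O(N^{-2})$ and $q_0^*(a)[r_N,y_N,z_N]=\frac{2i(t-s)}{(1-a)N}+O(N^{-2})$, one finds $\sqrt{x_a}(w_0^*(a)-u_N(1-a)r_N^2z_N^2q_0^*(a))=a+\frac{ia((1+a)s-at)}{N}+O(N^{-2})$ as well. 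Since $\mathbf{s}=i\sinh A+O(N^{-2})$ and $\sin(N-1)\theta=i\sinh A-\frac{iA\cosh A}{N}+O(N^{-2})$, the matching first--order pieces cancel against the common limit $i\sinh A$, leaving $\Delta_N/\Lambda=\frac{iaA\cosh A}{N}+O(N^{-2})$.

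To conclude, as in Corollary~\ref{C:homogeneouslimit} one has $w_N^*(a)/\Lambda=(1-a)\mathbf{s}+O(N^{-1})\to(1-a)i\sinh A$, and \eqref{E:Hhomogeneous} with Proposition~\ref{P:Pi} gives $P(\mathbf{H}\ge N+1)=\frac{a}{N-(N-1)a}\sim\frac{a}{(1-a)N}$; dividing,
\[\lim_{N\to\infty}E\{e^{i(1+1/N)(sZ_1+tZ_2)}\}=\frac{\bigl(a/((1-a)N)\bigr)(1-a)i\sinh A}{iaA\cosh A/N}=\frac{\sinh A}{A\cosh A}=\frac{\tanh A}{A},\]
which is the assertion. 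This computation is of the same nature as those in Theorems~\ref{T:T2} and \ref{T:T3} but decisively lighter, since only one stratum -- hence a single substitution $\theta$ -- intervenes; that is exactly why the limit depends only on $A^2=(1-a)s^2+at^2$ and not on $s$ and $t$ separately. The lone routine loose end is the continuity/shift argument passing from the $(1+1/N)$--scaled $(N+1)$--indexed variables back to $(Z_1,Z_2)$, identical to the one used for Theorem~\ref{T:T2}.
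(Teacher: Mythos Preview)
Your proof is correct and follows exactly the approach the paper indicates: it specializes the last--visit computation of Theorem~\ref{T:T3} to the homogeneous case $a=b$, where the two--stratum machinery collapses to the single substitution $\theta$ of Corollary~\ref{C:homogeneouslimit}. The paper itself gives no details beyond ``simplify the lines of proof of Theorem~\ref{T:T3}'' and defers the calculation to an external appendix, so you have in fact supplied the content that the paper omits; in particular your verification that the two bracketed coefficients in $\Delta_N/\Lambda$ agree to order $1/N$ (forcing the surviving contribution to come solely from $\mathbf{s}-\sin(N-1)\theta=\mathbf{c}\sin\theta+O(N^{-2})$) is the crux, and it checks out.
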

\begin{proof} One simplifies the lines of proof of Theorem \ref{T:T3}. We leave details in an appendix to \cite{Mor2018}. 
\qed 
\end{proof}
\begin{figure}[!]
  \centering
  \setlength{\unitlength}{\textwidth} 
    \begin{picture}(1,0.5)
       \put(-0.12,0){\includegraphics[width=14cm,height=6 cm]{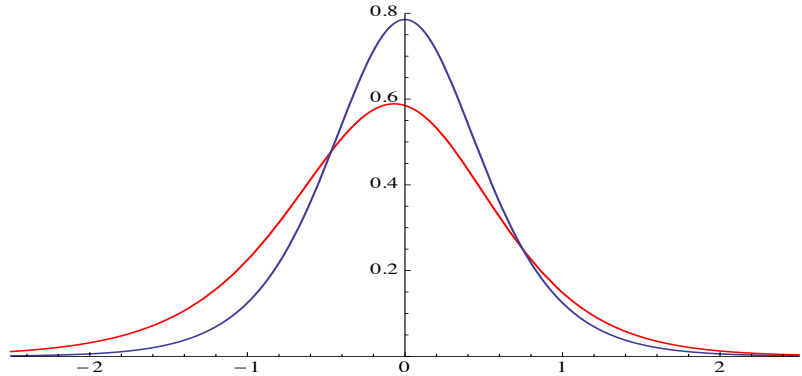}}
    \end{picture}
    \caption{The density $\varphi(x)$ whose transform $\hat{\varphi}(t)=\int_{-\infty}^{\infty}e^{itx}\varphi(x)\ dx$ is given  by \eqref{E:phihat} for $a=\frac{1}{4}$, and the density  $\frac{\pi}{4} \mbox{sech}^2(\pi x/2)$, that is instead determined by $a=\frac{1}{2}$ and corresponds to simple random walk. Numerically, the mean of $\varphi$ is  $\int_{\infty}^{\infty} x\varphi(x)\ dx=-\frac{1}{4}$, and $\mbox{arg} \max\limits_{x} \varphi(x)=-0.131619.$}
\label{F:density}
\end{figure}
\begin{acknowledgement} The author wishes to thank the referee who gave extensive suggestions that led to many improvements in the presentation. The companion document \cite{Mor2018} would not have come into the public domain without the referee's helpful (and exuberant!) insight.
\end{acknowledgement}
\bibliography{mybibfile}

\end{document}